\newtheorem{theorem}{Theorem}
\newtheorem{lemma}{Lemma}[section]
\newtheorem{corollary}[lemma]{Corollary}
\newtheorem{proposition}[lemma]{Proposition}
\newtheorem*{definition}{Definition}
\theoremstyle{definition}
\newtheorem*{remark}{Remark}
\newtheorem*{remarks}{Remarks}
\numberwithin{equation}{section}
\def\de{\overset{\mathrm{def}}{=}}
\def\mi\mathfrak{i}
\def\mm{\mathfrak{m}}
\def\one{{\mathbf{1}}}
\def\bpi{\boldsymbol{\pi}}
\def \Quad{{\quad\quad\quad\quad\quad\quad\quad\quad\quad\quad\quad
\quad\quad\quad\quad\quad\quad\quad\quad\quad\quad\quad\quad\quad
\quad\quad\quad\quad\quad\quad\quad\quad\quad\quad\quad\quad\quad
\quad\quad\quad}}
\def\eps{{\varepsilon}}
\def\teps{{\tilde\varepsilon}}
\def\supp{{\rm supp}}
\def\Card{{\rm Card}}
\def\Const{{\rm Const}}
\def\Corr{{\rm Corr}}
\def\Cov{{\rm Cov}}
\def\Leb{{\rm Leb}}
\def\Prob{{\mathbb{P}}}
\def\tProb{{\tilde{\Prob}}}
\def\Var{{\rm Var}}
\def\EXP{{\mathbb{E}}}
\def\tEXP{{\tilde\EXP}}
\def\bbT{\mathbb{T}}
\def\bbX{\mathbb{X}}
\def\IS{\mathbb{S}}
\def\reals{\mathbb{R}}
\def\integers{\mathbb{Z}}
\def\bE{\mathbf{E}}
\def\bH{\mathbf{H}}
\def\bH{\mathbf{H}}
\def\bP{\mathbf{P}}
\def\bc{\mathbf{c}}
\def\bm{\mathbf{m}}
\def\bu{\mathbf{u}}
\def\brC{{\bar C}}
\def\brbE{{\bar\bE}}
\def\brK{{\bar K}}
\def\brP{{\bar P}}
\def\brbP{{\bar\bP}}
\def\bra{{\bar a}}
\def\brra{{\bar\bra}}
\def\brc{{\bar c}}
\def\brf{{\bar f}}
\def\brg{{\bar g}}
\def\brh{{\bar h}}
\def\brn{{\bar n}}
\def\bru{{\bar{u}}}
\def\brs{{\bar s}}
\def\brt{{\bar t}}
\def\brv{{\bar v}}
\def\brDelta{{\bar \Delta}}
\def\breps{{\bar\eps}}
\def\brpi{{\bar \pi}}
\def\brgamma{{\bar \gamma}}
\def\brlambda{{\bar \lambda}}
\def\brnu{{\bar \nu}}
\def\brmu{{\bar \mu}}
\def\brtheta{{\bar \theta}}
\def\brPhi{{\bar \Phi}}
\def\brzeta{{\bar \zeta}}
\def\brOmega{{\bar\Omega}}
\def\cB{\mathcal{B}}
\def\cC{\mathcal{C}}
\def\cF{\mathcal{F}}
\def\cH{\mathcal{H}}
\def\cO{\mathcal{O}}
\def\cR{\mathcal{R}}
\def\fh{\mathfrak{h}}
\def\hH{{\hat H}}
\def\hP{{\hat P}}
\def\hR{{\hat R}}
\def\ha{{\hat a}}
\def\hc{{\hat c}}
\def\hf{{\hat f}}
\def\hGamma{{\hat\Gamma}}
\def\hnu{{\hat\nu}}
\def\tA{{\tilde A}}
\def\tC{{\tilde C}}
\def\tF{{\tilde F}}
\def\tH{{\tilde H}}
\def\tP{{\tilde P}}
\def\tT{{\tilde T}}
\def\tX{{\tilde X}}
\def\tY{{\tilde Y}}
\def\ta{{\tilde a}}
\def\tc{{\tilde c}}
\def\tf{{\tilde f}}
\def\tn{{\tilde n}}
\def\tp{{\tilde p}}
\def\ts{{\tilde s}}
\def\tu{{\tilde u}}
\def\ty{{\tilde y}}
\def\teps{{\tilde\varepsilon}}
\def\tmu{{\tilde\mu}}
\def\tnu{{\tilde\nu}}
\def\tpi{{\tilde\pi}}
\def\tTheta{{\tilde\Theta}}
\def\tomega{{\tilde{\omega}}}
\def\tPhi{{\tilde{\Phi}}}
\def\ttPhi{{\tilde{\tPhi}}}
\def\tOmega{{\tilde{\Omega}}}
\def\beq{\begin{equation}}
\def\eeq{\end{equation}}
\begin{document}
\title[Subdiffusive random walks on a strip]
{Limit theorems for random walks on a strip\\
in subdiffusive regime.}

\author{D. Dolgopyat$^{1}$ and I. Goldsheid$^{2}$}
\footnotetext[1]{Department of
Mathematics and Institute of Physical Science and Technology,\\
University  of Maryland, College Park, MD, 20742, USA}
\footnotetext[2]{School of Mathematical Sciences, Queen
Mary University of London, London \\
E1 4NS, Great Britain}

\begin{abstract}\noindent  We study the asymptotic behaviour of occupation times of a transient random walk in
quenched random environment on a strip in a  sub-diffusive regime. The asymptotic behaviour of hitting times,
which is a more traditional object of study, is the exactly same. As a particular case, we solve a long
standing problem of describing the asymptotic behaviour of a random walk with bounded jumps on a
one-dimensional lattice. Our technique results from the development of ideas
from our previous work \cite{DG} on the simple random walks in random environment
and those used in \cite{BG1, BG2, G1} for the study of random walks on a strip.

\smallskip\noindent \textbf{2000 Mathematics Subject
Classification:} primary 60K37, 60F05; secondary 60J05, 82C44.

\smallskip\noindent \textbf{Keywords and Phrases:} RWRE, random
walks on a strip, quenched random environments, occupation times,
sub-diffusive regime, products of random transformations, Lyapunov exponents.
\end{abstract}

\maketitle

%\footnotetext[1]{Department of
%Mathematics and Institute of Physical Science and Technology\\ University  of Maryland,
%College Park, MD, 20742, USA}
%\footnotetext[2]{School of Mathematical Sciences Queen
%Mary, University of London, London \\E1 4NS, Great Britain}

\section{Introduction}
The main goal of this work is to describe the asymptotic behaviour of a random walk (RW)
in a quenched random environment (RE) on a strip in a sub-diffusive regime. As a corollary
we obtain a solution to a long standing problem about the asymptotic behaviour of a RW
with bounded jumps in RE on a one-dimensional lattice.
These two models are natural generalizations of the one-dimensional RWRE with jumps
to the nearest neighbors - the so called simple RWRE (SRWRE). The techniques and ideas
used in this paper resulted from the development and combination of those used in \cite{DG},
where we studied the limiting behaviour of the SRWRE, and in
\cite{BG1, BG2,G1}, which studied of RWRE on a strip. Our main model is the RWRE on a strip
and the main quantitative characteristic of the walk that is the occupation time $T_N$ of a large box
(see \eqref{TN1} for exact definition). In \cite{DG} we also studied $T_N$, but on a strip the
approach we use is very different from the one used for SRWRE.
The important difference between SRWRE and other
models can be roughly explained by the fact that a transient simple walk has to visit every point on its
way to $\infty$, while on a strip it can miss any point with a positive probability.
Due to this fact, the expectations of the occupation times of the sites form a Markov process
in the 'simple' case but this is not true for a walk on a strip. In order to resolve these
difficulties, we have to use methods inspired by the theory of dynamical systems such
as products of random transformations, Lyapunov exponents, transfer operators combined with more probabilistic
techniques such as coupling, large deviations, Poisson processes etc. We believe that
the new point of view presented in this paper makes the proofs more transparent even in the classical SRWRE setting.

We now recall the exact definitions of all three models.

\textbf{Model 1.} In the simplest 1D case, a \textit{random environment} is a  sequence of independent
identically distributed (i.i.d.) random variables $\omega=\{p_n\}_{n\in\mathbb{Z}}$, where $p_n$ are
viewed as probabilities of jumps from $n$ to $n+1$. Given $\omega$ and $X_0=z$, one defines a Markov chain
$X_t,\ t=0,1,...,$ on $\mathbb{Z}$ with a transition kernel given by
\begin{equation}\label{simpleRW}
 \mathbb{P}_\omega(X_{t+1}=k+1|X_t=k)=p_k, \ \ \mathbb{P}_\omega(X_{t+1}=k-1|X_t=k)=1-p_k .
\end{equation}

\textbf{Model 2.} The RWRE on a strip $S\overset{\mathrm{def}}{=}\mathbb{Z} \times\{1,\ldots,m\}$
was introduced in \cite{BG1}
and will be the main object of our study. We say that the set
$L_n\overset{\mathrm{def}}{=}\{(n,j):\,1\le j\le m\}\subset S$ is the \textit{layer} $n$ of
the strip (or just layer $n$). The walker is allowed to jump from a site in $L_n$ only to a site
in $L_{n-1}$, $L_n$, or $L_{n+1}$. Let $X_t=(Z_t,Y_t)$ denote the coordinate of the walk at
time $t$, where $t=0,\,1,\,2,...,$ $Z_t\in \mathbb{Z}$, $1\le Y_t\le m$. An environment $\omega$ on
a strip is a sequence of triples of $m\times m$ matrices $\omega = \{(P_n, Q_n, R_n)\}_{n\in \mathbb{Z}}$
with non-negative matrix elements and such that
$P_n+Q_n+R_n$ is a stochastic matrix:
\begin{equation}\label{stch}
(P_n+Q_n+R_n)\mathbf{1}=\mathbf{1},
\end{equation}
where $\mathbf{1}$ is a vector whose all components are equal to 1.
The transition kernel of the walk is given by
\begin{equation}\label{RWstrip}
\mathbb{P}_\omega(X_{t+1}=z'|X_t=z)=%\cQ(z, z')=
\begin{cases}
P_n(i,j)& \text{ if } z=(n,i), z'= (n+1, j)), \\
Q_n(i,j)&\text{ if } z=(n,i), z'=(n-1, j)), \\
R_n(i,j)&\text{ if } z=(n,i), z'= (n, j))
\end{cases}
\end{equation}
The corresponding Markov chain is completely defined if we set $X(0)=z$.

Throughout the paper we suppose that the following conditions are satisfied:
\begin{equation} \label{EqC1}
\{(P_n, Q_n, R_n)\}_{n\in \mathbb{Z}}  \text{ is an i.i.d. sequence }
\end{equation}
\begin{equation}\label{EqC2*}
\begin{aligned}
&\text{There is an $\varepsilon>0$ such that $\mathbf{P}$-almost surely for all $i,\,j\in[1,m]$}\\
& \left\|  R_n\right\| < 1-\varepsilon,\ \ ((I-R_n)^{-1}P_n)(i,j)>\varepsilon, \ \ ((I-R_n)^{-1}Q_n)(i,j)>\varepsilon.
\end{aligned}
\end{equation}
\begin{remarks}
1. The matrices $P_n$, $Q_n$, and $R_n$ are comprised of probabilities
of jumps from sites in $L_n$ to sites in $L_{n+1}$, $L_{n-1}$, and $L_n$ respectively.
Condition \eqref{stch} is equivalent to `the nearest layer jumps only' property of the walk.

2. Note that $((I-R_{n})^{-1} P_{n})(i,j)$ and $((I-R_n)^{-1}Q_n)(i,j)$ are the probabilities
for a RW starting from $(n,i)$ to reach $(n+1,j)$ and, respectively, $(n-1,j)$ at its first exit from layer $n$.

3. We chose to work under conditions \eqref{EqC2*} in order to simplify the proofs.
In fact all main results can be proved under the following much milder conditions.
\begin{equation}\label{EqC3*}
\begin{aligned}
&\text{There is $\varepsilon>0$ and integer $l\ge 1$ such that $\mathbf{P}$-almost surely $\forall$ $i\in[1,m]$}\\
& \left\|  R_n^l\right\| < 1-\varepsilon,\ \ ((I-R_n)^{-1}P_n)(i,1)>\varepsilon, \ \ ((I-R_n)^{-1}Q_n)(i,1)>\varepsilon.\\
\end{aligned}
\end{equation}
\end{remarks}

Let us describe explicitly the probability spaces hidden behind the above definitions.
By $(\Omega,\mathcal{F},\mathbf{P})$ we denote the probability space describing random environments,
where $\Omega=\{\omega\}$ is the set of all environments,
$\cF$ is the natural sigma-algebra of subsets of $\Omega$
and $\bP$ is a probability measure on $(\Omega,\cF)$. The RWRE is specified by the choice of $\Omega$ and $\bP$. Next,
let $\mathfrak{X}_z=\{X(\cdot): X(0)=z\}$ be the space of all trajectories
of the walk starting from $z\in L_0$.
A \textit{quenched} (fixed) environment $\omega$
thus provides us with a conditional probability measure $\Prob_{\omega,z}$ on
$\mathfrak{X}_z$ with a naturally defined probability space
$(\mathfrak{X}_z, \mathcal{F}_{\mathfrak{X}_z},\Prob_{\omega,z})$. In turn, these two measures
generate a semi-direct product measure $\mathrm{P}_z:=\bP\ltimes\Prob_{\omega,z}$ which is the {\it annealed}
probability measure on $(\Omega\times\mathfrak{X}_z,\mathcal{F}\times \mathcal{F}_{\mathfrak{X}_z})$ .

The expectations with respect to $\Prob_{\omega,z}$, $\mathbf{P}$,
and $\mathrm{P}_z$ will be denoted by $\EXP_{\omega,z}$, $\mathbf{E}$, and $\mathrm{E}_z$ respectively.
%With a slight abuse of notations we may also use $\mathbf{E}$ instead of $\mathrm{E}$.

\begin{remark} The notations $\mathfrak{X}_z$, $\Prob_{\omega,z}$, $\mathrm{E}_z$
etc. emphasize the dependence of these objects
on the starting point $z$ of the walk. However, we often use the simplified version of these
notations such as $\Prob_{\omega}$, $\EXP_{\omega}$, etc. because the asymptotic behaviour
of the walk does not depend on $z$ and it is usually clear from the context what the
starting point of the walk is.
\end{remark}

\textbf{Model 3.} The random walk on $\mathbb{Z}$ with uniformly bounded jumps is
another natural extension of the nearest neighbour model. The random
environment ${\omega}\overset{\mathrm{def}}{=}\{p(x)=(p(x,k))_{-m\leq k\leq
m}\}_{x\in\mathbb{Z}},$ where $p(x)$ is a stationary in $x$ sequence of vectors with
$\sum_{k=-m}^{m}p(x,k)=1$ and $p(x,k)\ge 0$. For a given environment $\omega$ the
transition kernel of the walk is defined by
\begin{equation}\label{wbd}
\mathbb{P}_{\omega}\left(  X(t+1)=x+k\,|\,X(t)=x\right) = p(x,k),\quad x\in\mathbb{Z}%
\end{equation}
The following geometric construction transforms this walk into a walk on a strip.
Let us view $\mathbb{Z}$ as a subset of the $X$-axis in a two-dimensional plane.
Cut the $X$-axis into equal intervals of length $m$ so that each of
them contains exactly $m$ consecutive integer points. Turn each such
interval around its left most integer point anti-clockwise
by $\pi/2$. The image of $\mathbb{Z}$ obtained in this way is a
part of a strip with distances between layers equal to $m$.
Re-scaling the $X$-axis of the plane by $m^{-1}$ makes the
distance between these layers equal to one and the RW on $\mathbb{Z}$
transforms into a RW on a strip with jumps to nearest layers only. The
relevant formulae for matrices $P_n,\ Q_n,\ R_n$ can be found in
\cite{BG1}, where this construction was described in a more
formal way.

It is obvious that if $p(x),\ x\in \mathbb{Z}$, is an i.i.d sequence then the just defined triples
of matrices $(P_n,\ Q_n,\ R_n)$ are i.i.d.
It is also easy to see that \eqref{EqC2*} is satisfied if for some $\varepsilon>0$
\begin{equation}\label{C4}
\mathbf{P}\{ p(x,1)>\varepsilon\,\ p(x,-1)>\varepsilon,\ p(x,m)>\varepsilon,\ p(x,-m)>\varepsilon\} = 1.
\end{equation}
A much wider class of one-dimensional RW with bounded jumps is obtained if instead of
\eqref{C4} we suppose only that
\begin{equation}\label{C5}
\mathbf{P}\{ p(x,1)>\varepsilon,\ p(x,-1)>\varepsilon\} = 1.
\end{equation}
In this case \eqref{EqC2*} may not be satisfied but
\eqref{EqC3*} is satisfied.

\smallskip\textbf{Brief comments on the history of the subject.}
Two pioneering papers which initiated the development of the theory of RWRE were published
in 1975 by Solomon \cite{So} and Kesten, Kozlov, Spitser \cite{KKS}.
In \cite{So} the asymptotic properties of the SRWRE were discussed at the level of the
Law of Large Numbers and the surprising fact that for a wide class of parameters
the SRWRE would be escaping to $\infty$ at a zero speed was discovered.
In \cite{KKS} the \textit{limiting
distributions} of hitting times and of the position of $X$
were found in the \textit{annealed} setting. The extensions of the main results
from these papers to the RWRE on a strip are explained below in Theorems \ref{ThSpeed}, \ref{CLTsg2},
and \ref{ThAnn}.

In 1982, Sinai \cite{S} described the asymptotic behaviour of a recurrent SRWRE.
He discovered a phenomena which is now called the Sinai diffusion.

The methods used in \cite{So, KKS, S} rely heavily on the
\textit{jumps to the nearest neighbours only} property of the walk and
the limiting distributions described in \cite{KKS} were obtained for \textit{annealed} RWRE.
Therefore the following questions arose and were known essentially since 1975:
%(and some of them were explicitly asked later by Sinai in \cite{S}):

1. Can one describe the limiting behaviour of the \textit{quenched} RW at least in the case of the
SRWRE (model 1)?

2. What are the analogues of (a) $\mathrm{P}$-almost sure results from \cite{So}, (b) the  annealed
limiting statements from \cite{KKS} %and (c) the Sinai diffusion
for more general models, saysuch as model 3?

3. What can be said about more general classes of environments, say stationary environments
with appropriate mixing properties?

In the 1982 paper Sinai explicitly stated the questions about the possibility to extend his
results to more general models, such as model 3.

\smallskip The attempts to find answers to question 1 are relatively recent. We shall not discuss them
here in any detail. The references concerned with SRWRE along with relevant discussion can be found
in \cite{G} and \cite{DG}.

Partial answers to question 2 were obtained in \cite{Br, Br1, Der, Ke, L, Let}.
The discussion of these results can be found in \cite{BG1, G1}.

Question 3 was addressed in several publications, see e.g. \cite{WRZ, BG1, G, G1, Z}.
And even though in \cite{DG} and in this work we consider the so called i.i.d environments
(as defined above) we believe that the methods we use are useful for the analysis
of RW in stationary RE satisfying appropriate mixing conditions.

Finally, let us mention several results on the RWRE on a strip
which are directly related to this work. The criterion for
recurrence and transience has been found in \cite{BG1}. A detailed
description of the limiting behaviour in the recurrent regime was given in \cite{BG2}.
A criterion for linear growth and the quenched (and hence annealed) Central Limit Theorem (CLT)
was obtained in \cite{G1} for wide classes of environments; in particular the
CLT for hitting times was established for stationary environments.
%We note that the results of the present work imply that the sufficient conditions
%for diffusive behaviour found in \cite{G1} are in fact also necessary.

\smallskip\noindent
\textbf{Quantities characterizing the asymptotic behaviour of a RWRE}

%\noindent\textbf
%\subsection{}

\smallskip\noindent Remember that $X_t=(Z_t,Y_t)$ is the coordinate of the walk at time $t$ with $Z_t$ being its
$\mathbb{Z}$ component. Denote by $\tT_N$ the hitting time
of layer $L_N$ -- the time at which the walk starting from a site in $L_0$ reaches
$L_N$ for the first time. It is both natural and in the tradition of the field to consider the
understanding of the main asymptotic properties of the walk as achieved
if the asymptotic behaviour of $Z_t$ as $t\to\infty$ and $\tT_N$ as $N\to\infty$ is known.

There is of course a strong connection between the asymptotic behaviour of $Z_{t}$
and $\tT_{N}$. Obviously $\tT_{N}$ is strictly monotone in $N$ and $Z_{\tT_{N}}=N$. This and
some other, less trivial relations between these random variables were used
in a very efficient way in the study of transient RWs already in \cite{So,KKS}.
In particular in \cite{KKS} the asymptotic distribution  of $Z_{t}$ was deduced
from that of $\tT_{N}$.

In our recent work \cite{DG} on SRWRE we studied a different
quantity as the main way of describing the asymptotic behaviour of the RW.
Namely, we considered the \textit{occupation time} $T_N$ of the interval $[0,N-1]$.
The asymptotic behaviour of $\tT_{N}$ is exactly the same as that of $T_N$ since
$|T_N-\tT_{N}|$ is a stochastically bounded random variable (see Lemma 2.1 from \cite{DG}).
%Yet despite such a strong relation between these two r.v.s, our methods don't work for
%$\tT_{N}$ but do for $T_{N}$ and seem to be very efficient.
In this paper, we study a similar quantity - the occupation time of a box
$[L_0,L_{N-1}]\overset{\mathrm{def}}{=}\{(n,i):\, 0\le n\le N-1\} $.
\begin{definition} The {\it occupation time} $T_N $ of the box $[L_0,L_{N-1}]$
is the total time the walk $X_t$ starting from a site in $L_0$ spends on this box
during its life time. In other words
\begin{equation}\label{TN1}
T_N=\#\{t\colon \ 0\le t< \infty,\ X_t\in [L_0,L_{N-1}]\},
\end{equation}
\end{definition}
\begin{remark} Note that $T_N\equiv T_{N,z}$ depends on the starting point $z$ of
the walk. Also, we use the convention that starting from a site $z$
counts as one visit to $z$.
\end{remark}

\noindent
\textbf{The paper is organized as follows.} We start (Section \ref{s1.3}) by reviewing the results
from \cite{BG1, G1} which are used in this paper. In Section \ref{secOccupTmes} we derive formulae
for the expected value of occupation times and state their asymptotic properties;
the latter play a major role in the analysis of the asymptotic behaviour of the RW on a strip.
In Section \ref{MR} we define traps and state the main results of the paper (Theorems \ref{ThTraps}
and \ref{ThMain}) which are followed by Theorem \ref{ThAnn} extending to the case of the strip
the classical results from \cite{KKS}. Section \ref{proofTh5} is devoted to the proof of the
properties of traps followed by the derivation of Theorem \ref{ThTraps}.
The proof of Theorem \ref{ThMain} is given in Section \ref{proofTh6}.
Since this proof is similar to that of the main result in \cite{DG}, we focus our attention
on the differences which are due to the fact that this time we deal with a strip.
Section \ref{ScExt} contains the extensions of our results which are not needed in the analysis of the hitting time but
are important for understanding of other properties of RWRE (cf \cite{ESZ, GS, K2,La} for related work in the context of SRWRE)
and will be used in the future work.
The paper has four appendices containing results which are not specific to
RWRE. Most of these results are not completely new, but we present them in the form convenient for our
purposes. Namely, Appendix \ref{AppOccupation} contains the estimates of occupation times for general transient Markov chains.
Standard facts about the Poisson processes and their relation to stable laws are collected in Appendix \ref{SSPP}.
In Appendix \ref{AppRenewal} we prove a renewal theorem for a system of random contractions.
The fact that the assumptions of Appendix \ref{AppRenewal} are applicable in our setting is verified in Section \ref{tail}.
Appendix \ref{ScIndBC} contains the results about mixing properties of random walks on the strip satisfying ellipticity
conditions.

\noindent
\textbf{Some conventions and notations.}

Letters $C,\ \brC,\ c, \mathbf{c}$ denote positive constants, $\varepsilon$ is a strictly
positive and small enough number, and $\theta$ is a constant from the interval
$(0,1)$. The values of all these constants may be different in different sections of the paper.

$[L_a,L_b]\overset{\mathrm{def}}{=}\{(n,i):\, a\le n\le b\} $ is the part of the strip (a box)
 contained between layers $L_a$ and $L_b$, where $a<b$. We use the notation
 $[a,b]$ and the term interval $[a,b]$ for the box $[L_a,L_b]$ when the meaning of this notation
 is clear from the context.

 $\cF_{a,b}$ is the $\sigma$-algebra of events depending only on the environment in $[L_a, L_b].$

 $e_y$ is a vector whose $y$-th coordinate is $1$ and all others are zeros.

 $\mathbf{1}$ is a column vector with all components equal to 1.

If $x=(x(j))$ is a
vector and $A=(a(i,j))$ a matrix we put
\[
\left\|  x\right\|  \overset{\mathrm{def}}{=}\max_{j}|x(j)%
|\ \hbox{ which implies }\ \left\|  A\right\| =
\max_{i}\sum_{j}|a(i,j)|.
\]

We say that $A$ is strictly positive (and write $A>0)$ if all its
components satisfy $a(i,j)>0$. $A$ is called non-negative (and we
write $A\geq0)$ if all $a(i,j)\ $are non-negative. A similar
convention applies to vectors. We shall make use of the following
easy fact:
\[
\hbox{ if }\ A\ge0 \hbox{ then }\ \left\|A\right\|=
\left\|A\mathbf{1}\right\|.
\]
$\bbX$ denotes the set of non-negative unit vectors,
$\bbX=\{{x}:\, x\in\mathbb{R}^{m},\, x\ge 0, \left\|x\right\|=1\}$.

$E_{\mu}(f)$, $\nu(g)$ denote the expectations of functions $f$ and $g$ over
 measure $\mu$ and $\nu$ respectively defined on the relevant probability spaces.

We often deal with $N^{\eps}$, $\ln N$, $\ln\ln N$, etc which are viewed as integer numbers.
Strictly speaking, we should write $\lfloor N^{\eps}\rfloor$, $\lfloor \ln\ln N\rfloor$, etc.
However, our priority lies with the simpler notation and the exact meaning
is always obvious from the context.

\section{Review of related results from previous work.}
\label{s1.3}
The purpose of this review is to list those results from \cite{BG1} and \cite{G1} which will be
used in this work as well as to put the results of the present work into the right context.
We note that many of the statements listed below were proved in \cite{BG1, G1} under assumptions which are much
milder than \eqref{EqC2*}.

\subsection{Auxiliary sequences of matrices.}
Let us fix $a\in\mathbb{Z}$ and define for $n\geq a$ two sequences
of matrices: $\varphi_n$ and $\psi_{n}$. To this end put
$\varphi_a\overset{\mathrm{def}}{=}0$ and let $\psi_{a}$
be a stochastic matrix. For $n>a$ matrices
$\varphi_n$ and $\psi_{n}$ are defined recursively:
\begin{equation}
\varphi_{n}\overset{\mathrm{def}}{=}(I-R_{n}-Q_{n}\varphi_{n-1})^{-1}P_{n},\
\ %
\psi_{n}\overset{\mathrm{def}}{=}(I-R_{n}-Q_{n}\psi_{n-1})^{-1}P_{n}%
\label{EqPsi}%
\end{equation}
Note that the existence of $(I-R_{n}-Q_{n}\psi_{n-1})^{-1}$ follows
from \eqref{EqC2*}.

%The limiting behaviour of matrices $\varphi_n$ and $\psi_n$ when
%$a\to -\infty$ played a very important role in \cite{BG1, G1} and will
%continue to do so in this paper.

\smallskip\noindent\textbf{ Properties of matrices $\varphi_n$.}
We start with the probabilistic definition of $\varphi_n\equiv\varphi_{n,a}=\left(\varphi_{n,a}(i,j)\right)$
(which implies equation (\ref{EqPsi}) for $\varphi_n$):
\[
\varphi_{n,a}(i,j) =\mathbb{P}_\omega\left(\hbox{RW starting from $(n,i)$  hits $L_{n+1}$
at $(n+1,j)$ before visiting $L_a$} \right).
\]
Obviously these probabilities are monotone functions of $a$ and
hence the limits
$\eta_n\overset{\mathrm{def}}{=}\lim_{a\to -\infty}\varphi_{n,a}$
exist for all (!) environments $\omega$.
Lemma 4 in \cite{BG1} implies that
if \eqref{EqC2*} is satisfied then
$\eta_n>0$ for $\mathbf{P}$-almost every $\omega$
and for $n>a$
\begin{equation}%\label{eps}
\varphi_n(i,j)>\varepsilon,\ \
\ \hbox{ $\psi_n(i,j)>\varepsilon$\ \ for $\mathbf{P}$-almost every $\omega$}.
\end{equation}
%The proof follows from (\ref{EqPsi}): since
%$(I-R_{n}-Q_{n}\varphi_{n-1})^{-1}\ge(I-R_{n})^{-1}$ we have that
%$\varphi_{n}\ge(I-R_{n})^{-1}P_{n}$ and this proves the first
%estimates in (\ref{eps}). The second one is obtained in the same
%way (note that it also follows from $\psi_n\ge\varphi_{n}$).

\smallskip\noindent\textbf{Definition of matrices $\zeta_n$.} It is
easy to see that since $\psi_a$ is stochastic, so are all the
$\psi_{n}$, $n> a$ (Lemma 2 in \cite{BG1}). The following statement
from \cite{BG1} describes the $a\to-\infty$ limits of $\psi_n\equiv
\psi_{n}(\psi_a)$ and defines a stationary sequence of stochastic
matrices $\zeta_n$.

\begin{theorem}
\label{ThZeta} Suppose that Condition \eqref{EqC2*} is satisfied.
Then

(a) For $\mathbf{P}$-a.e. sequence $\omega$ there exists $\zeta_{n}=\lim_{a\rightarrow-\infty}\psi_{n}(\psi_a),$
where the convergence is uniform in $\psi_a$ and the limit $\zeta_{n}$ does not depend on the choice of the sequence $\psi_a$.

(b) The sequence $\zeta_{n}=\zeta_{n}(\omega),\ -\infty<n<\infty,$
of $m\times m$ matrices is the unique sequence of stochastic
matrices which satisfies the following system of equations
\begin{equation}
\zeta_{n}=(I-Q_{n}\zeta_{n-1}-R_{n})^{-1}P_{n},\quad n\in\mathbb{Z}.%
\label{EqZeta}%
\end{equation}

(c) The enlarged sequence
$(P_{n},Q_{n},R_{n},\zeta_{n}),\ -\infty<n<\infty,$
is stationary and ergodic.
\end{theorem}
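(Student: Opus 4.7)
I would prove (a) via a uniform contraction argument for (\ref{EqPsi}), deduce (b) as an immediate corollary, and obtain (c) from shift--equivariance together with ergodicity of the i.i.d.\ environment.

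For (a), given two stochastic initial matrices $\psi_a, \tilde\psi_a$ producing sequences $\psi_n, \tilde\psi_n$ via (\ref{EqPsi}), the resolvent identity $X^{-1}-Y^{-1}=X^{-1}(Y-X)Y^{-1}$ applied with $X=I-R_n-Q_n\psi_{n-1}$ and $Y=I-R_n-Q_n\tilde\psi_{n-1}$ yields the one-step identity
\[
D_n \;:=\; \psi_n-\tilde\psi_n \;=\; (I-R_n-Q_n\psi_{n-1})^{-1} Q_n\, D_{n-1}\, \tilde\psi_n .
\]
Iterating down to $a$ expresses $D_n$ as a ``left product'' of the matrices $M_k:=(I-R_k-Q_k\psi_{k-1})^{-1}Q_k$ times $D_a$ times a ``right product'' $\tilde\psi_{a+1}\cdots\tilde\psi_n$. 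The decisive feature is that $D_a\mathbf{1}=0$, a property preserved under right multiplication by stochastic matrices. From the monotonicity $(I-R_k-Q_k\tilde\psi_{k-1})^{-1}\ge(I-R_k)^{-1}$ (valid since $Q_k\tilde\psi_{k-1}\ge 0$) combined with $((I-R_k)^{-1}P_k)(i,j)>\varepsilon$, each entry of $\tilde\psi_k$ is bounded below by $\varepsilon$. Hence the right product satisfies a uniform Doeblin condition and contracts zero-row-sum matrices at a geometric rate $\theta^{n-a}$ with $\theta\in(0,1)$ depending only on $\varepsilon$ and $m$. A parallel application of (\ref{EqC2*}) controls the left product (via, e.g., Birkhoff--Hopf contraction on the cone of positive matrices, or the Lyapunov-exponent techniques used in \cite{BG1}), yielding $\|D_n\|\le C\theta^{n-a}$ uniformly in the initial data. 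Existence of $\zeta_n$ and independence of the initial matrix then follow by a Cauchy argument: for $a_1<a_2<n$, the sequence starting at $a_1$ equals, at step $a_2$, some stochastic matrix $\psi_{a_2}(\psi_{a_1})$, whence $\|\psi_n(\psi_{a_1})-\psi_n(\psi_{a_2})\|\le C\theta^{n-a_2}\to 0$ as $a_2\to -\infty$.

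Part (b) is then immediate: any stationary sequence $\zeta_n'$ of stochastic matrices satisfying (\ref{EqZeta}) also satisfies (\ref{EqPsi}) starting from $\psi_a=\zeta_a'$ at every $a$, and letting $a\to-\infty$ in (a) forces $\zeta_n'=\zeta_n$. For (c), $\zeta_n$ is the almost-sure limit of functions measurable with respect to $\sigma((P_k,Q_k,R_k)_{k\le n})$, hence measurable. Letting $T$ denote the shift $(T\omega)_k=(P_{k+1},Q_{k+1},R_{k+1})(\omega)$, the sequence $n\mapsto \zeta_{n-1}(T\omega)$ solves (\ref{EqZeta}) for $\omega$, so uniqueness from (b) gives $\zeta_n(T\omega)=\zeta_{n+1}(\omega)$. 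Consequently $(P_n,Q_n,R_n,\zeta_n)(\omega)=(P_0,Q_0,R_0,\zeta_0)(T^n\omega)$. Stationarity is inherited from $T$-invariance of $\mathbf{P}$, and ergodicity from the fact that the enlarged sequence is a measurable factor of the Bernoulli shift on the i.i.d.\ environment.

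The main technical obstacle is controlling the left product $M_n\cdots M_{a+1}$: the individual norms $\|M_k\|$ need not be below one, so the bound $\|D_n\|\le C\theta^{n-a}$ cannot follow from a naive product estimate. One must instead exploit the joint action of the left and right products specifically on zero-row-sum matrices, using the full strength of the ellipticity (\ref{EqC2*}). This is the technical heart of the argument in \cite{BG1}.
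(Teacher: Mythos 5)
Your overall architecture (geometric contraction of the recursion \eqref{EqPsi} gives (a); uniqueness in (b) follows by running the recursion from any solution and letting $a\to-\infty$; (c) follows from shift-equivariance plus the fact that $(\zeta_n)$ is a measurable factor of the Bernoulli shift) is the right one, and parts (b) and (c) are correct as written — note that your uniqueness argument never actually uses stationarity, which is fortunate since the theorem asserts uniqueness among \emph{all} stochastic solutions. The problem is that the decisive estimate in (a) is asserted rather than proved, and your own closing paragraph concedes as much. The identity $D_n=M_n\cdots M_{a+1}\,D_a\,\tilde\psi_{a+1}\cdots\tilde\psi_n$ and the Doeblin bound $\tilde\psi_k\ge\varepsilon$ do show that the right factor annihilates zero-row-sum matrices at rate $\theta^{n-a}$ with $\theta=1-m\varepsilon$; but the left factor $M_n\cdots M_{a+1}$ is the finite-box analogue of the product $A_n\cdots A_{a+1}$ of \eqref{DefA}, whose norm grows like $e^{\lambda(n-a)}$ with $\lambda>0$ whenever the walk is transient to the left — a case the theorem must cover, since it assumes only \eqref{EqC2*}. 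Nothing forces $e^{\lambda}\theta<1$, so the product bound $\|M_n\cdots M_{a+1}\|\cdot\theta^{n-a}$ need not decay; and neither Birkhoff--Hopf contraction (which controls the projective action of the left product, not its norm) nor generic "Lyapunov-exponent techniques" closes this. What is required is the cancellation between the expanding directions of the left product and the particular matrix $D_a\tilde\psi_{a+1}\cdots\tilde\psi_n$ it acts on, and your sketch does not exhibit it.

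The gap is closed — in \cite{BG1}, and in Appendix \ref{ScIndBC} of the present paper (Proposition \ref{propD1}) — by a probabilistic rather than algebraic argument: one couples two walkers on $[L_a,L_{n+1}]$, reflected at $L_a$ according to $\psi_a'$ and $\psi_a''$ respectively and absorbed at $L_{n+1}$. They can decouple only upon visiting $L_a$, the decoupling probability is $O(\|\psi_a'-\psi_a''\|)$ (Lemma \ref{Lm1SExit}), and once decoupled the ellipticity \eqref{EqC2*} lets them recouple with probability at least $\varepsilon m$ at each successive layer crossing, so only trajectories remaining decoupled across all $n-a$ layers contribute to $\psi_n'-\psi_n''$. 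This yields $\|\psi_n'-\psi_n''\|\le K\theta^{n-a}\|\psi_a'-\psi_a''\|$ directly, after which your Cauchy argument and your proofs of (b) and (c) go through verbatim. As it stands, the proposal has a genuine gap at its central step.
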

\begin{remark} Statements (a) and (b) imply that
$\zeta_{n}\equiv \zeta_{n}(\omega)$ depends only on the "past" of the environment, namely
on $\omega_{\le
n}\overset{\mathrm{def}}{=}((P_{k},Q_{k},R_{k}))_{k\le n}$.
\end{remark}

We need the following corollary of Theorem \ref{ThZeta} (Remark 4 in \cite{BG1}).
\begin{corollary} \label{Corol1}Suppose that $(P,Q,R)$ satisfies Condition \eqref{EqC2*} (this can be
any triple of matrices from the support of the distribution of $(P_{0},Q_{0},R_{0})$). Then
there is a unique stochastic matrix $\zeta$ such that
\begin{equation}
\zeta=(I-Q\zeta-R)^{-1}P.%
\label{EqZeta1}%
\end{equation}
\end{corollary}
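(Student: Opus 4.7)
The plan is to deduce this corollary directly from Theorem \ref{ThZeta} by applying it to the degenerate (constant) i.i.d.\ sequence of environments in which $(P_n,Q_n,R_n)=(P,Q,R)$ for every $n\in\mathbb{Z}$. Since the triple $(P,Q,R)$ is assumed to satisfy \eqref{EqC2*}, this constant sequence is a legitimate environment to which Theorem \ref{ThZeta} applies (with $\mathbf{P}$ being a point mass).

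For existence, I would introduce, for this constant environment, the auxiliary matrices $\psi_n$ defined by \eqref{EqPsi} starting from some arbitrary stochastic matrix $\psi_a$. Theorem \ref{ThZeta}(a) guarantees that $\zeta_n := \lim_{a\to-\infty}\psi_n$ exists and is independent of the choice of $\psi_a$. By translation invariance of the constant environment (shifting $a$ and $n$ simultaneously leaves the recursion unchanged), the limit $\zeta_n$ must be independent of $n$; denote the common value by $\zeta$. Theorem \ref{ThZeta}(b) says that $\zeta_n$ is stochastic and satisfies \eqref{EqZeta}, which in our setting collapses to exactly \eqref{EqZeta1}.

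For uniqueness, suppose $\zeta'$ is another stochastic matrix with $\zeta'=(I-Q\zeta'-R)^{-1}P$. Choosing $\psi_a=\zeta'$ in \eqref{EqPsi}, an easy induction shows $\psi_n=\zeta'$ for all $n\ge a$, since $\zeta'$ is a fixed point of the one-step map. Applying Theorem \ref{ThZeta}(a) with this choice gives
\[
\zeta' = \lim_{a\to-\infty}\psi_n = \zeta,
\]
where we used that the convergence is uniform in $\psi_a$ and that the limit does not depend on $\psi_a$.

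There is no real obstacle here; the only mildly delicate point is to justify applying Theorem \ref{ThZeta}, which was stated for a genuine i.i.d.\ sequence, to the degenerate point-mass distribution concentrated on $(P,Q,R)$. This is fine because the proof of Theorem \ref{ThZeta} in \cite{BG1} relies only on the ellipticity assumption \eqref{EqC2*}, which holds pointwise for $(P,Q,R)$ by hypothesis, and not on any non-triviality of the distribution.
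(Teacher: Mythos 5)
Your proposal is correct and follows essentially the same route as the paper: apply Theorem \ref{ThZeta} to the constant environment $\omega=\{(P,Q,R)\}$, use shift-invariance to see that $\zeta_n$ is independent of $n$, and read off \eqref{EqZeta1} from \eqref{EqZeta}. The paper obtains uniqueness directly from the uniqueness clause in Theorem \ref{ThZeta}(b) (a constant sequence $\zeta_n\equiv\zeta'$ solving \eqref{EqZeta} must coincide with the canonical one), whereas you run the $\psi_a=\zeta'$ fixed-point argument through part (a); both are fine and amount to the same thing.
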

\begin{proof} Consider the environment with transition probabilities which do not change from layer
to layer and are given by matrices $(P,Q,R)$, that is $\omega=\{(P,Q,R)\}$. Then for this single
environment all conditions of Theorem \ref{ThZeta} are satisfied.
Now statement (a) implies that $\zeta_n=\zeta_{n-1}$
and setting $\zeta:=\zeta_n=\zeta_{n-1}$ turns equation \eqref{EqZeta} into \eqref{EqZeta1}.
\end{proof}

\smallskip\noindent
\textbf{The non-arithmenticity condition.}
We are now in a position to introduce the so called non-arithmeticity condition which will
be often used in the sequel. Let $(P,Q,R)$ and $\zeta$ be as in Corollary \ref{Corol1}. Set
\begin{equation}
\label{APer}
A_{(P,Q,R)}=(1-Q\zeta-R)^{-1} Q
\end{equation}
and let $e^{\lambda_{(P,Q,R)}}$ be the leading eigenvalue of $A_{(P,Q,R)}.$
We say that the environment satisfies the {\it non-arithmeticity condition} if
\begin{equation}\label{NA}
\text{  the distribution of $\lambda_{(P,Q,R)}$ is non-arithmetic.}
\end{equation}

\smallskip\noindent
\textbf{Vectors $\pi_n$.} Our sequence of stochastic matrices $\zeta_n$  is such that
$\zeta_n(i,j)\ge\varepsilon>$ for some $\varepsilon>0$. Due to that we can always construct a sequence
$\pi_n$ of probability vectors such that $\pi_n=\pi_{n-1}\zeta_{n-1}$.
Namely, set $\pi_{n,a}=\tpi_a\zeta_a\ldots\zeta_{n-1}$,
where $\tpi_a$ is a probability vector.
\begin{lemma} %(\cite{G1})
\label{lemma1}
If $\zeta_n(i,j)\ge\varepsilon$ for some $\varepsilon>0$ then the following limit exists
and does not depend on the choice of the sequence of probability vectors $\tpi_a$ :
\begin{equation}\label{y}
\pi_n\overset{\mathrm{def}}{=}\lim_{a\to-\infty}
\tpi_a\zeta_a\ldots\zeta_{n-1},
\end{equation}
 Moreover, for $\theta=1-m\varepsilon$
\begin{equation}\label{y1}
||\pi_n-\pi_{n,a}||\le \theta^{n-1-a}\text{ and }\pi_n(i)>\varepsilon\ \hbox{ for any }\ i\in [1, m].
%\prod_{k=a}^{n-1}(1-\epsilon_{k}),
\end{equation}
%where $\epsilon_k\overset{\mathrm{def}}{=}m\,\min_{i,j}\zeta_k(i,j)$.
\end{lemma}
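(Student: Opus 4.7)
The plan is to base everything on the Dobrushin contraction for stochastic matrices with uniformly positive entries. The hypothesis $\zeta_n(i,j)\ge \varepsilon$ means each row of $\zeta_n$ dominates the constant vector $\varepsilon\mathbf{1}^{\top}$, which is the standard ``one-step minorization'' condition, so each $\zeta_n$ is a strict contraction on the simplex of probability row vectors.

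\textbf{Step 1 (contraction lemma).} First I would prove that if $\zeta$ is an $m\times m$ stochastic matrix with $\min_{i,j}\zeta(i,j)\ge \varepsilon$, then for every row vector $v$ with $\sum_j v(j)=0$ one has $\sum_j |v\zeta(j)|\le \theta \sum_j |v(j)|$, where $\theta=1-m\varepsilon$. The cleanest route is via coupling: writing $v=s(\mu-\nu)$ with $s=\tfrac12 \sum_j|v(j)|$ and $\mu,\nu$ probability vectors supported on disjoint sets, one applies the maximal coupling of the two one-step distributions; the lower bound $\zeta(i,\cdot)\ge \varepsilon\mathbf{1}^\top$ forces the two coupled chains to coincide after one step with probability at least $m\varepsilon$, yielding $\|\mu\zeta-\nu\zeta\|_{\mathrm{TV}}\le \theta\|\mu-\nu\|_{\mathrm{TV}}$, which is equivalent to the displayed $\ell^1$ estimate on $v\zeta$.

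\textbf{Step 2 (Cauchy property and existence of the limit).} For $a'<a\le n$ I write
\[
\pi_{n,a'}-\pi_{n,a}=\bigl(\tilde\pi_{a'}\zeta_{a'}\cdots\zeta_{a-1}-\tilde\pi_a\bigr)\,\zeta_a\zeta_{a+1}\cdots\zeta_{n-1}.
\]
The bracketed factor is a difference of two probability vectors, hence has zero sum and $\ell^1$-norm at most $2$. Iterating the contraction lemma through the $n-a$ matrices $\zeta_a,\dots,\zeta_{n-1}$ gives $\sum_j |\pi_{n,a'}(j)-\pi_{n,a}(j)|\le 2\theta^{n-a}$, uniformly in the choice of $\tilde\pi_{a'}$ and $\tilde\pi_a$. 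This proves that the sequence is Cauchy as $a\to-\infty$, so the limit $\pi_n$ in \eqref{y} exists and does not depend on the choice of $\tilde\pi_a$.

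\textbf{Step 3 (rate and positivity).} Sending $a'\to-\infty$ in the $\ell^1$ bound, and using that any zero-sum vector $v$ satisfies $\max_j |v(j)|\le \tfrac12 \sum_j |v(j)|$ (both positive and negative parts of $v$ have equal mass, which bounds the sup norm from above by either one), one obtains $\|\pi_n-\pi_{n,a}\|\le \theta^{n-a}\le \theta^{n-1-a}$, which is the asserted rate. The pointwise lower bound follows immediately from the recursion $\pi_n=\pi_{n-1}\zeta_{n-1}$: for each $i$,
\[
\pi_n(i)=\sum_j \pi_{n-1}(j)\,\zeta_{n-1}(j,i)\ \ge\ \varepsilon\sum_j \pi_{n-1}(j)=\varepsilon.
\]
There is no real obstacle in this lemma; the only nontrivial ingredient is the Dobrushin contraction with the explicit constant $1-m\varepsilon$, and everything else is iteration and passage to the limit.
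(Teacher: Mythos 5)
Your proof is correct and is precisely the standard Dobrushin-contraction argument the paper has in mind: the paper gives no proof at all, remarking only that the lemma ``follows from the usual contracting properties of products of stochastic matrices,'' which is exactly what you carry out, with the correct constant $\theta=1-m\varepsilon$ and the correct passage from the $\ell^1$ bound to the sup-norm bound. The only microscopic quibble is that your positivity step yields $\pi_n(i)\ge\varepsilon$ rather than the strict inequality $\pi_n(i)>\varepsilon$ as stated, but that mismatch between the hypothesis $\zeta_n(i,j)\ge\varepsilon$ and the strict conclusion is already present in the paper's own formulation.
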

\begin{remarks} 1. In our case vectors $\pi_n\equiv \pi(\omega_{\le
n})$ form a stationary sequence. \newline
2. Lemma \ref{lemma1} is a well known fact which follows from the usual contracting
properties of products of stochastic matrices. We state it here for future references.
\end{remarks}

\noindent \textbf{Matrices $A_n$ and Lyapunov exponents.}
We can finally define the following sequence of matrices:
\begin{equation}
A_{n}\overset{\mathrm{def}}{=}(I-Q_{n}\zeta_{n-1}-R_{n})^{-1}Q_{n}.%
\label{DefA}%
\end{equation}

Obviously, $A_{n}$ is a stationary sequence and the top Lyapunov
exponent of the product of matrices $A_{n}$ is defined as usual by
\begin{equation}
\lambda\overset{\mathrm{def}}{=}\lim_{n\rightarrow\infty}{\frac{1}{{n}}%
}\log\left\|  A_{n}A_{n-1}\dots A_{1}\right\|.\label{DefLyap}%
\end{equation}
It is well known (see \cite{FK}) that with $\mathbf{P}$-probability 1
the limit in (\ref{DefLyap}) exists and does not depend on
$\omega$.

\subsection{Recurrence and transience of RWRE} The recurrence criteria was proved in \cite{BG1}
for a RWRE on a strip in very general ergodic setting. We need the following particular case of this
 result.
\begin{theorem}
\label{ThLyap} Suppose that Conditions \eqref{EqC1} and \eqref{EqC2*} are satisfied.
Then the following statements hold for $\mathbf{P}$-a.e. $\omega$, $\mathbb{P}_{\omega}$-almost surely:
\newline
(a) $\lambda<0$ iff the RW is transient to the right: $\lim_{t\rightarrow\infty}X_t=\infty ,$
\newline
(b) $\lambda>0$ iff
the RW is transient to the left: $\lim_{t\rightarrow\infty}X_t=-\infty$,
\newline
(c) $\lambda=0$ iff the RW is recurrent:  %$\mathrm{P}$-almost surely
$ \limsup_{t\rightarrow\infty}X_t=+\infty$ and $\liminf_{t\rightarrow\infty}
X_t=-\infty$.
\end{theorem}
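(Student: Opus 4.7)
The central tool is a matrix identity that relates the substochastic exit matrices $\varphi_{n,a}$ to the stationary stochastic matrices $\zeta_n$ through products of $A_k$. Subtracting the defining relations $\zeta_n=(I-R_n-Q_n\zeta_{n-1})^{-1}P_n$ and $\varphi_{n,a}=(I-R_n-Q_n\varphi_{n-1,a})^{-1}P_n$ and applying the resolvent identity $X^{-1}-Y^{-1}=X^{-1}(Y-X)Y^{-1}$ with $X=I-R_n-Q_n\zeta_{n-1}$, $Y=I-R_n-Q_n\varphi_{n-1,a}$, one obtains the recursion $\zeta_n-\varphi_{n,a}=A_n(\zeta_{n-1}-\varphi_{n-1,a})\varphi_{n,a}$, which iterates, thanks to $\varphi_{a,a}=0$, to
\[
\zeta_n-\varphi_{n,a}=(A_nA_{n-1}\cdots A_{a+1})\,\zeta_a\,(\varphi_{a+1,a}\varphi_{a+2,a}\cdots\varphi_{n,a}).
\]
Evaluating both sides on $\mathbf{1}$, using $\zeta_n\mathbf{1}=\mathbf{1}$ and the substochasticity of the $\varphi$'s, yields the key estimate $\|\mathbf{1}-\varphi_{n,a}\mathbf{1}\|\le\|A_nA_{n-1}\cdots A_{a+1}\|$. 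Its left-hand side equals $\max_i\Prob_\omega((n,i)\text{ reaches }L_a\text{ before }L_{n+1})$, while the right-hand side grows as $e^{\lambda(n-a)}$ by the Furstenberg--Kesten theorem.

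I then treat the three cases. If $\lambda<0$, fix $n$ and let $a\to-\infty$: the estimate forces $\eta_n=\zeta_n$, which is stochastic, so the walk from $(n,i)$ almost surely reaches $L_{n+1}$ and hence every layer $L_N$. To upgrade this to $Z_t\to+\infty$ I reapply the estimate, now with $a=0$ and large $n$: the probability that the walk from $(n,i)$ hits $L_0$ before first exiting to $L_{n+1}$ is at most $Ce^{\lambda n/2}$; iterating the first-passage decomposition $g_n\le\|A_n\cdots A_1\|\mathbf{1}+\zeta_n g_{n+1}$ for $g_n(i):=\Prob_\omega((n,i)\to L_0)$ shows $g_n\to 0$, and Borel--Cantelli over successive visits to $L_0$ forces only finitely many returns. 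The case $\lambda>0$ reduces to the previous one by the strip reflection $n\mapsto -n$, $P_n\leftrightarrow Q_n$, which negates $\lambda$, and $\lambda=0$ follows by elimination since neither transient regime is then available.

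The main obstacle is precisely the upgrade from ``reaches every layer'' (immediate from $\eta_n=\zeta_n$) to the quantitative transience statement $Z_t\to+\infty$. Converting the operator-norm bound on $A_n\cdots A_{a+1}$ into a genuine probability estimate on returns to $L_0$ relies on the uniform positivity $\zeta_n(i,j)>\varepsilon$ and $\pi_n(i)>\varepsilon$ from Lemma~\ref{lemma1} together with the ellipticity condition \eqref{EqC2*}, which together ensure that the $A_k$'s are uniformly positive and that operator norms of their products accurately reflect the excursion probabilities of the walk.
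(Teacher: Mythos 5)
This theorem is not proved in the paper at all: it is imported from \cite{BG1} (the text only records, in the remark following the statement, that the proof there yields \eqref{forLemma7}). So your attempt has to be judged against what a complete proof requires. Your central identity is correct and is indeed the right engine: subtracting \eqref{EqZeta} from \eqref{EqPsi} via the resolvent identity does give $\zeta_n-\varphi_{n,a}=A_n(\zeta_{n-1}-\varphi_{n-1,a})\varphi_{n,a}$, the telescoped product is right, and evaluating on $\mathbf{1}$ correctly identifies $\|\mathbf{1}-\varphi_{n,a}\mathbf{1}\|$ with the maximal probability of reaching $L_a$ before $L_{n+1}$. With this, your treatment of $\lambda<0\Rightarrow$ transience to the right is essentially sound, with one repair needed: iterating the inequality $g_n\le\|A_n\cdots A_1\|\mathbf{1}+\zeta_ng_{n+1}$ with \emph{stochastic} $\zeta_n$ leaves the remainder $\zeta_n\cdots\zeta_K g_{K+1}$, whose norm is only bounded by $1$ and does not vanish, so the iteration does not close as written. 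You must instead use the exact first-passage identity $g_n=h_n+\varphi_{n,0}\,g_{n+1}$ with the substochastic $\varphi$'s and identify $g_n$ as the monotone limit $\sum_{k\ge n}\varphi_{n,0}\cdots\varphi_{k-1,0}h_k$ (equivalently, bound $g_n$ by the expected number of visits to $L_0$, as in the proof of Lemma \ref{return}).

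The genuine gaps are in the other two cases, which happen to be the hard parts of the theorem. First, the reflection $n\mapsto-n$, $P\leftrightarrow Q$ does \emph{not} formally negate $\lambda$: the reflected walk has its own sequence $\zeta_n'$ solving the mirror-image of \eqref{EqZeta} and its own matrices $A_n'$, and the statement that the top exponent of $\{A_n'\}$ equals $-\lambda$ (or even merely has the opposite sign) is a nontrivial duality that must be proved; it is immediate only for $m=1$, where $A_n=q_n/p_n$. Second, "$\lambda=0$ follows by elimination" is circular. You have proved only the implications $\lambda<0\Rightarrow$ TR and (granting the duality) $\lambda>0\Rightarrow$ TL; these do not exclude that the walk is transient to the right when $\lambda=0$, because you have not established the converses. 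Your identity gives only an \emph{upper} bound on return probabilities; to rule out escape when $\lambda=0$ you need the reverse inequality $\Prob_\omega\bigl((1,i)\to L_{n+1}\text{ before }L_0\bigr)\le C\|A_n\cdots A_1\|^{-1}$ (which follows from the same identity plus the strict positivity $A_n\ge(I-R_n)^{-1}Q_n>\eps$), \emph{together with} the fact that $\lambda=0$ forces $\limsup_n\|A_n\cdots A_1\|\ge 1$ almost surely. The latter is an Atkinson/Kesten-type statement about norms of products with zero top exponent; it is exactly where the i.i.d.\ hypothesis \eqref{EqC1} is used in \cite{BG1}, it can fail to be obvious for subadditive sequences, and it is entirely absent from your argument. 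Finally, even "not TR and not TL" does not yield (c) without first establishing the trichotomy — that for $\mathbf{P}$-a.e.\ $\omega$ exactly one of the three behaviours holds $\Prob_\omega$-a.s. — which requires a separate zero--one law argument (all sites communicate by ellipticity, hence are all recurrent or all transient, and in the transient case bounded jumps force $Z_t\to+\infty$ or $Z_t\to-\infty$ with an $\omega$-measurable, shift-invariant choice of sign). Without these ingredients none of the three "iff"s is actually proved.
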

\begin{remark} %Remember that "$\mathrm{P}$-almost surely" is equivalent to
%"$\mathbb{P}_{\omega}$-almost surely for $\mathbf{P}$-a.e. $\omega$".
The proof of Theorem \ref{ThLyap} given in
\cite{BG1} contains the following useful statement:
the RW is recurrent or transient to the right if and only iff
\begin{equation}
\lim_{a\to-\infty}\varphi_k=\zeta_k
\label{forLemma7}%
\end{equation}
\end{remark}

\subsection{Moment Lyapunov exponents $r(\alpha)$.}
From now on we consider RWRE which are transient to the
right, that is $\lambda <0$. Let us define a function whose properties are
responsible for the speed of growth of our RW.

Let $A_n$ be a sequence of matrices defined by (\ref{DefA}). For $\alpha\ge 0$ put
\begin{equation}
r(\alpha)\overset{\mathrm{def}}{=}
\limsup_{n\to\infty}\left(\mathbf{E}||A_{n} \cdots
A_{1}||^\alpha\right)^{\frac{1}{n}}.
\label{r}%
\end{equation}
Note that if $m=1$ then $\zeta_n=1$, $A_{n}=q_n/p_n$, and $r(\alpha)=\mathbf{E}(q_0/p_0)^{\alpha}$.
In this form $r(\alpha)$ was first introduced in \cite{KKS}.

\begin{lemma}
\label{Mainlemma}
Suppose that \eqref{EqC2*} is satisfied.
Then:
\newline
(a) the following limit exists and is finite for every
$\alpha\ge0$:
\begin{equation} \label{r2}
r(\alpha)=\lim_{n\to\infty}\left(\mathbf{E}||A_{n} \cdots
A_{1}||^\alpha\right)^{\frac{1}{n}}.
\end{equation}
\newline
(b) the convergence in (\ref{r2}) is uniform in
$\alpha\in[0,\,\alpha_0]$ for any $\alpha_0>0$
\newline
(c) $r'(0)=\lambda$.
%\newline
%(d) there is a strictly positive and continuous function
%$\mathrm{b}(\alpha, x)$ defined on $[0,\infty)\times \mathbb{X}$
%such that
%\begin{equation}
%\frac{\mathbf{E}||A_{n} \cdots
%A_{1}x||^\alpha}{r^n(\alpha)}=\mathrm{b}(\alpha,x)
%+\delta_n(\alpha,x)
%\label{egv}%
%&\end{equation}
%where the sequence $\delta_n(\alpha,x)\to0$ as $n\to\infty$
%uniformly in $(\alpha, x)\in [0,\alpha_0]\times \mathbb{X}$ for
%any $\alpha_0>0$.
\end{lemma}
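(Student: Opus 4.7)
\textbf{Proof plan for Lemma \ref{Mainlemma}.}

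Introduce $b_n(\alpha):=\mathbf{E}\|A_n\cdots A_1\|^\alpha$ and $b_n(\alpha,\zeta):=\mathbf{E}[\|A_n\cdots A_1\|^\alpha\mid\zeta_0=\zeta]$. My strategy for (a) hinges on the fact that $\zeta_n$ is a Markov chain on the compact space of stochastic matrices driven by the i.i.d.\ sequence $(P_n,Q_n,R_n)$, with $A_n$ a measurable function of $\zeta_{n-1}$ and $(P_n,Q_n,R_n)$. Under \eqref{EqC2*} this chain is uniformly ergodic: Theorem \ref{ThZeta}(a) supplies the uniform coupling bound $\|\zeta_n^\zeta-\zeta_n^{\zeta'}\|\le C\theta^n$ for any two initial points. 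Submultiplicativity of the norm combined with the Markov property at time $k$ gives
\[
b_{n+k}(\alpha,\zeta)\le\mathbf{E}\bigl[\|A_k\cdots A_1\|^\alpha\,b_n(\alpha,\zeta_k)\mid\zeta_0=\zeta\bigr]\le b_k(\alpha,\zeta)\sup_{\zeta'}b_n(\alpha,\zeta').
\]
Taking $\sup_\zeta$ shows $\overline b_n(\alpha):=\sup_\zeta b_n(\alpha,\zeta)$ is submultiplicative, so Fekete's lemma delivers $\bar r(\alpha):=\lim\overline b_n(\alpha)^{1/n}$ (finiteness follows from $\|A_n\|\le C$). To transfer the conclusion to $b_n(\alpha)=\int b_n(\alpha,\zeta)\,d\mu(\zeta)$ for the stationary law $\mu$ of $\zeta$, I use the coupling bound and uniform ellipticity to establish $c_\alpha\,\overline b_n(\alpha)\le b_n(\alpha)\le\overline b_n(\alpha)$ for some $c_\alpha>0$, which yields $b_n(\alpha)^{1/n}\to\bar r(\alpha)=:r(\alpha)$.

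For (b), note that for each $\omega$ and $n$ the map $\alpha\mapsto\|A_n\cdots A_1\|^\alpha=e^{\alpha\log\|A_n\cdots A_1\|}$ is log-linear in $\alpha$, so $\log b_n(\alpha)$, as the logarithm of a Laplace transform, is convex. The functions $f_n(\alpha):=n^{-1}\log b_n(\alpha)$ are convex on $[0,\alpha_0]$, uniformly bounded by $\alpha_0\log C$, and converge pointwise to $\log r(\alpha)$. Pointwise convergence of a sequence of uniformly bounded convex functions is automatically uniform on compact subintervals of the interior of the common domain; the left endpoint is handled trivially by $f_n(0)=0$, and the right endpoint by the one-sided equicontinuity that convexity plus uniform boundedness on a slightly larger interval supplies. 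Exponentiation gives uniform convergence of $b_n(\alpha)^{1/n}$ to $r(\alpha)$.

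For (c), the bound $|\log\|A_n\cdots A_1\||\le Cn$ from \eqref{EqC2*} together with dominated convergence yields $f_n'(0^+)=n^{-1}\mathbf{E}\log\|A_n\cdots A_1\|\to\lambda$ by the integral form of Furstenberg--Kesten \cite{FK}. Convexity of $f_n$ with $f_n(0)=0$ gives $f_n(\alpha)/\alpha\ge f_n'(0^+)$, which after passing to the limit first in $n$ and then in $\alpha\downarrow0$ delivers $(\log r)'(0^+)\ge\lambda$. The matching upper bound follows from the second-order convex estimate
\[
f_n(\alpha)\le\alpha f_n'(0^+)+\tfrac{\alpha^2}{2}\sup_{t\in[0,\alpha]}f_n''(t)
\]
combined with a uniform-in-$n$ bound $f_n''(t)\le K$ for $t$ near $0$, itself coming from $\mathrm{Var}(\log\|A_n\cdots A_1\|)=O(n)$ under the exponential mixing of the $\zeta$-chain. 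Since $r(0)=1$, $(\log r)'(0)=\lambda$ is equivalent to $r'(0)=\lambda$.

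The principal obstacle is bridging between $\overline b_n(\alpha)$ and $b_n(\alpha)$ in part (a): one has to exploit the uniform contraction of the $\zeta$-chain (from \eqref{EqC2*} and Theorem \ref{ThZeta}) to force $b_n(\alpha,\zeta)$ to lie within a $\zeta$-independent multiplicative window, so that the submultiplicativity transfers from $\sup_\zeta$ to the stationary average. A secondary point of care is the uniform-in-$n$ second-moment bound needed in (c); all remaining steps are standard convex analysis and applications of Furstenberg--Kesten.
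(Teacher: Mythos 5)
The paper does not actually prove Lemma \ref{Mainlemma}: it appears in the review section (Section \ref{s1.3}) and is imported from \cite{G1}, so there is no internal proof to compare against. Judged on its own terms, your plan for (a) and (b) is sound and standard: the conditional quantity $b_n(\alpha,\zeta)$ is well defined because $\zeta_n$ is a Markov chain driven by the i.i.d.\ triples, the submultiplicativity of $\overline b_n(\alpha)=\sup_\zeta b_n(\alpha,\zeta)$ follows from the conditional independence of past and future given $\zeta_k$, and the two-sided comparison $c_\alpha\overline b_n\le b_n\le\overline b_n$ does follow from the coupling bound of Proposition \ref{propD1} together with the entrywise bounds $\varepsilon\le A_j(i,k)\le C$ that \eqref{EqC2*} supplies (so that two products in the same environment started from different $\zeta_0$ differ by a bounded multiplicative factor). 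Part (b) then follows from convexity of $\alpha\mapsto\log b_n(\alpha)$ plus the uniform derivative bound $|f_n'(\alpha)|\le n^{-1}\sup|\log\|A_n\cdots A_1\||\le C$, which gives equicontinuity directly and spares you the endpoint discussion.

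The one genuine soft spot is the upper bound in (c). Convexity and $f_n'(0)\to\lambda$ only give that $\lambda\alpha$ is a supporting line of $\log r$ at $0$, i.e.\ $(\log r)'(0^-)\le\lambda\le(\log r)'(0^+)$; to conclude differentiability you need the quadratic upper bound, and your justification of the uniform-in-$n$ bound on $f_n''(t)$ for $t$ \emph{near} $0$ conflates $\Var(\log\|A_n\cdots A_1\|)=O(n)$ under the original measure (which exponential mixing of the $\zeta$-chain does give at $t=0$) with the same bound under the $t$-tilted measures, which requires spectral perturbation theory for the tilted transfer operators, essentially the machinery of Appendix \ref{SSLD}. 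This is fillable but is not the one-liner your sketch suggests. A more elementary route that avoids second derivatives entirely: from the submultiplicativity and the comparison you already have, $\log r(\alpha)\le k^{-1}\log\overline b_k(\alpha)\le k^{-1}\bigl(\alpha\log C+\log b_k(\alpha)\bigr)$ for every fixed $k$; letting $\alpha\downarrow0$ first gives $\limsup_{\alpha\downarrow0}\alpha^{-1}\log r(\alpha)\le k^{-1}\log C+k^{-1}\mathbf{E}\log\|A_k\cdots A_1\|$, and then $k\to\infty$ yields $(\log r)'(0^+)\le\lambda$ by Furstenberg--Kesten, matching the Jensen lower bound.
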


\subsection{Linear and sub-linear growth of the random walk.}
Let as in the Introduction
$X_t=(Z_t,Y_t)$ be a random walk  starting from a site $z\in L_0$,
$\tT_n$ be the hitting time of layer $L_n$ by this walk.
%and $X(t)$
%be the $\mathbb{Z}$ coordinate of this walk at time $t$.
%the environment $\omega$ is supposed to be fixed.
\begin{theorem}
\label{ThSpeed} Suppose that \eqref{EqC2*}
is satisfied and that $\lambda<0$. Then:

\noindent (i) $r(1)<1$  implies that for $\mathbf{P}$-a.e.
environment $\omega$ with $\mathbb{P}_{\omega,z}$-probability~1
\begin{equation}\label{2.5}
\lim_{n\rightarrow\infty}{n}^{-1}(\tT_n-\mathbb{E}_{\omega,z}\tT_n)=0
\quad \text{and} \quad \lim_{n\rightarrow\infty}{n}^{-1}\mathbb{E}_{\omega,z}\tT_n=c>0
\end{equation}
and for $\mathbf{P}$-a.e. environment the limit $\lim_{n\rightarrow\infty}{n}^{-1}
\mathbb{E}_{\omega,z} (\tT_n) $ exists and is finite.
%\end{equation}
%where
%\begin{equation}\label{2.6}
%\mu=\mathbb{E}\left[y_0(u_0+A_0u_{-1}+\ldots+A_0A_{-1}\ldots
%A_{-k}u_{-k-1}+\ldots)\right],
%\end{equation}
%where the row-vector $y_0$ is defined by (\ref{y}) and the column
%vector $u_k=(I-Q_k\zeta_{k-1}-R_k)^{-1}\mathbf{1}$.

\noindent (ii) $r(1)\geq 1$ implies that for $\mathbf{P}$-a.e.
environment $\omega$ with $\mathbb{P}_{\omega,z}$-probability~1
\begin{equation}\label{2.7}
\lim_{n\rightarrow\infty}{n}^{-1} \tT_n=\infty.
\end{equation}
\end{theorem}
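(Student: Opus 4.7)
The plan is to express the quenched expected hitting time through a matrix recursion, extract its asymptotics via Birkhoff's theorem, and transfer the quenched almost-sure statement via disintegration of the annealed ergodic theorem.

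For $k\in\mathbb{Z}$ let $\tau_k$ denote the column vector with entries $\tau_k(y):=\EXP_{\omega,(k,y)}\tT_{k+1}$, the expected time to reach $L_{k+1}$ from $(k,y)$. A first-step analysis---conditioning on the first exit from layer $L_k$ and using that since $\lambda<0$ the walk a.s.\ returns to $L_k$ after any excursion to $L_{k-1}$, with entry distribution governed by $\zeta_{k-1}$ (via \eqref{forLemma7})---yields
\begin{equation*}
\tau_k=b_k+A_k\tau_{k-1},\qquad b_k:=(I-R_k-Q_k\zeta_{k-1})^{-1}\mathbf{1},
\end{equation*}
with $A_k$ as in \eqref{DefA}. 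Iterating, $\tau_k=\sum_{j=0}^{\infty}A_k A_{k-1}\cdots A_{k-j+1}\,b_{k-j}$, the boundary term vanishing because $\lambda<0$ forces $\|A_k A_{k-1}\cdots A_a\|\to 0$ a.s.\ as $a\to-\infty$. Since the walk starting from $z=(0,y_0)$ enters $L_k$ with distribution $\pi_{k,0}:=e_{y_0}\zeta_0\cdots\zeta_{k-1}$,
\begin{equation*}
\mathbb{E}_{\omega,z}\tT_n=\sum_{k=0}^{n-1}\pi_{k,0}\cdot\tau_k.
\end{equation*}

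The pairs $(A_k,b_k)$ and $(\pi_k,\tau_k)$ (with $\pi_k$ from Lemma \ref{lemma1}) are stationary and ergodic under $\mathbf{P}$. Using the series representation, the fact that $\|b_n\|$ is bounded under \eqref{EqC2*}, $\pi_0(y)>\varepsilon$, and $\|A_0\cdots A_{-j+1}\mathbf{1}\|=\|A_0\cdots A_{-j+1}\|$ (all matrices non-negative), one obtains two-sided bounds
\begin{equation*}
\mathbf{E}[\pi_0\cdot\tau_0]\asymp\sum_{j=0}^{\infty}\mathbf{E}\|A_0 A_{-1}\cdots A_{-j+1}\|,
\end{equation*}
which by Lemma \ref{Mainlemma}(a)--(b) is finite iff $r(1)<1$. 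Birkhoff applied to $(\pi_k\cdot\tau_k)$ gives $n^{-1}\sum_k\pi_k\cdot\tau_k\to c:=\mathbf{E}[\pi_0\cdot\tau_0]$, $\mathbf{P}$-a.s., finite in case (i) and $+\infty$ in case (ii). The exponential bound $\|\pi_{k,0}-\pi_k\|\le\theta^k$ from Lemma \ref{lemma1} lets one replace $\pi_{k,0}$ by $\pi_k$ at a summable cost, yielding $n^{-1}\mathbb{E}_{\omega,z}\tT_n\to c$ for $\mathbf{P}$-a.e.\ $\omega$; positivity $c>0$ follows from $\tau_0\ge b_0\ge\mathbf{1}$.

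For the fluctuation statement, decompose $\tT_n=\sum_{k=0}^{n-1}T'_k$ with $T'_k:=\tT_{k+1}-\tT_k$. Under the annealed measure $\mathrm{P}_z$, the pairs $((\vartheta^k\omega)_{\le 0},Y_k)$---with $Y_k$ the entry point to $L_k$ and $\vartheta$ the environment shift---form a Markov chain with invariant measure $\mathbf{P}\otimes\pi_0$, ergodicity following by a coupling argument exploiting $\zeta_k(i,j)\ge\varepsilon$ from Lemma \ref{lemma1}. Applying the Markov ergodic theorem to $T'_k$ gives $n^{-1}\tT_n\to\mathrm{E}[T'_0]=c$, $\mathrm{P}_z$-a.s.\ in case (i), while a standard truncation argument gives $n^{-1}\tT_n\to+\infty$ in case (ii). Disintegrating against $\mathbf{P}$ transfers these to $\mathbb{P}_{\omega,z}$-a.s.\ statements for $\mathbf{P}$-a.e.\ $\omega$, which combined with the expectation limit yields both parts of the theorem. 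The main technical obstacle lies in this last step: one must verify that the joint environment/entry-point chain is ergodic with invariant measure $\mathbf{P}\otimes\pi_0$, requiring an explicit coupling using the uniform positivity of \eqref{EqC2*} to synchronise two copies started from arbitrary initial distributions and then identify the stationary $Y$-marginal with the vector $\pi_0$ from Lemma \ref{lemma1}.
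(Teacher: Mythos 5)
The paper does not actually prove Theorem \ref{ThSpeed}: it is stated in the review section as a known result, with the proof deferred to \cite{So} and \cite{G1}. Your argument is essentially a reconstruction of the proof in \cite{G1} --- the crossing-time recursion $\tau_k=b_k+A_k\tau_{k-1}$ with $b_k=\bu_k$, the representation $\mathbb{E}_{\omega,z}\tT_n=\sum_k\pi_{k,0}\cdot\tau_k$, Birkhoff for the stationary sequence $\pi_k\cdot\tau_k$ (which is measurable with respect to $\omega_{\le k}$, so stationarity and ergodicity are immediate), and the entry-point Markov chain for the quenched a.s.\ statement --- and the architecture is sound. The transfer from the stationary chain to the actual starting point is unproblematic since $\pi_0(y)>\varepsilon$ makes $\delta_{y_0}$ absolutely continuous with bounded density with respect to the stationary $Y$-marginal.

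Two steps deserve more care than you give them. First, your criterion ``$\sum_j\mathbf{E}\|A_0A_{-1}\cdots A_{-j+1}\|<\infty$ iff $r(1)<1$'' is not a consequence of Lemma \ref{Mainlemma}(a)--(b) alone: existence of the limit $(\mathbf{E}\|A_n\cdots A_1\|)^{1/n}\to r(1)$ is compatible with a summable sequence even when $r(1)=1$, and the case $r(1)=1$ is genuinely part of claim (ii). You need the supermultiplicative lower bound $\mathbf{E}\|A_n\cdots A_1\|\ge c\,r(1)^n$, which does hold here because \eqref{EqC2*} gives $A_n\ge(I-R_n)^{-1}Q_n>\varepsilon$ entrywise so that norms of products are comparable to products of norms up to a uniform constant; but this must be said. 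Second, the identification of $\tau_k$ with the series $\sum_{j\ge0}A_k\cdots A_{k-j+1}b_{k-j}$ cannot be obtained by ``the boundary term vanishes,'' since a priori $\tau_k$ may be $+\infty$ and the recursion then holds only in $[0,\infty]$; the clean route is the monotone finite-box approximation used in the paper's own Lemma \ref{ExpectedOT} (absorb at $L_{k+1}$ and at $L_a$, let $a\to-\infty$), which identifies $\tau_k$ as the minimal nonnegative solution and hence as the series when $\lambda<0$. With these two repairs the proof is complete.
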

The fact that $\lim_{t\rightarrow\infty}{t}^{-1}Z_t = c^{-1}$ as when $r(1)<1$
and that $\lim_{t\rightarrow\infty}{t}^{-1}Z_t =0$ as when $r(1)\ge 1$
follows from \eqref{2.5} and \eqref{2.7} respectively.

These results extend the relevant statements from \cite{So} to the case of the strip.
Further details can be found in \cite{G1}.

\subsection{The diffusive regime (Central Limit Theorem) for the random walk.}

\begin{theorem}
\label{CLTsg2}
Assume that \eqref{EqC2*} is satisfied, $\lambda<0$, and $r(2)<1.$ Then
there exists $D_1$ such that for $\mathbf{P}$-almost every environment
$ \frac{\tT_n-\mathbb{E}_{\omega, z} \tT_n}{\sqrt{n}} $ converges weakly
as $n\to\infty$
to a normal distribution with zero mean and variance $D_1.$
\end{theorem}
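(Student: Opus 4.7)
The plan is to decompose $\tT_n$ as a sum of layer-crossing times and to establish a quenched CLT via a big--small block argument exploiting the uniform ellipticity \eqref{EqC2*} and the strict inequality $r(2)<1$. Setting $\tT_0=0$ and $\tau_k=\tT_{k+1}-\tT_k$, one has
\[
\tT_n-\EXP_{\omega,z}\tT_n=\sum_{k=0}^{n-1}\bigl(\tau_k-\EXP_{\omega,z}\tau_k\bigr).
\]
By the strong Markov property, letting $Y_k^*$ denote the vertical coordinate at the first visit to $L_k$, the process $(Y_k^*)_{k\ge 0}$ is a quenched Markov chain on $\{1,\ldots,m\}$ whose transition matrix at step $k$ is $\zeta_k$; by Lemma \ref{lemma1} combined with \eqref{EqC2*}, this chain is uniformly contracting with a rate $\theta<1$ independent of $\omega$. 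Conditionally on $Y_k^*$, the crossing time $\tau_k$ is controlled by products involving the matrices $A_n$ of \eqref{DefA}, through the excursions of the walk from $L_k$ into $[L_0,L_k]$ before its first exit into $L_{k+1}$.

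Next I would establish moment bounds and compute the asymptotic variance. The assumption $r(2)<1$ and Lemma \ref{Mainlemma} yield $\mathbf{E}\,\EXP_{\omega,z}\tau_k^{2}<\infty$; continuity of $r(\alpha)$ also gives $r(2+\delta)<1$ for some small $\delta>0$, whence $\mathbf{E}\,\EXP_{\omega,z}|\tau_k|^{2+\delta}<\infty$. The geometric contraction of $(Y_k^*)$ implies the covariance bound
\[
|\Cov_\omega(\tau_k,\tau_{k+j})|\le C\,\theta^{j}\sqrt{\EXP_{\omega,z}\tau_k^{2}\,\EXP_{\omega,z}\tau_{k+j}^{2}},
\]
and Birkhoff's ergodic theorem applied to the stationary shift on environments then gives, for $\mathbf{P}$-a.e.\ $\omega$,
\[
\frac{1}{n}\,\Vo(\tT_n)\longrightarrow D_1:=\mathbf{E}\,\Vo(\tau_0)+2\sum_{j=1}^{\infty}\mathbf{E}\,\Cov_\omega(\tau_0,\tau_j)\in[0,\infty),
\]
with $D_1>0$ verified separately by ruling out a coboundary representation for the centred $\tau_k$.

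Finally, partition $\{0,\ldots,n-1\}$ into alternating big blocks of length $b_n=\lfloor n^{1/3}\rfloor$ and small blocks of length $s_n=\lfloor n^{1/6}\rfloor$. The uniform contraction of $(Y_k^*)$ lets me replace the $j$-th big block contribution by an independent version, with total error $o(\sqrt n)$ in $\Prob_{\omega,z}$-probability, while the small blocks contribute $o(\sqrt n)$ thanks to the second-moment bound. The Lindeberg condition for the resulting sum of independent block contributions follows from the $(2+\delta)$-moment bound together with the typical block size of order $\sqrt{b_n}$; the Lindeberg--Feller CLT then yields $(\tT_n-\EXP_{\omega,z}\tT_n)/\sqrt n\Rightarrow\mathcal{N}(0,D_1)$ for $\mathbf{P}$-a.e.\ $\omega$. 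The main obstacle will be upgrading the annealed correlation and moment controls to $\mathbf{P}$-a.s.\ quenched statements---in particular the convergence $\Vo(\tT_n)/n\to D_1$ and the quenched Lindeberg condition---which is accomplished by a Borel--Cantelli argument using the strict inequality $r(2+\delta)<1$ together with the uniform contraction afforded by \eqref{EqC2*}, in the spirit of \cite{G1}.
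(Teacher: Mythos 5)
Theorem \ref{CLTsg2} is stated in the review section (Section \ref{s1.3}) and is imported from \cite{G1}; this paper contains no proof of it, so there is no internal argument to compare yours against. Judged on its own merits, your outline follows what is essentially the route of \cite{G1}: decompose $\tT_n$ into crossing times $\tau_k$, observe that by the strong Markov property the pairs $(Y_k^*,\tau_k)$ form a quenched, non-homogeneous hidden Markov chain whose underlying chain of entry points has transition matrices $\zeta_k$ (correct in the transient regime by \eqref{forLemma7}), use the uniform bound $\zeta_k(i,j)\ge\varepsilon$ from \eqref{EqC2*} to get geometric decay of correlations, and conclude with a blocking/Lindeberg argument together with Birkhoff and Borel--Cantelli for the quenched upgrade. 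This architecture is sound.

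Two places where you assert rather than prove the substantive content. (i) The bound $\mathbf{E}\,\EXP_{\omega,z}\tau_k^{2+\delta}<\infty$ does not follow from Lemma \ref{Mainlemma}, which only concerns $\|A_n\cdots A_1\|$: you must first express the quenched moments of $\tau_k$ through the occupation-time formulas (Lemma \ref{ExpectedOT} together with the geometric-distribution identities \eqref{ExpVar} of Appendix \ref{AppOccupation}) and then dominate them by powers of $\sum_j\|A_j\cdots A_1\|$, whose $(2+\delta)$-moment is finite when $r(2+\delta)<1$. This is the technical heart of the argument in \cite{G1} and is where $r(2)<1$ actually enters. (ii) Your Lindeberg step is too optimistic as written. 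For a block sum $B_j$ of length $b_n=n^{1/3}$, the crude convexity bound $\EXP_\omega|B_j|^{2+\delta}\le b_n^{1+\delta}\sum_{k\in I_j}\EXP_\omega|\tau_k-\EXP_\omega\tau_k|^{2+\delta}$ leaves a factor of order $n^{1/3-\delta/6}$ in the Lindeberg sum, which does not vanish for small $\delta$. To make the block fluctuations genuinely of order $\sqrt{b_n}$ you need a Rosenthal-type inequality for the weakly dependent block sums (or a truncation of the $\tau_k$, which is available here because $r(2)<1$ forces $s>2$ and hence $\max_{k\le n}\EXP_\omega\tau_k=O(n^{1/s})=o(\sqrt n)$). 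Neither point reflects a wrong idea, but both require moment machinery that the sketch takes for granted. Finally, positivity of $D_1$ is not part of the statement, so the coboundary discussion can be dropped.
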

\begin{remark} It is easy to show that if in addition to the conditions of
Theorem \ref{CLTsg2} also \eqref{EqC1}
is satisfied, then there are constants $\bc$ and $D_2$ such that
$\frac{\mathbb{E}_{\omega, z} \tT_n-\bc n}{\sqrt{n}}$ converges weakly
to a normal distribution with zero mean and variance $D_2.$
Consequently $\frac{\tT_n-\bc n}{\sqrt{n}}$ converges weakly as $n\to\infty$
in the annealed setting (that is with respect to $\mathrm{P}_z:=\bP\ltimes\Prob_{\omega,z}$)
to a normal distribution with zero mean and variance $D=D_1+D_2.$
\end{remark}

\section{Occupation times.}\label{secOccupTmes}
As stated in the Introduction, in this work the study the asymptotic behaviour of the RWRE is conducted
in terms of that of the asymptotic behaviour of occupation times. In this section we derive formulae
for the expectations of occupation times and discuss some of their properties.
Denote the time spent by the walk at site $x=(n,y)$ by $\xi_x$. Obviously, the
distribution of $\xi_x$ depends on the starting point of the walk, say $(k,i)$.
Since $\xi_x$ conditioned on the walk starting from $x$ has a geometric distribution, it
is easy to find the parameters of $\xi_x$ also for the walk starting from arbitrary $(k,i)$.
Namely, we shall find $F_{k,i}\de\mathbb{E}_{\omega,(k,i)} (\xi_x)$
for all $(k,i)$ in the strip including $F_{x}=\mathbb{E}_{\omega,x} (\xi_x)$. Then
$$
\mathbb{P}_{\omega,(k,i)}\{X\text{ reaches } x\}=\mathbb{P}_{\omega,(k,i)}\{\xi_x\ge 1\}=
F_{k,i}F_{x}^{-1}%\equiv \mathbb{E}_{\omega,(k,i)} (\xi_x)/\mathbb{E}_{\omega,x} (\xi_x).
$$
and hence
$$
\mathbb{P}_{\omega,(k,i)}\{\xi_x=0\}= 1-F_{k,i}F_{x}^{-1}, \quad
\mathbb{P}_{\omega,(k,i)}\{\xi_x=j\}=F_{k,i}F_{x}^{-2}(1-F_{x}^{-1})^{j-1} \quad \text{if}\quad j\ge1.
$$
The expressions for $F_{k,i}$ will be given in terms of the matrices defined in section \ref{s1.3}.
Denote by $F_k$ the $m$-dimensional
vector with components $F_{k,i},\ 1\le i\le m$ and let $e_y\in \mathbb{R}^m$ be a vector
whose $y^{\mathrm{th}}$ coordinate is 1 and all others are zeros.
\begin{lemma}\label{ExpectedOT} Suppose that \eqref{EqC2*} holds, $x=(n,y)$.
Then for $\mathbf{P}$ - almost all $\omega$
\begin{equation}\label{ExpectF}
 F_k\equiv F_k(n,y)=\sum_{j=n}^{\infty} \zeta_k\dots \zeta_{j-1} A_j \dots A_{n+1} u_{n,y},\quad\text{if }k<n
\end{equation}
\begin{equation}\label{ExpectFgen}
 F_k\equiv F_k(n,y)=\sum_{j=k}^{\infty} \zeta_{k}\dots \zeta_{j-1} A_j \dots A_{n+1} u_{n,y},\quad\text{if }k\ge n
\end{equation}
where
\begin{equation}\label{ExpectF1}
u_{n,y}=(I-Q_n \zeta_{n-1}-R_n)^{-1} e_{y}.
\end{equation}
\end{lemma}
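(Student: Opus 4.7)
The plan is to derive a first-passage recursion for the vector Green's function $F_k$ and then verify that the proposed series solves it. Setting $N_k=(I-R_k-Q_k\zeta_{k-1})^{-1}$, I first record the probabilistic meaning of the matrices involved: since $R_k+Q_k\zeta_{k-1}$ is the sub-stochastic kernel on $L_k$ describing one ``effective'' step that either stays in $L_k$ or goes down to $L_{k-1}$ and returns to $L_k$ at its next visit, $N_k(i,j)$ equals the expected number of visits to $(k,j)$ before the walk starting from $(k,i)$ first reaches $L_{k+1}$. In particular $u_{n,y}=N_ne_y$ is the expected time at $(n,y)$ during the initial sojourn of the walk started in $L_n$; the stochastic matrix $\zeta_k$ is the first-hit distribution on $L_{k+1}$; and $A_k=N_kQ_k$ counts the expected number of down-jumps from $L_k$ landing at each site of $L_{k-1}$ before the first hit of $L_{k+1}$.

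Next I apply the strong Markov property at the first hitting time $\sigma_{k+1}$ of $L_{k+1}$, decomposing the visits to $(n,y)$ into those made before and after $\sigma_{k+1}$; this yields
\begin{equation}\label{planA}
F_k=g_k+\zeta_kF_{k+1},
\end{equation}
where $g_k(i)$ is the expected number of pre-$\sigma_{k+1}$ visits to $(n,y)$ starting from $(k,i)$. For $k<n$ the walk is confined to $L_{\le k}$ before $\sigma_{k+1}$, so $g_k=0$; for $k=n$ the identification above gives $g_n=u_{n,y}$. For $k>n$ I apply the strong Markov property once more at each successive down-jump $L_k\to L_{k-1}$ occurring before $\sigma_{k+1}$: each such jump, landing at $(k-1,j)$, initiates an excursion in $L_{\le k-1}$ terminating at the next visit to $L_k$, and during this excursion the expected number of visits to $(n,y)$ is precisely $g_{k-1}(j)$ by definition. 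Combined with the fact that $A_k(i,j)$ is the expected number of such down-jumps, a linearity-of-expectation argument gives
\begin{equation}\label{planB}
g_k=A_kg_{k-1}\qquad(k>n),
\end{equation}
whence $g_j=A_jA_{j-1}\cdots A_{n+1}u_{n,y}$ for every $j\ge n$ (with empty product $=I$).

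Iterating \eqref{planA} for $k\ge n$ yields $F_k=\sum_{j=k}^{\infty}\zeta_k\cdots\zeta_{j-1}g_j$, which is \eqref{ExpectFgen}; for $k<n$ the recursion collapses to $F_k=\zeta_kF_{k+1}$, and composing down to $F_n$ produces \eqref{ExpectF}. To justify convergence of the infinite series and pin down the right solution, I invoke the transience assumption $\lambda<0$ of Theorem \ref{ThLyap}: by the Furstenberg--Kesten theorem $\|A_jA_{j-1}\cdots A_{n+1}\|$ decays exponentially in $j$ for $\mathbf{P}$-a.e.\ environment, while stochasticity of each $\zeta_\ell$ gives $\|\zeta_k\cdots\zeta_{j-1}v\|\le\|v\|$ for $v\ge 0$. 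Thus the series converges absolutely, its truncations $F_k^{(M)}=\sum_{j=k}^{M}\zeta_k\cdots\zeta_{j-1}A_j\cdots A_{n+1}u_{n,y}$ satisfy \eqref{planA}, and passing to the limit identifies this limit as the unique bounded solution of \eqref{planA} (using $F_k\to 0$ as $k\to+\infty$, another consequence of transience), which therefore coincides with $F_k$.

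The main subtle step is the excursion identity \eqref{planB}: one must justify that the random sequence of successive down-jumps from $L_k$ before $\sigma_{k+1}$ together with their subsequent sub-excursions in $L_{\le k-1}$ can be untangled by the strong Markov property so as to factor the expectation as $A_k\cdot g_{k-1}$ rather than producing an entangled expression. Once this decomposition is set up carefully, the remainder of the argument is straightforward linear algebra built on the definitions $\zeta_k=N_kP_k$ and $A_k=N_kQ_k$.
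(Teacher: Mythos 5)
Your proof is correct, but it follows a genuinely different route from the paper's. The paper works in a finite box $[L_a,L_b]$ with absorbing boundaries, writes the one-step (first-step analysis) system $\tF_k=\Phi_k+P_k\tF_{k+1}+R_k\tF_k+Q_k\tF_{k-1}$, solves this three-term recursion algebraically via the transfer ansatz $\tF_k=\varphi_k\tF_{k+1}+d_k$ (Lemma 8 of \cite{G1}), and then lets $a\to-\infty$, $b\to+\infty$, using the monotonicity $\varphi_j\nearrow\zeta_j$, $\tA_j\nearrow A_j$ to pass to the limit term by term. You instead work directly in infinite volume: you derive the two-term first-passage recursion $F_k=g_k+\zeta_kF_{k+1}$ by the strong Markov property at $\sigma_{k+1}$, and obtain $g_k=A_kg_{k-1}$ by an excursion/Wald decomposition of the down-jumps from $L_k$. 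What your approach buys is probabilistic transparency — it makes visible why $\zeta_k=N_kP_k$, $A_k=N_kQ_k$ and $u_{n,y}=N_ne_y$ are the natural objects — at the cost of having to (i) invoke \eqref{forLemma7} to identify $\zeta_k$ as the exact first-hit distribution on $L_{k+1}$ (so transience $\lambda<0$ enters at the very start, just as it does implicitly in the paper), (ii) justify the Wald-type factorization in $g_k=A_kg_{k-1}$, which you rightly flag as the delicate step, and (iii) supply the boundary condition $F_M\to0$ as $M\to+\infty$ to select the right solution of the iterated recursion. The paper's finite-volume scheme trades this for a purely monotone-convergence argument in which convergence of the infinite series and identification of the limit come for free; on the other hand it leans on the external solution formula from \cite{G1}. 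Both arguments are sound; yours would benefit from a sentence proving $F_M\to0$ (e.g.\ $F_M(i)\le F_x\,\mathbb{P}_{\omega,(M,i)}(\xi_x\ge1)$, and the latter probability tends to $0$ since by transience $x$ is visited only finitely often), and from noting that finiteness of $F_x$ itself is a consequence of transience.
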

\begin{remark} In the above formulae, we use the conventions that $ A_j \dots A_{n+1}=I$ if $j\le n$
 and $\zeta_{k}\dots \zeta_{j-1}=I$ if $k\ge j$. Thus the first term in \eqref{ExpectF} is
 $\zeta_k\dots \zeta_{n-1} u_{n,y}$ and the first term in \eqref{ExpectFgen} is $A_k \dots A_{n+1} u_{n,y}$.
\end{remark}
\begin{proof} Consider a box $[L_a,L_b]$ with $a<k, n<b$ and a walk $\tX$ on this box starting from
$(k,i)$ with absorbtion at layers $L_a$ and $L_b$. Denote by $\tF_k\de (\tF_{k,i})_{1\le i\le m}$ the
vector of conditional expectations, $\tF_{k,i}\de\mathbb{E}_{\omega,(k,i)} (\xi_x^{a,b})$, where
$\xi_x^{a,b}$ is the occupation time of $x$ by the walk starting from $(k,i)$.
It is easy to see (first step analysis) that
$\tF_k$ satisfy the following system of equations:
\begin{equation}\label{ExpectF2}
 \tF_k=\Phi_k+P_k \tF_{k+1}+R_k \tF_k+Q_k \tF_{k-1}\hbox{ if } a<k<b \hbox{ and }
\tF_a=\tF_b=0,
\end{equation}
where $\Phi_k=e_y$ if $k=n$ and $\Phi_k=0$ otherwise. Systems of equations
of this form were studied in \cite{G1}.
The idea is to look for solutions to \eqref{ExpectF2} of the form $\tF_k=\varphi_{k}\tF_{k+1}+d_k$
with $\varphi_{a}=0$ and $d_a=0$.
A simple calculation (see Lemma 8 in \cite{G1}) shows that $\varphi_{k}$ satisfy \eqref{EqPsi} and that
\begin{equation}\label{ExptF}\tF_k=\varphi_{k} \dots \varphi_{n-1} d_n+
\varphi_{k} \dots \varphi_{n} d_{n+1}+\dots+
\varphi_{k} \dots \varphi_{b-2} d_{b-1}.
\end{equation}
where
$$ d_r=\tu_r+\tA_r \tu_{r-1}+\tA_r \tA_{r-1} \tu_{r-2}+\dots+
\tA_r\dots \tA_j \tu_{j-1}+\dots $$
and
$$ \tu_r=(I-Q_r \varphi_{r-1}-R_r)^{-1} \Phi_r,\ \ \tA_r\de
(I-Q_r \varphi_{r-1}-R_r)^{-1}Q_r. $$
In our case
$d_l=\tA_l \dots \tA_{n+1} \tu_n$ if $l>n,$
$d_n=\tu_n$ and $d_l=0$ otherwise which turns \eqref{ExptF} into a version of \eqref{ExpectF}, \eqref{ExpectFgen}
with $\zeta$'s replaced by $\varphi$'s and the sums being finite.
Note that: \newline (a) $\lim_{a\to-\infty}\varphi_{j}=\zeta_j$ since $\lambda<0$ (see \eqref{forLemma7} or \cite{BG1});
\newline (b) moreover $\varphi_{j}\nearrow \zeta_j$ and therefore also $\tA_j\nearrow A_j$ as $a\to-\infty$;
\newline (c) thus $\tF_k$ is monotonically increasing as $a$ decreases or $b$ increases with the terms
of $\tF_k$ converging to the corresponding terms of $F_k$.
\newline Series \eqref{ExpectF}, \eqref{ExpectFgen} converge $\mathbf{P}$ - almost surely (once again due to $\lambda<0$)
and since they have positive entries, it follows that $\tF_k$ converges $\mathbf{P}$ - almost surely
to  $F_k$ as $a\to-\infty$ and $b\to+\infty$ and this proves the Lemma.
\end{proof}
One immediate corollary from \eqref{ExpectFgen} is the estimate of the probability of return from
$L_{n+l} $ to $L_{n}$. For a $\theta_0<1$, set
\begin{equation}\label{returnOmega}
\Omega_{n, l, \theta_0}=\left\{\omega: \mathbb{P}_{\omega}\{X\text{ visits } L_n \text{ after }L_{n+l}\}\ge\theta_0^l\right\}.
\end{equation}
\begin{lemma}\label{return}
There are $C>0$, $\theta_0, \theta_1\in(0,1)$ such that
\begin{equation}
\bP\left\{\Omega_{n, l, \theta_0}\right\}<C\theta_1^l.
\end{equation}
\end{lemma}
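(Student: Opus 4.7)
The plan is to bound the quenched return probability from $L_{n+l}$ to $L_n$ by a sum of norms $\|A_j\cdots A_{n+1}\|$, and then apply Markov's inequality using the moment Lyapunov exponent $r(\alpha)$ of Lemma \ref{Mainlemma}. First, by the strong Markov property at the first hitting time of $L_{n+l}$, it suffices to bound $\max_i \mathbb{P}_{\omega,(n+l,i)}\{X\text{ ever visits }L_n\}$. For each fixed $y\in\{1,\dots,m\}$, the strong Markov property at the first visit to $(n,y)$ gives
$$F_{n+l}(n,y)(i)=\mathbb{P}_{\omega,(n+l,i)}\{X\text{ visits }(n,y)\}\cdot F_n(n,y)(y),$$
and from \eqref{ExpectFgen} at $k=n$ the $j=n$ summand is just $u_{n,y}$, so $F_n(n,y)(y)\ge u_{n,y}(y)\ge 1$ (because $u_{n,y}=e_y+\sum_{k\ge 1}(R_n+Q_n\zeta_{n-1})^k e_y$). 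Summing over $y$, the return probability from $(n+l,i)$ to $L_n$ is therefore at most $\sum_y F_{n+l}(n,y)(i)$.

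Next I would apply \eqref{ExpectFgen} for $k=n+l$, use that each $\zeta_k$ is stochastic (so $\|\zeta_{n+l}\cdots\zeta_{j-1}\|\le 1$), and establish a deterministic a priori bound $\|u_{n,y}\|\le C_0(\varepsilon,m)$. The latter follows from \eqref{EqC2*}: $\|R_n\|\le 1-\varepsilon$ yields $\|(I-R_n)^{-1}\|\le 1/\varepsilon$; the identity $(I-R_n)^{-1}(P_n+Q_n)\mathbf{1}=\mathbf{1}$ combined with $(I-R_n)^{-1}P_n(i,j)>\varepsilon$ gives $\|(I-R_n)^{-1}Q_n\zeta_{n-1}\|=\|(I-R_n)^{-1}Q_n\mathbf{1}\|\le 1-m\varepsilon$; a geometric series then gives $\|(I-R_n-Q_n\zeta_{n-1})^{-1}\|\le 1/(m\varepsilon^2)$. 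Combining, there is a deterministic $C_1=C_1(\varepsilon,m)$ with
$$\mathbb{P}_\omega\{X\text{ visits }L_n\text{ after }L_{n+l}\}\le C_1\sum_{j\ge n+l}\|A_j A_{j-1}\cdots A_{n+1}\|.$$

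To conclude, Lemma \ref{Mainlemma}(c) gives $r(0)=1$ and $r'(0)=\lambda<0$, so I can fix $\alpha\in(0,1]$ with $r(\alpha)<1$ and then $\delta>0$ with $r(\alpha)+\delta<1$. By Lemma \ref{Mainlemma}(a) and stationarity of $\{A_k\}$ (from Theorem \ref{ThZeta}(c)), $\mathbf{E}\|A_j\cdots A_{n+1}\|^\alpha\le C_2(r(\alpha)+\delta)^{j-n}$ for all $j>n$. Using the subadditivity $\left(\sum a_j\right)^\alpha\le \sum a_j^\alpha$ valid for $0<\alpha\le 1$ and Markov's inequality,
$$\bP\{\Omega_{n,l,\theta_0}\}\le \bP\Big\{\sum_{j\ge n+l}\|A_j\cdots A_{n+1}\|\ge C_1^{-1}\theta_0^l\Big\}\le C_3\Big(\frac{r(\alpha)+\delta}{\theta_0^\alpha}\Big)^l.$$
Since $r(\alpha)+\delta<1$, the interval $((r(\alpha)+\delta)^{1/\alpha},1)$ is non-empty; choosing any $\theta_0$ from it makes $\theta_1:=(r(\alpha)+\delta)\theta_0^{-\alpha}\in(0,1)$ as required. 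The main technical step is the middle paragraph: converting Lemma \ref{ExpectedOT} into a bound with the random $u_{n,y}$ replaced by a deterministic constant, using only the ellipticity \eqref{EqC2*} and stochasticity of the $\zeta_k$. Once this is in place, the Markov estimate exploiting $r(\alpha)<1$ for small $\alpha$ is routine.
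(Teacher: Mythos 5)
Your proof is correct and follows essentially the same route as the paper's: bound the quenched return probability by $\sum_y F_{n+l}(n,y)$, reduce via \eqref{ExpectFgen} to $\Const\sum_{j\ge n+l}\|A_j\cdots A_{n+1}\|$, and apply Markov's inequality with an $\alpha$ for which $r(\alpha)<1$. The extra details you supply (the deterministic bound on $\|u_{n,y}\|$, the restriction $\alpha\le 1$ for subadditivity, and the $\delta$-margin needed because $r(\alpha)$ is a limit) are exactly the steps the paper leaves implicit, and they check out.
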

\begin{proof}
It follows from \eqref{ExpectFgen} that $||F_{n+l}(n,y)||\le \Const\sum_{j=n+l}^{\infty} || A_j \dots A_{n+1}||.$
Let $\xi^{n,l}$ be the number of visits to $L_n$  by the walk starting from $L_{n+l}$.
Then
$$
\begin{aligned}
&\mathbb{P}_{\omega}\{X\text{ visits } L_n \text{ after }L_{n+l}\}\le \max_{z\in L_{n+l}}\mathbb{P}_{\omega,z}\{\xi^{n,l}\ge 1\}
\le \max_{z\in L_{n+l}}\mathbb{E}_{\omega,z}(\xi^{n,l})\\
&\le \max_{y} ||F_{n+l}(n,y)||\le \Const\sum_{j=n+l}^{\infty} || A_j \dots A_{n+1}||
\end{aligned}
$$
Fix any $\alpha$ such that $r(\alpha)<1$. Then for any $\theta_0>0$
$$
\bP\{\sum_{j=n+l}^{\infty} || A_j \dots A_{n+1}||\ge \theta_0^l\}\le
\theta_0^{-\alpha l}\sum_{j=n+l}^{\infty} \bE(|| A_j \dots A_{n+1}||)^{\alpha}
\le \Const\theta_0^{-\alpha l} r(\alpha)^l.
$$
and it remains to choose  $\theta_0$ so that $\theta_1:=r(\alpha)\theta_0^{-\alpha}<1$.
\end{proof}
Let us now discuss a corollary which follows from \eqref{ExpectF}. This formula will
quite often be used when $n-k>cN^\eps$ for some $\eps>0$ and $N\to\infty$ in which case
the expression for $F_k(n,y)$ can be simplified. Namely, by Lemma \ref{lemma1}
\begin{equation}\label{prodZeta}
\zeta_k\dots \zeta_{j-1}=\bpi_j +O(\theta^{j-k}),
\end{equation}
where $\bpi_j$ is a rank one matrix all rows of which are equal to $\pi_j.$ We can now rewrite \eqref{ExpectF} as
\begin{equation}\label{ExpectF21}
 F_k(n,y)=\sum_{j=n}^{\infty} (\bpi_{j} +O(\theta^{j-k})) A_j \dots A_{n+1} u_n=
 (1+O(\theta^{n-k}))\sum_{j=n}^{\infty} \bpi_j  A_j \dots A_{n+1} u_{n,y}.
\end{equation}
Fix $\eps>0$ and $\theta\in(0,1).$ Then it is easy to see that for sufficiently large $N$
$$\mathbf{P}\left\{\theta^{N^\eps} \max_{n\in[0,N]}\sum_{j=n}^{\infty}
|| A_j \dots A_{n+1} ||>N^{-50(1+s)}\right\}< N^{-100}.$$
Since errors of such order as well as events of such small probability will not
play any role for our results and in our proofs (see the statements of the
main theorems below), the dependence of $F_k(n,y)$ on the starting point of
the walk can be neglected this vector can be replaced by a single number
\begin{equation}\label{ExpectFy}
 \rho_{n,y}:=\sum_{j=n}^{\infty} \pi_j A_j \dots A_{n+1} u_n.
\end{equation}
We also set
\begin{equation}\label{ExpectF10}
 \rho_{n}:=\sum_y \rho_{n,y}= \sum_{j=n}^{\infty} \pi_j A_j \dots A_{n+1}\bu_n
\end{equation}
where $ \bu_n=(I-Q_n \zeta_{n-1}-R_n)^{-1} \one $. Obviously, $\rho_n$
is the expectation of the time spent by the walk in layer $n$.

Since $A_j$ is a stationary sequence of positive matrices there exists a stationary sequence
of vectors $v_j>0$, $v_j\in\bbX$ and numbers  $\lambda_j>0$ such that
\begin{equation}
\label{DefV}
A_j v_{j-1}=\lambda_j v_{j}
\end{equation}
and there is a sequence of functionals
$l_j$ and a $\theta<1$ such that for any vector $u$
$$ A_j \dots A_{n+1} u=l_n(u) \lambda_j \dots \lambda_{n+1}(1+O(\theta^{n-j})) v_{j}. $$
Moreover $l_n$ is well approximated by local functions, that is,
there exists $\cF_{n, n+r}$ measurable functions $l_{n,r}$ %depending only on $\{P_k, Q_k, R_k\}_{k\in [n, n+r]}$
such that
$||l_n-l_{n,r}||\leq \theta^r $ whenever $r>r_0$ ($r_0$ depends only on $\theta$ and $\varepsilon$
from Condition \eqref{EqC2*}) .

The foregoing discussion allows us to write
\begin{equation}
\rho_{n}=l_n(\bu_n) w_n
+\cR_{n}
\end{equation}
where
\begin{equation}
\label{DefW}
w_n=\sum_{j\ge n} \lambda_j \dots \lambda_{n+1} (\pi_j, v_{j})
\end{equation}
and $\cR_{n}$ denotes the contribution of subleading terms. Denote
$$w_{n_1, n_2}=\sum_{j=n_1}^{n_2} \lambda_j \dots \lambda_{n_1+1} (\pi_j, v_{j}) .$$
The following lemma characterizes the tail behaviour of the distribution of $w_n$ and thus also
of $\rho_n(y)$ and $\rho_n$. It adjusts to our needs some well known results from \cite{K}; the latter
played a major role in many previous studies of the asymptotic behaviour of the RWRE,
in particular in \cite{KKS, DG}. The derivation of this lemma will be given in Section \ref{tail} using the
results of Appendix \ref{AppRenewal}.
The lemma relies on the assumption that
\begin{equation}\label{defs}
\text{there is $s>0$ such that $r(s)=1.$}
\end{equation}
Note that $\ln (r(\cdot))$ is a strictly convex function (see e.g. section \ref{SSLD}) and therefore the existence of the solution
$s$ to \eqref{defs} implies its uniqueness. On the other hand if \eqref{defs} has no positive solutions then
$r(\alpha)<1$ for all $\alpha>0.$ In particular, the walk is diffusive in that case in view of Theorem \ref{CLTsg2}.

For the rest of the paper we suppose that \eqref{defs} is satisfied.

Further analysis will heavily rely on the asymptotic properties of the of tails of distributions of $\rho_n$
which will follow from those of $w_n$. The latter are described by the following lemma.
\begin{lemma}
\label{LmWRen}
Suppose that \eqref{EqC2*} is satisfied.
Then there are constants $\brC$ and $\brs>s$ such that

(a) If $n_2-n_1>\brC \ln t$ then
$$ \bP(w_{n_1}-w_{n_1, n_2}\geq 1|\zeta_{n_1}=\zeta, \pi_{n_1}=\pi, v_{n_1}=v)\leq C t^{-\brs}.$$

(b) If $n_2-n_1>\brC \ln t$ and the non-arithmeticity condition \eqref{NA} holds (in addition to \eqref{EqC2*}) then
there is a function $f(\zeta, \pi, v)>0$ such that
$$ \bP(w_{n_1, n_2}\geq t|\zeta_{n_1}=\zeta, \pi_{n_1}=\pi, v_{n_1}=v)\sim f(\zeta, \pi, v) t^{-s} .$$
In particular
$$ \bP(w_{n}\geq t|\zeta_{n}=\zeta, \pi_n=\pi, v_{n}=v)
\sim f(\zeta, \pi, v) t^{-s} .$$

(c) %For any $k\leq n$ we have
%$$ \bP(\cR_{n}>t|\zeta_{n}=\zeta, \pi_n=\pi, v_{n}=v)\leq C t^{-s}. $$
There exists $\brs>s$ and $\brC>0$ such that %for any $k\leq n-\brC \ln t$ we have
$$ \bP(\cR_{n}>t|\zeta_{n}=\zeta, \pi_n=\pi, v_{n}=v)
\leq C t^{-\brs}. $$
\end{lemma}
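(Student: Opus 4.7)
The plan is to reduce all three statements to the random affine recursion
$w_n = (\pi_n, v_n) + \lambda_{n+1} w_{n+1}$,
obtained by splitting the first term off the series \eqref{DefW}. This is a Kesten-type equation: the multiplier $\lambda_{n+1}$ governs the tail, and by the definition \eqref{r} together with the assumption \eqref{defs}, the sequence $\lambda_j$ has critical $s$-th moment. The additive increment $(\pi_n,v_n)$ is bounded (both factors have unit norm and positive entries by Lemma \ref{lemma1} and the definition \eqref{DefV}), and $\lambda_{n+1}$ is bounded away from $0$ and $\infty$ by \eqref{EqC2*}. The enlarged chain $(\zeta_n,\pi_n,v_n,\lambda_{n+1})$ is a stationary Markov chain driven by the i.i.d.\ environment, which is precisely the setting of the abstract renewal theorem in Appendix \ref{AppRenewal}.

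For part (b), I would introduce $\tau_j=-\log\lambda_j$ so that $w_{n_1,n_2}=\sum_{j=n_1}^{n_2} e^{-(\tau_{n_1+1}+\cdots+\tau_j)}(\pi_j,v_j)$, turning the tail problem into a question about when the partial sums $S_j=\tau_{n_1+1}+\cdots+\tau_j$ first hit level $-\log t$. The relation $r(s)=1$ is exactly the Cram\'er condition $\bE[e^{-s\tau}]=1$ in an averaged sense; after the standard tilt at parameter $s$, one obtains a genuine random walk with a finite drift. Non-arithmeticity \eqref{NA}, stated directly in terms of $\lambda_{(P,Q,R)}$, is the non-arithmeticity of $\tau$ under the tilted measure, and the Kesten--Goldie renewal theorem packaged as Appendix \ref{AppRenewal} then yields the precise tail $f(\zeta,\pi,v)t^{-s}$. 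The function $f$ arises as the renewal density at the tilted stationary initial state $(\zeta,\pi,v)$, and is positive because all ingredients in the recursion are.

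Part (a) is the matching rough upper bound and requires no tilt. The identity $w_{n_1}-w_{n_1,n_2}=\lambda_{n_1+1}\cdots\lambda_{n_2+1}\,w_{n_2+1}$, together with Chebyshev at an exponent $\sigma<s$, gives
\[
\bP(w_{n_1}-w_{n_1,n_2}\geq 1\mid\cdots)\leq \bE[(\lambda_{n_1+1}\cdots\lambda_{n_2+1})^{\sigma}]\,\bE[w^{\sigma}],
\]
where the second factor is finite by the classical perpetuity moment bound (verified inside Appendix \ref{AppRenewal}) and the first factor is $\leq C\,r(\sigma)^{n_2-n_1}$. By strict convexity of $\log r(\cdot)$ and $r'(0)=\lambda<0$ (Lemma \ref{Mainlemma}(c)), $r(\sigma)<1$ for every $\sigma\in(0,s)$, so the bound decays exponentially in $n_2-n_1$. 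Taking $n_2-n_1>\brC\ln t$ with $\brC$ sufficiently large produces any prescribed power $t^{-\brs}$, and in particular any $\brs>s$.

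Part (c) controls the subleading spectral corrections. Using the expansion $A_j\cdots A_{n+1}u=l_n(u)\lambda_j\cdots\lambda_{n+1}(1+O(\theta^{j-n}))v_j$ together with the contraction estimate \eqref{prodZeta}, one gets $\cR_n\leq C\sum_{j\geq n}\theta^{j-n}\lambda_j\cdots\lambda_{n+1}$ in norm. Its $\sigma$-moment is bounded by $\sum_{j\geq n}(\theta^{\sigma}r(\sigma))^{j-n}$, which converges for some $\sigma=\brs>s$ because $r(s)\theta^s<1$ and $r$ is continuous. Markov's inequality at such $\brs$ finishes (c). The main obstacle throughout is that $(\pi_n,v_n)$ and $\lambda_{n+1}$ are functions of overlapping pieces of the environment, so the Markov chain driving the recursion has state-dependent noise; this is precisely why the paper introduces the local approximations $l_{n,r}$, which reduce the process to one with asymptotically independent increments and make the renewal machinery of Appendix \ref{AppRenewal} applicable.
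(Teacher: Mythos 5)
Your proposal is correct in outline and, for the heart of the lemma (part (b)), follows essentially the same route as the paper: both reduce the tail asymptotics of the perpetuity $w_{n_1,n_2}$ to the renewal theorem of Appendix \ref{AppRenewal} (Theorem \ref{ThRenContr}, whose proof is indeed the Cram\'er tilt you describe, via Theorem \ref{ThLD}). The differences are in (a) and (c) and in how much of the hypothesis-checking is carried out. For (a) and (c) you give direct moment bounds: Chebyshev at an exponent $\sigma<s$ on the factorization $w_{n_1}-w_{n_1,n_2}=\lambda_{n_2+1}\cdots\lambda_{n_1+1}\,w_{n_2+1}$, and an $\brs$-moment bound on $\cR_n\leq C\sum_{j\geq n}\theta^{j-n}\lambda_j\cdots\lambda_{n+1}$ using $\theta^{\brs}r(\brs)<1$ for some $\brs>s$. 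Both are legitimate and arguably more elementary than the paper, which derives (a) from the large-deviation estimates inside the appendix and proves (c) by splitting $\cR_n$ into a deterministically small near part and a far part bounded by $Ct^{-\teps|\ln\theta|}w_n$, then invoking (b). Your arguments need the small extra care of conditioning on $(\zeta_{n_2+1},\pi_{n_2+1},v_{n_2+1})$ to decouple the product of $\lambda$'s from $w_{n_2+1}$ (they are not independent, only conditionally so), and the finiteness of $\bE[w^\sigma]$ for $\sigma<s$ should be obtained from the series representation \eqref{DefW} via $r(\sigma)<1$ rather than from the tail bound of (b), to avoid circularity; both fixes are routine. Where your sketch is thinnest is exactly where the paper's proof does its real work: verifying the hypotheses of Theorem \ref{ThRenContr} for the chain $\Phi_n=(\pi_n,v_n,\zeta_n)$ --- the contraction \eqref{EventualContr} (Birkhoff/Hilbert-metric contraction for $v_n$ and $\pi_n$, plus Proposition \ref{propD1} for the $\zeta$-component), the unboundedness condition \eqref{NoNest} (which follows from \eqref{defs} and strict convexity of $\ln r$), and above all the absence of solutions to the cohomological equation \eqref{SpCB}, which the paper extracts from \eqref{NA} by evaluating \eqref{SpCBMat} along constant-environment trajectories. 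Your remark that \eqref{NA} ``is the non-arithmeticity of $\tau$ under the tilted measure'' names the right idea but is not yet an argument; in a full write-up this reduction must be made explicit, since \eqref{NA} is a condition on single triples $(P,Q,R)$ while \eqref{SpCB} is a functional equation on the whole chain.
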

The proof of Lemma \ref{LmWRen} is given in Section \ref{tail}.
\section{Main Results\label{MR}.}
%In view of Theorem \ref{CLTsg2} in this paper we assume that
%$r(2)>1$ and hence there exists a unique $s\in (0,2)$ such that $r(s)=1.$

The description of the asymptotic behaviour of $T_N$ (defined by \eqref{TN1}) will be derived from the
asymptotic properties of traps. Our first main result describes these properties.
Let us introduce the exact definition of a trap.

\begin{definition} Let $M=M_N:=\ln\ln N$ and $\delta>0$ be a given (small) number, $w_n$ is defined by
\eqref{ExpectF10}. We say that $n$ is a {\it massive} site if $w_{n}\geq \delta
N^{1/s}$. A site $n\in[0,N-1]$ is {\it marked} if it is massive
and $w_{n+j}<\delta N^{1/s} $ for
$1\leq j\leq M.$
For $n$ marked the interval $[n-M, n]$ is called the {\it trap} (or
$\delta N^\frac{1}{s}$-trap) associated to $n.$
We call the number
$\mm_n=\sum_{j={n-M}}^n \rho_j $
the {\it mass} of the trap.
\end{definition}
Note that this definition implies that distinct traps are disjoint.

%Denote by $\tau:\Omega\mapsto\Omega$ the
%shift operator acting on $\Omega$ in a standard way: if $\omega=\{g_j\}_{j\in \mathbb{Z}}$, where $g_j:=(P_j,Q_j,R_j)$,
%then $\tau(\omega)=\{g_{j+1}\}_{j\in \mathbb{Z}}$.
The asymptotic distribution of traps is described by the following
\begin{theorem} \label{ThTraps}
 Assume that the non-arithmeticity condition \eqref{NA} holds. Then there exists
%a probability measure $\tnu$ on $\Omega$ and
a constant $\bc$ such that the point process
\begin{equation}\label{Poisson}
\left\{ \left(\frac{{n_j}}{N},\,\frac{\mm_{n_j}}{N^{1/s}}\right): %,\, \omega^{(j)}\right):
\ n_j \text{ is } \delta N^{1/s}\text{-marked and}\quad 0\leq n_j\leq N%,\ \omega^{(j)}=\tau^{n_j} \omega
\right\}
\end{equation}
converges as $N\to\infty$ to a Poisson process on $[0,1]\times [\delta, \infty)$%\times \Omega$
with the measure %$\bc s dt'\frac{dt}{t^{1+s}}d\tnu.$
$\bc s dt'\mu_\delta(dt)$, where $\mu_\delta$ converges to a measure with
density $\frac{\bc s}{t^{s+1}}$ as $\delta\to 0$.
\end{theorem}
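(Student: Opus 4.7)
My plan is to prove Poisson convergence by the method of factorial moments: for a finite family of rectangles in $[0,1]\times[\delta,\infty)$ with disjoint first-coordinate intervals, I will compute the joint factorial moments of the counts in the rectangles and verify that they factor as products, which by Kallenberg's theorem identifies the limit as a Poisson point process with the stated intensity. Two structural facts make this feasible. First, by construction any two marked sites differ by at least $M+1=\ln\ln N+1$, so no clumping can occur. Second, the marking condition and the mass both depend (up to negligible errors) only on the environment in a window of size $O(\ln N)$ around $n$; therefore pairs of marked sites at macroscopic distance involve essentially disjoint portions of $\omega$ and become asymptotically independent by the mixing estimates of Appendix \ref{ScIndBC}.

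\textbf{Reduction of $\mm_n$ to $w_n$.} To compute the one-point intensity I first express $\mm_n$ as an almost-local multiplicative functional of the environment times $w_n$. Iterating the recursion $w_{j-1}=(\pi_{j-1},v_{j-1})+\lambda_j w_j$ backward from $n$ yields $w_{n-k}=\lambda_n\cdots\lambda_{n-k+1}\,w_n+r_{n,k}$ with $r_{n,k}$ depending only on $\omega$ in $[n-M,n]$. Combining with $\rho_j=l_j(\bu_j)w_j+\cR_j$ and the local approximability of $l_j$, this gives $\mm_n=G_n\, w_n+(\text{error})$, where
\[
G_n=\sum_{k=0}^{M}l_{n-k}(\bu_{n-k})\,\lambda_{n-k+1}\cdots\lambda_n
\]
is measurable with respect to the environment in a window of polylogarithmic size around $n$. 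The error terms involve $\cR_j$ and the remainder $l_j-l_{j,\brC\ln N}$, and are controlled on the rare event $\{w_n\geq\delta N^{1/s}\}$ by Lemma \ref{LmWRen}(c), Lemma \ref{return}, and a union bound.

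\textbf{Intensity computation.} Conditioning on the past $\omega_{\leq n}$, which fixes $\zeta_{n-1}$, $\pi_n$, $v_n$, and (up to negligible error) $G_n$, I apply Lemma \ref{LmWRen}(b) to the event $\{w_n\geq tN^{1/s}\}$ intersected with the valley condition $\{w_{n+j}<\delta N^{1/s}\text{ for }1\leq j\leq M\}$; the conditional probability behaves asymptotically as $g_\delta(\omega_{\leq n})\,t^{-s}/N$, where the valley factor $g_\delta$ converges (by Lemma \ref{LmWRen}(a)) to the unconditional tail density coefficient as $\delta\to 0$, since a valley of length $M=\ln\ln N$ eventually becomes automatic. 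Averaging over $\omega_{\leq n}$ against $\mathbf{1}\{G_n w_n\in(t_1,t_2)N^{1/s}\}$ and applying the ergodic theorem to the stationary sequence, the expected number of marked sites with location in $[a,b]$ and mass in $[t_1,t_2]$ converges to $(b-a)\,\mu_\delta([t_1,t_2])$ for a measure $\mu_\delta$ whose density approaches $\bc s/t^{s+1}$ as $\delta\to 0$, yielding the claimed intensity $\bc s\,dt'\,\mu_\delta(dt)$.

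\textbf{Main obstacle.} The hardest step is making the reduction quantitative on the atypical event $\{w_n\geq\delta N^{1/s}\}$: one must show that the truncation errors (from dropping the subleading $\cR_j$ and from replacing $l_j$ by its local approximant) are uniformly $o(w_n)$ on this rare event, not merely in mean. Equally delicate is the interplay between the valley condition, which involves the future of the environment, and the tail of $w_n$ itself: since $w_n=(\pi_n,v_n)+\lambda_{n+1}w_{n+1}$, conditionally on $w_n\gg 1$ the distribution of $w_{n+1}$ is concentrated on values of order $w_n/\lambda_{n+1}$, and establishing that a valley of length $M$ survives requires a joint renewal argument in the spirit of Appendix \ref{AppRenewal}. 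The non-arithmeticity assumption \eqref{NA} enters precisely in this renewal step, which is what justifies the exact power-law asymptotics underlying the Poisson rate.
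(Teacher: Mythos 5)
Your fixed-$\delta$ argument is a viable alternative to the paper's route. The paper uses Bernstein's big-block/small-block scheme: it shows each big block of length $N^{\eps_3}$ contains at most one trap (via the two-point estimates of Lemma \ref{Lm2Clusters}), replaces $\mm_n$ by genuinely independent copies $\widetilde{\mm}_n$ obtained by restarting the chain $(\psi_n,\hat\pi_n)$ at the start of each big block (Proposition \ref{propD1}), and then invokes the Poisson limit theorem for independent vectors. Your factorial-moment computation would need the same decorrelation inputs, so the two approaches cost about the same. Your reduction $\mm_n=G_n w_n+\text{error}$ and the conditioning on $\omega_{\le n}$ match the paper's Corollary \ref{CrMassIntensity}, and you correctly flag that the joint event $\{w_n\ge \delta N^{1/s},\ w_{n+j}<\delta N^{1/s},\ 1\le j\le M\}$ needs a renewal argument (this is Lemma \ref{LmLEss}); note you must also prove that the limiting coefficient is not identically zero (Lemma \ref{LmFPos} in the paper, which requires a separate argument).

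The genuine gap is in the $\delta\to 0$ statement. You assert that the valley factor $g_\delta$ converges to the unconditional tail coefficient because ``a valley of length $M=\ln\ln N$ eventually becomes automatic,'' citing Lemma \ref{LmWRen}(a). This is false, and the citation is to a truncation estimate that has nothing to do with valleys. By Lemma \ref{Lm2Clusters}(a), $\bP(w_n\ge t,\,w_{n+k}\ge t)\le C\beta^k t^{-s}$, so the probability that a massive point fails the valley condition is of the \emph{same} order $t^{-s}$ as the probability of being massive; the limit $\brf=\lim_l f_l$ of Lemma \ref{LmLEss} is strictly smaller than the unconditional coefficient for every $\delta$, and the valley condition never becomes free. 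The actual mechanism behind $\mu_\delta([t,\infty))\to \bc t^{-s}$ is different: the backward aggregation factor $G_n$ (the paper's $a_{n-M,n}$) itself has a power-law tail $\bP(a_{n-M,n}\ge z)\sim\hc z^{-s}$, established by the $z^-$ part of the renewal theorem (Lemma \ref{LmARen}), and the contribution to $\{\mm_n>tN^{1/s}\}$ from marked points with $w_n$ barely above $\delta N^{1/s}$ but $G_n\gtrsim t/\delta$ is of order $\delta^{-s}\cdot(\delta/t)^s=t^{-s}$, i.e.\ $\delta$-independent; Corollary \ref{CrMassIntensity}(b) integrates the two regimes via the joint asymptotics of Lemma \ref{LmARen}(c). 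Your plan treats $G_n$ as a local functional to be averaged against the ergodic theorem and therefore cannot produce the correct $\delta\to0$ limit: you need a second renewal theorem for the backward products, which is absent from your outline.
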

\begin{remarks} 1. Each component of the point process \eqref{Poisson} is
itself a point process converging to a Poisson process.

2. Theorem \ref{ThTraps} extends
certain statements from \cite{DG} (see Lemma 4.4 there) to the case of the walk on a strip.
It may be worth mentioning that in \cite{DG} we used the term \textit{cluster} for what we have
now decide to call a \textit{trap};
the latter term seems to better reflect the most essential properties of this object.
%However, the existence of $\tnu$ is a new statement even in the case of the SRWRE.

3. The measure $\mu_\delta$ will be described in more explicit terms later. However,
its explicit description depends on the choice of the definition of a trap and
is only important because it helps to find the limit of $\mu_\delta$
as $\delta\to 0$.
\end{remarks}

The above Theorem plays a major role in the description of the asymptotic behaviour
of the walk in the subdiffusive regime $s\in(0,2)$.
To state our second main result, we define $ \mathfrak{t}_N$ which is a
normalized version of $T_N$, namely we set
\begin{equation}\label{tN}
\mathfrak{t}_N=\begin{cases} \frac{T_N}{N^{1/s}} & \text{if } 0<s<1, \cr
\frac{T_N-\EXP_\omega(T_N)}{N^{1/s}} & \text{if } 1\le s<2. \cr
%\frac{T_N-\EXP_\omega(T_N)}{\sqrt{N\ln N}} & \text{if } s=2. \cr
\end{cases}
\end{equation}
The definition of $ \mathfrak{t}_N$ implies that complete understanding of its asymptotic
properties should include the study of those of $\mathbb{E}_{\omega}(T_N)$.
The corresponding normalized quantity is defined as follows:
\begin{equation} \label{ExpectationT}
\mathfrak{u}_N=\begin{cases} \frac{\EXP_\omega(T_N)}{N^{1/s}} & \text{if } 0<s<1, \cr
\frac{\EXP_\omega(T_N)-u_N}{N} & \text{if } s=1, \cr
\frac{\EXP_\omega(T_N)-\mathrm{E}(T_N)}{N^{1/s}} & \text{if } 1< s<2, \cr
%\frac{\EXP_\omega(T_N)-\bE(T_N)}{\sqrt{N\ln N}} & \text{if } s=2, \cr
\end{cases}
\end{equation}
where $u_N=N\mathbf{E}(\rho_n I_{\rho_n<x_N})$ with $x_N$ defined by
$\mathbf{P}\left\{\rho_n>x_N\right\}=N^{-1}$.

To state our next theorem, we set
\begin{equation}\label{Poisson1}\Theta^{(N,\delta)}=
\left\{ \Theta^{(N,\delta)}_j:=\frac{\mm_{n_j}}{N^{1/s}}\,:
0\leq n_j\leq N\quad \text{and } n_j \text{ is } \delta N^{1/s}\text{-marked}\right\}
\end{equation}
\begin{theorem}
\label{ThMain}
Assume that the non-arithmeticity condition \eqref{NA} holds. Then for $0<s<2$ and a $\delta>0$ there is a sequence
$\Omega_{N,\delta}\subset\Omega$ such that
$\lim_{N\to\infty}\bP(\Omega_{N,\delta})=1$ and a sequence of random point processes
$$(\Theta^{(N,\delta)},\Gamma^{(N,\delta)})= \left(\left\{\Theta^{(N,\delta)}_j,\Gamma^{(N,\delta)}_j\right\}\right)$$
%\Omega\times\mathfrak{X}\mapsto \mathfrak{F}_\delta\times \tilde{\mathfrak{F}},$$
such that

%\noindent(i) The component $\Theta^{(N,\delta)}$ depends only on $\omega$ and converges weakly to
%a point Poisson process $\Theta^{(\delta)}$ on $[\delta,\infty)$ with intensity
%$\frac{\brc}{\theta^{1+s}}$ (with some constant $\brc>0$).

\noindent (i) The component $\Gamma^{(N,\delta)}=\{\Gamma^{(N,\delta)}_j\}$ is a collection
of asymptotically i.i.d. random variables with mean 1 exponential distribution.
% which also are asymptotically independent of the environment.

\noindent (ii) The $\mathfrak{t}_N$ and $\mathfrak{u}_N$ can be presented in the following form:

\noindent (a) If\, $0<s<1$ then for $\omega\in\Omega_{N,\delta}$
\begin{equation}\label{Th2a}
\mathfrak{t}_N=\sum_{j}\Theta_j^{(N,\delta)}\Gamma_j^{(N,\delta)} + R_N,\
\ \text{where}\ \  R_N\ge 0 \ \ \text{and}\ \ \bE(1_{\Omega_{N,\delta}}R_N)=\cO(\delta^{1-s}),
\end{equation}
$$\mathfrak{u}_N=\sum_{j}\Theta_j^{(N,\delta)} + \hR_N, \ \hbox{ where }
\hR_N\ge0, \ \bE(\hR_N)=\cO(\delta^{1-s}).$$

\noindent (b) If $s=1$ then for $\omega\in\Omega_{N,\delta}$ and a given $1/2<\kappa<1$
$$\mathfrak{t}_N=\sum_{j}\Theta_j^{(N,\delta)}(\Gamma_j^{(N,\delta)}-1) + R_N,\ \ \text{where}\ \  \bE\left[1_{\Omega_{N,\delta}}\mathbb{E}_{\omega}(R_N^2)\right]^\kappa=\cO(\delta^{2\kappa-1}),$$
$$\mathfrak{u}_N=\sum_{j}\Theta_j^{(N,\delta)} - \brc|\ln\delta|+ \hR_N,\ \ \text{where}\ \
\bE(|\hR_N|^{2})=\cO(\delta).$$

\noindent(c) If $1< s<2$ then for $\omega\in\Omega_{N,\delta}$
$$\mathfrak{t}_N=\sum_{j}\Theta_j^{(N,\delta)}(\Gamma_j^{(N,\delta)}-1) + R_N,\ \ \text{where}\ \  \bE\left[1_{\Omega_{N,\delta}}\mathbb{E}_{\omega}(R_N^2)\right]=\cO(\delta^{2-s}),$$
$$\mathfrak{u}_N=\sum_{j}\Theta_j^{(N,\delta)}-\frac{\brc}{(s-1)\delta^{s-1}} + \hR_N,\ \ \text{where}\ \
\bE(\hR_N^2)=\cO(\delta^{2-s}).$$
\end{theorem}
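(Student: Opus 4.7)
The strategy is to localize $T_N$ on the $\delta N^{1/s}$-traps from Theorem \ref{ThTraps}, which carry the dominant contribution. Writing $T_N^{(L)}(n)=\#\{t\colon X_t\in L_n\}$, I decompose $T_N = T_N^{\mathrm{tr}} + T_N^{\mathrm{rest}}$ with $T_N^{\mathrm{tr}}=\sum_{j} T^{(j)}$ and $T^{(j)}=\sum_{n=n_j-M}^{n_j} T_N^{(L)}(n)$ the time spent in the $j$-th trap. The task is then to show that $T^{(j)}\approx \mm_{n_j}\Gamma_j$ with $\Gamma_j$ asymptotically i.i.d.\ $\mathrm{Exp}(1)$, and to estimate the conditional mean and fluctuations of $T_N^{\mathrm{rest}}$ in each regime. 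The decomposition and these three steps correspond exactly to the two displayed identities in (a), (b), (c).

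\textbf{Trap analysis.} Given a marked $n_j$, the walk enters $[n_j-M,n_j]$ through its right boundary $L_{n_j}$; each entry produces an excursion that ends either by another crossing of $L_{n_j}$ (a return) or by a permanent departure to the right. Since $\mathbb{E}_\omega T^{(j)}=\mm_{n_j}$ is of order $\delta N^{1/s}$ by Lemma \ref{ExpectedOT}, the number $\nu_j$ of returns is geometric with success probability $\cO(1/\mm_{n_j})$, and the individual excursion lengths $\tau_j^{(r)}$ have mean at most $\cO(N^{\eps})\ll \mm_{n_j}$ since on non-trap layers $\rho_n<\delta N^{1/s}$ by construction. Writing $T^{(j)}=\sum_{r=1}^{\nu_j+1}\tau_j^{(r)}$ and applying a law of large numbers to the $\tau_j^{(r)}$ together with the classical exponential limit of rescaled geometric sums yields $\Gamma_j:=T^{(j)}/\mm_{n_j}\Rightarrow\mathrm{Exp}(1)$. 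The main analytic inputs are the contraction properties of $\zeta_n,A_n$ from Section \ref{s1.3} and the mixing bounds of Appendix \ref{ScIndBC}, which guarantee that the excursion distribution inside the trap is stable under the conditioning ``the walk does not escape permanently''.

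\textbf{Independence and residual term.} By the marking rule consecutive $n_j$ differ by at least $M=\ln\ln N$, and since by Lemma \ref{LmWRen} the number of massive sites in $[0,N-1]$ is $\cO(\delta^{-s})$ with high $\bP$-probability, the typical gap between traps is of order $N\delta^s$; Lemma \ref{return} then gives a rapidly decaying probability for the walk to return from trap $j+1$ to the region of trap $j$. This, combined with the independence in \eqref{EqC1} of the environments on disjoint trap intervals, produces the asymptotic independence of the $\Gamma_j$. For $T_N^{\mathrm{rest}}$ its conditional mean is $\sum_{n\text{ non-trap}}\rho_n$; the sharp tail $\bP(\rho_n>t)\sim ct^{-s}$ from Lemma \ref{LmWRen} yields truncated moments $\mathbf{E}(\rho_n\mathbf{1}_{\rho_n<\delta N^{1/s}})$ of order $(\delta N^{1/s})^{1-s}$ for $s<1$, a logarithmic expression for $s=1$, and $\mathbf{E}\rho_n-\cO((\delta N^{1/s})^{1-s})$ for $1<s<2$; summing over $\cO(N)$ layers and normalizing produces the stated bounds on $\hat R_N$ (including the $\brc|\ln\delta|$ and $\brc/((s-1)\delta^{s-1})$ corrections). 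The bound on $R_N$ follows from controlling the annealed second moment (or the $L^\kappa$-norm for $s=1$) of $T_N^{\mathrm{rest}}$, using the $L^2$-orthogonality of contributions from well-separated layers, provided once more by Lemma \ref{return} and Appendix \ref{ScIndBC}, together with the truncated second moment $\mathbf{E}(\rho_n^2\mathbf{1}_{\rho_n<\delta N^{1/s}})=\cO((\delta N^{1/s})^{2-s})$.

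\textbf{Main obstacle.} The hardest step is the joint justification of the exponential limit for the $\Gamma_j$ and their approximate independence, because each excursion time depends on the environment in the whole trap and its immediate vicinity, and the random location of traps entangles the walk with the environment. Here the arguments of \cite{DG} for the simple random walk must be adapted to the matrix-valued contractions $\zeta_n,A_n$ of the strip model, using the transfer-operator structure of Section \ref{s1.3}. The exceptional set $\Omega_{N,\delta}$ has to be constructed to exclude atypical configurations, such as anomalous trap profiles or excessively close traps, while still satisfying $\bP(\Omega_{N,\delta})\to 1$; this requires a careful balance between the thresholds in the definition of a trap and the quantitative mixing rates.
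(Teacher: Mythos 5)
Your overall architecture matches the paper's: split $T_N$ into the trap contribution plus a remainder, control the remainder via truncated moments of $\rho_n$ together with decorrelation of $\xi_{n_1},\xi_{n_2}$ at distance $\gtrsim \ln N$ (this is exactly the paper's Lemma \ref{LmLow}, proved via Lemma \ref{return}), kill the massive-but-unmarked sites by Lemma \ref{non-cluster}, and get independence of the $\Gamma_j$ from the big-block/small-block structure of the proof of Theorem \ref{ThTraps}. Those parts of your plan are sound and essentially identical to the paper's.

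The genuine gap is in your derivation of the exponential limit for $\Gamma_j$. You claim that the number $\nu_j$ of returns of the walk into the trap across the layer $L_{n_j}$ is geometric. On a strip with $m>1$ this is false: the successive return positions within a layer form a killed Markov chain on $\{1,\dots,m\}$, so the crossing count is a Markov-modulated quantity, not a geometric random variable (this is precisely the difficulty, emphasized in the Introduction, that distinguishes the strip from the simple walk, where every crossing necessarily passes through the unique site of the layer). Making your excursion argument rigorous would require a quasi-stationarity/Perron--Frobenius analysis of that killed chain. Moreover, your justification that individual excursion lengths are $o(\mm_{n_j})$ (``since on non-trap layers $\rho_n<\delta N^{1/s}$'') is irrelevant: the excursions in question live \emph{inside} the trap, where the expected occupation times are large. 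The paper circumvents both problems by anchoring the argument at a single site: for a marked $n$ one picks $k$ with $\rho_{n,k}\ge m^{-1}\delta N^{1/s}$; the occupation number $\xi_{n,k}$ of a single state, conditioned on being positive, is \emph{exactly} geometric, and $\mathbb{P}_\omega(\xi_{n,k}=0)\to 0$ by Lemma \ref{LemmaA3}, so $\xi_{n,k}/\rho_{n,k}$ converges to a mean-one exponential. The whole trap contribution is then reduced to this single ratio by Corollary \ref{CorA2} (high correlation of occupation numbers of nearby states of a transient chain), which shows $\bigl\|\xi_{n-j,i}/\rho_{n-j,i}-\xi_{n,k}/\rho_{n,k}\bigr\|_{L^2}\le C/\sqrt{\rho}$ and hence that $\sum_{j\le M}\xi_{n-j}=(\xi_{n,k}/\rho_{n,k}+o(1))\,\mm_n$. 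Without this device (or a worked-out substitute for the layer-crossing chain), your proof of part (i) and of the identities in (ii) does not go through.
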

\begin{remarks} 1. Theorem \ref{ThMain} was proven in \cite{DG} for SRWRE.
%\cite{DG} except that \cite{DG} deals only with the SRWRE on the line.
The next two remarks are similar to those  following Theorem 2 in \cite{DG};
we nevertheless believe that they are worth of being repeated.

2. The estimates of the remainders in the statements of Theorem \ref{ThMain}
hold for all $\delta>0 $ but are not uniform in $N$. More precisely,
e. g. the relation $\bE(|\hR_N|^{2})=\cO(\delta)$
in (b) means that for any $\delta>0$ there is $N_\delta$ and a constant $C$ (which does not depend on $\delta$) such that
$\bE(|\hR_N|^{2})\le C\delta$ if $N>N_\delta$.

3. The dependence of $\Theta^{(N,\delta)}$ on $\omega$ persists as $N\to\infty$
whereas  $\Gamma^{(N,\delta)}$ becomes ``almost" independent of $\omega$. More precisely, for
$K\gg 1$ and sufficiently large $N$ the events $B_k^N:=\{|\Theta^{(N,\delta)}|=k\}$, $0\le k\le K$,
form, up to a set of a small probability, a partition of $\Omega$.
If $\omega\in B_k^N$ then $\Gamma^{(N,\delta)}\equiv \Gamma^{(N,\delta)}(\omega, X)$ is a collection
of $k$ random variables which converge weakly as $N\to\infty$ to a collection of $k$ i.i.d. standard
exponential random variables.
Thus the only dependence of $\Gamma^{(N,\delta)}(\omega, X)$ on $\omega$ and $\delta$ which persists
as $N\to\infty$ is reflected by the fact that
$|\Theta^{(N,\delta)}|=|\Gamma^{(N,\delta)}|$. (Remember that $X$ is the trajectory
of the walk and the purpose of our notation is to emphasize the dependence of
$\Gamma^{(N,\delta)}$ on both $\omega$ and $X$.)
\end{remarks}

The following statement is a corollary of Theorem \ref{ThMain}. In the case of the SRWRE, Theorem
\ref{ThAnn} is one of the main results of \cite{KKS}.
\begin{theorem}
\label{ThAnn}
The annealed walk has the following properties:
\newline
(a) If $s<1$ then the distribution of $\frac{{T}_N}{N^{1/s}}$ converges to a stable law with index $s.$
\newline
(b) If $1<s<2$ then there is a constant $u$ such that the distribution of
$\frac{{T}_N-N u}{N^{1/s}}$ converges to a stable law with index $s.$
\newline
(c) If $s=1$ then there is a sequence $u_N\sim c N \ln N$ (defined as in \eqref{ExpectationT})
such that the distribution of
$\frac{{T}_N-u_N}{N}$ converges to a stable law with index $1.$
\end{theorem}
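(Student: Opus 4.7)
The plan is to deduce the annealed limit theorem by averaging the quenched representation from Theorem \ref{ThMain} over the environment, combining the Poisson structure of trap masses from Theorem \ref{ThTraps} with the classical representation of stable laws as integrals of a Poisson point process (recalled in Appendix \ref{SSPP}).

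First I would fix $\delta>0$ and pass to the joint annealed distributional limit of the pair $(\Theta^{(N,\delta)},\Gamma^{(N,\delta)})$. By Theorem \ref{ThTraps}, the point process of trap masses $\Theta^{(N,\delta)}$ converges to a Poisson point process $(\theta_j)$ on $[\delta,\infty)$ with intensity $\mu_\delta$, and $\mu_\delta \to \bc s\, t^{-s-1}\, dt$ as $\delta\to 0$. By Theorem \ref{ThMain}(i), the companion exponentials $\Gamma^{(N,\delta)}$ are asymptotically i.i.d.\ $\mathrm{Exp}(1)$ and, since conditionally on $\omega$ they are generated by the walk rather than by the environment, in the annealed limit they are independent of $(\theta_j)$. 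Substituting these limits into the identities of Theorem \ref{ThMain}(ii) identifies, for fixed $\delta>0$, the annealed limits of $\mathfrak{t}_N$ and $\mathfrak{t}_N+\mathfrak{u}_N$ with Poisson integrals of the form $\sum_j \theta_j\gamma_j$ (case $s<1$) or $\sum_j \theta_j(\gamma_j-1)$ (cases $s=1$ and $1<s<2$) up to explicit deterministic shifts. A direct characteristic-function calculation via the Campbell formula then shows that these Poisson integrals have stable laws of index $s$.

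The final step is to send $\delta\to 0$. For $0<s<1$ this is immediate from the remainder bound $\bE(1_{\Omega_{N,\delta}}R_N)=\cO(\delta^{1-s})$ in Theorem \ref{ThMain}(ii)(a) combined with $\mu_\delta\to \bc s\,t^{-s-1}dt$, yielding convergence of $T_N/N^{1/s}$ to a one-sided stable law. For $1<s<2$ one forms $\mathfrak{t}_N+\mathfrak{u}_N=(T_N-\mathrm{E}(T_N))/N^{1/s}$: the divergent centering $\brc/((s-1)\delta^{s-1})$ in $\mathfrak{u}_N$ precisely cancels the centering of the truncated Poisson integral $\sum_{\theta_j>\delta}\theta_j(\gamma_j-1)$, and the $L^2$ remainder estimates make the passage quantitative, with $u$ identified as $\bE(\rho_0)$.

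The main obstacle is the borderline case $s=1$, where $\bE(\rho_0)=\infty$ so one must work with the truncation $u_N=N\bE(\rho_n I_{\rho_n<x_N})$; using the tail $\bP(\rho_n>t)\sim Ct^{-1}$ from Lemma \ref{LmWRen}(b) one shows $u_N\sim cN\ln N$. The logarithmic centering $-\brc|\ln\delta|$ appearing in the expansion of $\mathfrak{u}_N$ must be matched against the logarithmic centering of the index-$1$ stable Poisson integral as the truncation is removed, and the weaker remainder bound (involving $\kappa>1/2$ rather than a full second moment) forces the $\delta\to 0$ passage to be performed with additional care. This matching of the two logarithmic divergences, rather than any individual stable-law calculation, is the hardest bookkeeping step.
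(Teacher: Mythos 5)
Your proposal follows exactly the route the paper intends: the paper omits the proof precisely because Theorem \ref{ThAnn} is meant to be read off from Theorems \ref{ThTraps} and \ref{ThMain} together with Lemma \ref{LmPT} (independence and mapping of Poisson processes) and Lemma \ref{LmPSt} (Poisson integrals with intensity $\theta^{-(1+s)}$ give stable laws, including the divergent centerings $\frac{1}{(s-1)\delta^{s-1}}$ and $|\ln\delta|$ that you match against the shifts in $\mathfrak{u}_N$). Your Campbell-formula computation is just the proof of Lemma \ref{LmPSt}, so the argument is correct and essentially identical to the paper's.
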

The proof of this theorem will not be given because its derivation from Theorems \ref{ThTraps}
and \ref{ThMain} is easy (cf Lemma \ref{LmPSt} in Appendix \ref{SSPP}) and also was carried out in \cite{DG}.

\section{The asymptotic properties of traps and roof of Theorem \ref{ThTraps}.}\label{proofTh5}
\subsection{Auxiliary Lemmas. }The proof of Theorem \ref{ThTraps} requires understanding
of the asymptotic behaviour of traps. The following 5 lemmas describe the properties of
traps that shall be used in the sequel.

We start with $w_n$ defined by \eqref{DefW}. Observe that we have
\begin{equation}
\label{WRec}
w_n=\lambda_n w_{n+1}+(\pi_n, v_{n}).
\end{equation}

\begin{lemma}
\label{Lm2Clusters}
There exist $\eps_1>0, \eps_2>0, 0<\beta<1$ and $C>0$ such that for any $(\brzeta, \brpi, \brv)$
 we have

 (a) If $k\leq\eps_1 \ln t$ then
$$ \bP(w_n\geq t, w_{n+k}\geq t|\zeta_n=\brzeta, \pi_n=\brpi, v_n=\brv)
\leq
\frac{C\beta^k}{t^s}; $$
(b) If $k\geq\eps_1 \ln N$ then
$$ \bP(w_n\geq t, w_{n+k}\geq t|\zeta_n=\brzeta, \pi_n=\brpi, v_n=\brv)\leq
C t^{-s(\eps_2+1)}. $$
\end{lemma}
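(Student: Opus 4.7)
The plan is to iterate the recursion \eqref{WRec} and write $w_n = S_k + \Pi_k w_{n+k}$, where $\Pi_k := \prod_{j=n+1}^{n+k}\lambda_j$ and $S_k := \sum_{j=n}^{n+k-1}\bigl(\prod_{i=n+1}^{j}\lambda_i\bigr)(\pi_j,v_j)$ are both $\cF_{[n,n+k-1]}$-measurable, while $w_{n+k}$ depends only on the environment from layer $n+k$ onward. Since $r(s)=1=r(0)$ and $r'(0)=\lambda<0$, convexity of $\ln r$ produces some $\alpha\in(0,s)$ with $r(\alpha)<1$. Fix such an $\alpha$, pick $\beta\in(r(\alpha)^{1/\alpha},1)$, and decompose according to the size of $\Pi_k$: let $E_1=\{\Pi_k>\beta^k\}$ and $E_2=\{\Pi_k\leq\beta^k\}$.

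On $E_1$ I would use the crude inclusion $\{w_n\geq t,\,w_{n+k}\geq t\}\subseteq\{w_{n+k}\geq t\}$ together with the uniform tail bound $\bP(w_{n+k}\geq t\mid\cF_{[n,n+k-1]})\leq Ct^{-s}$ coming from Lemma \ref{LmWRen}(b). Since $\Pi_k\leq\|A_{n+k}\cdots A_{n+1}\|$, Markov's inequality and Lemma \ref{Mainlemma} give $\bP(E_1\mid\brzeta,\brpi,\brv)\leq\beta^{-\alpha k}\bE\|A_{n+k}\cdots A_{n+1}\|^\alpha\leq C\gamma_1^k$ with $\gamma_1:=r(\alpha)/\beta^\alpha<1$, so the $E_1$-contribution is $\leq C\gamma_1^k/t^s$. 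On $E_2$ the event $\{w_n\geq t\}$ forces either $S_k\geq t/2$ or $w_{n+k}\geq t/(2\beta^k)$. In the first sub-case, combining $\{w_{n+k}\geq t\}$ with $S_k\leq w_n$ and Lemma \ref{LmWRen}(b) yields a probability of order $t^{-2s}$; in the second sub-case, integrating $\bP(w_{n+k}\geq t/(2\Pi_k)\mid\cF)\leq C(2\Pi_k/t)^s$ against $1_{E_2}$ gives $\leq C\bE[\Pi_k^s 1_{E_2}]/t^s\leq C\beta^{sk}/t^s$.

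For part (a), choose $\eps_1<s/|\ln\beta|$ so that $t^{-s}\leq\beta^k$ whenever $k\leq\eps_1\ln t$; the spurious $t^{-2s}$ term is then dominated by $\beta^k/t^s$, and with $\beta_*:=\max(\gamma_1,\beta^s,\beta)<1$ one obtains the claimed bound $C\beta_*^k/t^s$. For part (b), in the applications one has $t\asymp N^{1/s}$, so the hypothesis $k\geq\eps_1\ln N$ translates into $k\geq c\ln t$; under this condition each of $\gamma_1^k$ and $\beta^{sk}$ contributes an extra $t^{-s\eps_2}$ factor for an appropriate $\eps_2>0$, yielding the $Ct^{-s(1+\eps_2)}$ bound.

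The hard part will be justifying a genuinely \emph{uniform} tail estimate $\bP(w_{n+k}\geq u\mid\cF_{[n,n+k-1]})\leq Cu^{-s}$, independent of the random state $(\zeta_{n+k-1},\pi_{n+k-1},v_{n+k-1})$. Lemma \ref{LmWRen}(b) supplies only the pointwise asymptotic $\sim f(\zeta,\pi,v)u^{-s}$, so one has to verify, via the renewal-theoretic input of Appendix \ref{AppRenewal} and the compactness of the state space implied by \eqref{EqC2*}, that $f$ is bounded and that the associated majorant holds uniformly in the parameters. Once this uniform tail is in hand, the remainder of the argument is a routine two-case split on the size of $\Pi_k$.
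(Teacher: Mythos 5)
Your argument is correct and is organized rather differently from the paper's. The paper treats (a) and the two sub-ranges of (b) by three separate arguments: for (a) it iterates \eqref{WRec} to get $w_n=\lambda_n\cdots\lambda_{n+k-1}w_{n+k}+O(K^k)$ and claims the event forces $\lambda_n\cdots\lambda_{n+k-1}\ge 1/2$ (which, as written, ignores the possibility that the product is small while $w_{n+k}$ is very large), then bounds $\brbP(\lambda_n\cdots\lambda_{n+k-1}\ge 1/2)\le C\beta^k$; for (b) with $k>\brC\ln t$ it uses the truncation $w_{n,n+k-1}$ and Lemma \ref{LmWRen}(a) to decouple the two tails and get $t^{-2s}$; for intermediate $k$ it uses a large-deviation bound on $\brbP(\lambda_n\cdots\lambda_{n+k-1}\ge t^{-\teps/2})$. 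Your single decomposition $w_n=S_k+\Pi_k w_{n+k}$ with the threshold $\Pi_k\lessgtr\beta^k$ yields the uniform estimate $C(\gamma_1^k+\beta^{sk})t^{-s}+Ct^{-2s}$ for all $k$, from which both (a) (using $k\le\eps_1\ln t$ to absorb $t^{-2s}$ into $\beta_*^k t^{-s}$) and (b) (using $k\gtrsim\ln t$ to extract the extra factor $t^{-s\eps_2}$) follow; this is cleaner and in fact repairs the logical gap in the paper's case (a) by explicitly handling $\Pi_k$ small via the terms $\beta^{sk}t^{-s}$ and $t^{-2s}$. The one ingredient you flag as needing justification --- the uniform conditional tail bound $\bP(w_{n+k}\ge u\mid\zeta_{n+k},\pi_{n+k},v_{n+k})\le Cu^{-s}$ rather than the pointwise asymptotic of Lemma \ref{LmWRen}(b) --- is equally needed (and used only implicitly) in the paper's proof, where the step $\brbE(1_{\cdots}f(\zeta_{n+k},\pi_{n+k},v_{n+k}))/t^s\le C\brbP(\cdots)/t^s$ presupposes that $f$ is bounded and that the asymptotic is a genuine majorant for all $t$; this uniformity is supplied by the renewal analysis of Appendix \ref{AppRenewal} together with compactness of the state space (cf.\ \eqref{RhoAr}), so flagging it rather than proving it is acceptable here. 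Two cosmetic points: your choice of $\eps_1$ should be measured against $|\ln\beta_*|$ rather than $|\ln\beta|$ (harmless, since $\beta\le\beta_*$), and the conditional moment bound $\bE(\|A_{n+k}\cdots A_{n+1}\|^\alpha\mid\zeta_n=\brzeta)\le C r(\alpha)^k$ deserves a word, though ellipticity makes it routine.
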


\begin{proof}
For brevity we shall denote $\brbP=\bP(\cdot|\zeta_n=\brzeta, \pi_n=\brpi, v_n=\brv).$

(a) From \eqref{WRec} we have
$$ w_{n}=\lambda_{n}\dots \lambda_{n+k-1}+O(K^{k})  .$$
So if $K^k\ll t $ then
\begin{align*}
\brbP(w_n\geq t, w_{n+k}\geq t) & \leq &
\brbE(1_{\lambda_n \dots \lambda_{n+k-1}\geq 1/2} P_{\zeta_{n+k}, \pi_{n+k}, v_{n+k}}(w_{n+k} \geq t)) \\
& \leq &
\frac{\brbE(1_{\lambda_n \dots \lambda_{n+k-1}\geq 1/2} f(\zeta_{n+k}, \pi_{n+k}, v_{n+k}))}{t^s} \\
& \leq &
\frac{C \brbP(\lambda_n \dots \lambda_{n+k-1}\geq 1/2) }{t^s} \leq \frac{C\beta^k}{t^s}.
\end{align*}
(b) Consider two cases

(I) $k>\brC\ln t$ where $\brC$ is the constant from Lemma \ref{LmWRen}. Then
$$ \brbP(w_n\geq t, w_{n+k}\geq t)
\leq \brbP(w_{n, n-k-1}\geq t-1, w_{n+k}\geq t)+\brbP(w_n-w_{n, n-k-1}>1). $$
The second term is $O(t^{-\brs})$ while the first term equals to
$$ \brbE(1_{w_{n, n+k-1}\geq t-1} \bP_{\zeta_{n-k}, \pi_{n-k}, v_{n-k}} w_{n+k}\geq t)
\leq \frac{C}{t^s} \brbP(w_{n, n+k-1}\geq t-1)\leq \frac{C}{t^{2s}}. $$
(II) $\eps_1 \ln t<k\leq \brC \ln t.$ Fix $\teps\ll 1.$ Then
$$ \brbP(w_n\geq t, w_{n+k}\geq t)\leq \brbP(w_{n+k}\geq t^{1+\teps})+
\brbP(w_n\geq t, t\leq w_{n+k}\leq t^{1+\teps}). $$
The first term is $O(t^{-(1+\teps)s})$ while the second term is less than
$$ \brbP(\lambda_n\dots \lambda_{n+k-1}\geq t^{-\teps/2}, w_{n+k}\geq t)+
\brC\ln t \max_{1<\leq j<k} \brbP(\lambda_n \dots \lambda_{n+j}\geq t^{1-\teps}, w_{n+k}\geq t). $$
Both terms are estimated in the same way so we only discuss the first one
$$  \brbP(\lambda_n\dots \lambda_{n+k-1}\geq t^{-\teps/2}, w_{n+k}\geq t)\leq
\brbE(1_{\lambda_n \dots \lambda_{n+k-1}\geq t^{\teps/2}} \bP_{\zeta_{n-k}, \pi_{n-k}, v_{n-k}} (w_{n+k}\geq t) $$
$$\leq \frac{C}{t^s} \brbP(\lambda_n \dots \lambda_{n+k-1}\geq t^{\teps/2})
\leq C t^{s(1+\eps_2)} $$
as claimed.
\end{proof}

It may happen that not all massive sites belong to one of the clusters.
This situation is controlled by the following
\begin{lemma}\label{non-cluster}
There is $\beta<1$ such that for $n\in[0,N-1]$
\begin{equation}
\label{Eq4Peaks}
\bP\left(\rho_{n}\geq \delta
N^{1/s} \hbox{ and $n$ is not in a trap}  \right)\le \Const \frac{\beta^M}{N} .
\end{equation}
\end{lemma}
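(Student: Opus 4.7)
My plan is to reduce the event to one involving the stationary tail variables $w_n$ and then invoke the two-peak estimate of Lemma \ref{Lm2Clusters}(a). First, write $\rho_n = l_n(\bu_n) w_n + \cR_n$. Since $l_n(\bu_n)$ is uniformly bounded above by a constant $C_0$ under \eqref{EqC2*}, the event $\{\rho_n \geq \delta N^{1/s}\}$ is contained in
$$\{w_n \geq (\delta/2C_0) N^{1/s}\} \cup \{\cR_n \geq (\delta/2) N^{1/s}\}.$$
By Lemma \ref{LmWRen}(c), the second event has probability $O(N^{-\brs/s})$ with $\brs>s$, which is $o(\beta^M/N)$ and hence contributes negligibly. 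It therefore suffices to bound the probability that $w_n \geq c\delta N^{1/s}$ and $n$ is not in any trap, where $c = 1/(2C_0)$; relabelling $\delta$ absorbs $c$ into the implicit constants.

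Next I would carry out the main geometric step: if $w_n \geq \delta N^{1/s}$ and $n$ is not in any trap, then there must exist a second massive site $n^{**}$ with $n^{**} - n \in (M, 2M]$. Indeed, let
$$n^* := \max\{k : n \leq k \leq n+M,\ w_k \geq \delta N^{1/s}\},$$
which exists since $n$ itself qualifies. If $n^*$ were marked, the associated trap $[n^* - M, n^*]$ would contain $n$, contradicting the assumption. Hence $n^*$ is massive but not marked, so some $j \in [1, M]$ satisfies $w_{n^*+j} \geq \delta N^{1/s}$; setting $n^{**} := n^* + j$ gives a massive site with $n^{**} > n + M$ (by maximality of $n^*$) and $n^{**} \leq n + 2M$.

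To conclude I would apply Lemma \ref{Lm2Clusters}(a) with $t = \delta N^{1/s}$ and $k = n^{**} - n$. Since $M = \ln\ln N$ while $\eps_1 \ln t$ is of order $\ln N$, the hypothesis $k \leq \eps_1 \ln t$ holds for $N$ large, yielding
$$\bP(w_n \geq t,\, w_{n+k} \geq t) \leq \frac{C\beta^k}{t^s} \leq \frac{C\beta^M}{\delta^s N},$$
uniformly in the conditioning variables $(\brzeta, \brpi, \brv)$. A union bound over the at most $M$ admissible positions $n^{**} \in (n+M, n+2M]$ then yields $CM\beta^M/(\delta^s N)$, and since $M\beta^M \leq (\beta')^M$ for any fixed $\beta' \in (\beta, 1)$ and $N$ large enough, after renaming $\beta$ we obtain the desired bound $C\beta^M/N$. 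The only delicate point is the geometric observation pinning down a second massive site strictly more than $M$ to the right of $n$; the remaining probabilistic estimate is an immediate consequence of the two-peak bound already established in Lemma \ref{Lm2Clusters}.
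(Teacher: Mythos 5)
Your proof is correct and follows essentially the same route as the paper: identify a second massive site in $(n+M,\,n+2M]$ by taking the maximal massive point in $[n, n+M]$ and using that it cannot be marked, then apply the two-peak estimate and a union bound over the at most $M$ admissible positions. Your preliminary reduction from $\rho_n$ to $w_n$ via Lemma \ref{LmWRen}(c) is a careful touch the paper elides (its proof works directly with massiveness, i.e.\ with $w_n$), and you correctly invoke part (a) of Lemma \ref{Lm2Clusters} — the paper's citation of part (b) there appears to be a typo, since $k\le 2M=2\ln\ln N$ is far below the threshold $\eps_1\ln N$ required for part (b).
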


\begin{proof} Suppose that  $n$ is a massive point which is not in a trap.
Then consider all massive points $n_i$ such that $n<n_1<...<n_k<n+M$. Note that
such points exist because otherwise $n$ would have been a marked point.
Let now $n^*>n_k$ be the nearest to $n_k$ massive point. Then by construction  $n^*\ge n+M$.
Also $n^*\le n+2M$ because otherwise $n_k$ would have been a marked point and
$n$ would belong to the $n_k$-trap. Hence the event
$$\{n \hbox{ is massive and not in a trap}\}
\subset\bigcup_{n'\in[n+M, n+2M]}\{w_{n}\geq \delta
N^{1/s},\,w_{n'}\geq \delta
N^{1/s}\}.$$
By Lemma \ref{Lm2Clusters}(b) we obtain
$$\begin{aligned}
&\bP\left(n \hbox{ is massive and not in a trap} \right)\\
&\le \sum_{n'=n+M}^{n+2M}\bP\left(
w_{n}\geq \delta
N^{1/s},\, w_{n'}\geq \delta
N^{1/s}\right)\le\Const \frac{\beta^M}{N}
\end{aligned}
$$
which proves our statement.
\end{proof}

Our next goal is to show that $\bP(n \text{ is massive})$ and
$\bP(n \text{ is marked})$ are of the same order.

\begin{lemma}
\label{LmLEss}
(a) For each $l,R\geq 1$ the following limit exists
$$f_l(\brzeta, \brpi, \brv, R)
=\lim_{t\to\infty} t^s \bP(w_n\geq t R, w_{n+j}<t \text{ for }j=1\dots l|
\zeta_n=\brzeta, \pi_n=\brpi, v_n=\brv).$$
(b) Let\footnote{the limit exists since $f_l$ is decreasing} $\brf=\lim_{l\to\infty} f_l(\brzeta, \brpi, \brv, R).$
Then
$|\brf(\brzeta, \brpi, \brv,R)-f_l(\brzeta, \brpi, \brv, R)|=O(\theta^l).$
\end{lemma}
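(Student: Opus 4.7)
The plan is to iterate \eqref{WRec} to reduce the joint event in the definition of $f_l$ to a two-sided inequality on $w_{n+l}$, and then apply Lemma \ref{LmWRen}(b) to obtain the asymptotics. Write $\Lambda_0=1$ and $\Lambda_j=\lambda_n\lambda_{n+1}\cdots\lambda_{n+j-1}$ for $1\le j\le l$, and let $\mathcal{G}_l$ be the $\sigma$-algebra generated by $(P_k,Q_k,R_k)$ for $n<k\le n+l$ together with the conditioning $(\zeta_n,\pi_n,v_n)=(\brzeta,\brpi,\brv)$. All of $\Lambda_j$, $\zeta_{n+l}$, $\pi_{n+l}$, $v_{n+l}$, and the lower-order sums $S_l=\sum_{i=0}^{l-1}\Lambda_i(\pi_{n+i},v_{n+i})$ and $\tilde S_k=\sum_{i=k}^{l-1}(\Lambda_i/\Lambda_k)(\pi_{n+i},v_{n+i})$ are $\mathcal{G}_l$-measurable. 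Iterating \eqref{WRec} gives $w_{n+k}=(\Lambda_l/\Lambda_k)\,w_{n+l}+\tilde S_k$ for $0\le k\le l$, and hence the event $\{w_n\ge tR,\,w_{n+j}<t,\,1\le j\le l\}$ is equivalent to $B_t\le w_{n+l}<A_t^*$ with $B_t:=(tR-S_l)/\Lambda_l$ and $A_t^*:=\min_{1\le k\le l}(\Lambda_k/\Lambda_l)(t-\tilde S_k)$. As $t\to\infty$, $B_t\sim tR/\Lambda_l$ and $A_t^*\sim tM_l/\Lambda_l$, where $M_l:=\min_{1\le k\le l}\Lambda_k$.

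For part (a), conditioning on $\mathcal{G}_l$ and applying Lemma \ref{LmWRen}(b) to the conditional tail of $w_{n+l}$ given $(\zeta_{n+l},\pi_{n+l},v_{n+l})$ (which is legitimate because $(\zeta_n,\pi_n,v_n)$ is a Markov chain driven by the i.i.d.\ environment) yields
$$\bP(B_t\le w_{n+l}<A_t^*\mid\mathcal{G}_l)\sim f(\zeta_{n+l},\pi_{n+l},v_{n+l})\bigl(B_t^{-s}-(A_t^*)^{-s}\bigr)$$
on $\{R<M_l\}$, and is $o(t^{-s})$ otherwise. Multiplying by $t^s$, letting $t\to\infty$, and interchanging with the outer expectation via dominated convergence gives
$$f_l(\brzeta,\brpi,\brv,R)=\bE\!\left[\mathbf{1}_{\{R<M_l\}}\,f(\zeta_{n+l},\pi_{n+l},v_{n+l})\,\Lambda_l^s(R^{-s}-M_l^{-s})\,\Big|\,\zeta_n=\brzeta,\pi_n=\brpi,v_n=\brv\right],$$
establishing existence of the limit.

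For part (b), monotonicity in $l$ of the event $\{w_{n+j}<t,\,j\le l\}$ shows $f_l\ge f_{l+1}\ge 0$, so $\brf=\lim_l f_l$ exists. Since $R\ge 1$,
$$f_l-f_{l+1}=\lim_{t\to\infty}t^s\,\bP(w_n\ge tR,\,w_{n+j}<t\,\forall j\le l,\,w_{n+l+1}\ge t\mid\cdot)\le\lim_{t\to\infty}t^s\,\bP(w_n\ge t,\,w_{n+l+1}\ge t\mid\cdot).$$
For $t$ large enough that $l+1\le\eps_1\ln t$ (eventually satisfied since $l$ is fixed), Lemma \ref{Lm2Clusters}(a) applied at distance $l+1$ bounds the right-hand side by $C\beta^{l+1}t^{-s}$. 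Hence $f_l-f_{l+1}\le C\beta^{l+1}$, and summing over $k\ge l$ gives $\brf-f_l=O(\beta^l)$, which is the claim with $\theta=\beta$.

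The main obstacle is the dominated-convergence argument in part (a): one must produce a $t$-uniform majorant for $t^s\bP(B_t\le w_{n+l}<A_t^*\mid\mathcal{G}_l)$ that is integrable against the $\mathcal{G}_l$-law. This is furnished by the uniform upper bound $\bP(w_{n+l}\ge x\mid\cdot)\le Cx^{-s}$ (derivable by combining parts (a) and (b) of Lemma \ref{LmWRen}), which yields the bound $C\Lambda_l^s R^{-s}$, together with $\bE[\Lambda_l^s\mid\zeta_n=\brzeta]<\infty$ (which follows from $\Lambda_l\le\|A_{n+l-1}\cdots A_n\|$ and $r(s)=1$). The event $\{R=M_l\}$ is null and contributes nothing, as the law of $M_l$ is continuous under the non-arithmeticity condition implicit in Lemma \ref{LmWRen}(b).
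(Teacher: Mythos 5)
Your proof follows essentially the same route as the paper's: part (a) iterates \eqref{WRec} to reduce the joint event to a two-sided interval condition on $w_{n+l}$ and then invokes Lemma \ref{LmWRen}(b), and part (b) bounds $f_l-f_{l+1}$ by $\brbP(w_n\geq t,\,w_{n+l+1}\geq t)$ and applies Lemma \ref{Lm2Clusters}(a). You supply more detail than the paper (the explicit majorant for dominated convergence and the treatment of the boundary case $R=M_l$), but the argument is the same.
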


\begin{proof}
By \eqref{WRec}
$$ \brbP(w_n\geq t R, w_{n+j}<t \text{ for }j=1\dots l)\approx $$
$$\brbE\left(\bP_{\zeta_{n+l}, \pi_{n+l}, v_{n+l}}\left(
w_{n+l} \in \left[\frac{tR}{\lambda_n \dots \lambda_{n+l-1}},
\frac{t}{\max_j(\lambda_n \dots \lambda_{n+j-1})}\right]\right)\right) $$
so the result follows from Lemma \ref{LmWRen}.

(b) Since
$$  \brbP(w_n\geq Rt, w_{n+j}<t \text{ for }j=1\dots l)-
 \brbP(w_n\geq Rt, w_{n+j}<t \text{ for }j=1\dots {l+1})$$
$$\leq \brbP(w_n\geq t, w_{n+l+1}\geq t) $$
the result follows by  Lemma \ref{Lm2Clusters}(a).
\end{proof}

\begin{lemma}
\label{LmFPos}
$\bP(\brf(\zeta, \pi, v,1)>0)>0.$
\end{lemma}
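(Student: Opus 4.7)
The plan is to prove the stronger statement $\bE[\brf(\zeta, \pi, v, 1)] > 0$, which immediately implies $\bP(\brf > 0) > 0$ since $\brf \geq 0$ (as a decreasing limit of non-negative quantities $f_l$). By Lemma \ref{LmLEss}(b), $\bE[\brf] = \bE[f_l] + O(\theta^l)$ for every $l$, so it suffices to show $\liminf_{l\to\infty} \bE[f_l(\cdot, 1)] > 0$.

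The key idea is to reinterpret $\alpha_l := \bE[f_l(\cdot, 1)] = \lim_{t\to\infty} t^s \bP(w_n \geq t,\, w_{n+j} < t,\, j=1,\ldots,l)$ via the stationary point process $\Pi_t := \{n : w_n \geq t\}$. I would partition $\Pi_t$ into clusters under the equivalence ``connected by a chain in $\Pi_t$ with consecutive gaps $\leq l$''. Each finite cluster then has a unique rightmost element, which is precisely a truly marked point. Thus the density of truly marked points equals the density of clusters, and the Palm identity for stationary point processes yields
$$
\bP(0 \text{ truly marked}) \;=\; \frac{\bP(0 \in \Pi_t)}{\bE[|C|_{t,l}]} \;\geq\; \frac{\bP(0 \in \Pi_t)}{\bE[|C_0|\mid 0 \in \Pi_t]},
$$
where the last step uses $\bE[|C|] \leq \bE[|C|^2]/\bE[|C|] = \bE[|C_0|\mid 0 \in \Pi_t]$ (unbiased mean $\leq$ size-biased mean, by Cauchy--Schwarz). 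Since $t^s \bP(0 \in \Pi_t) \to c_0 = \bE[f(\zeta,\pi,v)] > 0$ by Lemma \ref{LmWRen}(b), we would obtain $\alpha_l \geq c_0/\limsup_t \bE[|C_0|\mid w_0 \geq t]$.

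The remaining task is to bound the size-biased mean cluster size $\bE[|C_0|\mid w_0 \geq t]$ uniformly in both $t$ and $l$. I would estimate it by $1 + 2\sum_{k\geq 1} \bP(k \in C_0 \mid w_0 \geq t)$ (stationarity matching backward and forward contributions) and split the sum into two ranges. For $1 \leq k \leq \varepsilon_1\ln t$, Lemma \ref{Lm2Clusters}(a) gives $\bP(w_k \geq t \mid w_0 \geq t) \leq C\beta^k/c_0$, whose sum is $\leq C\beta/((1-\beta)c_0)$. For $k > \varepsilon_1 \ln t$, I would exploit that membership $k \in C_0$ requires a chain of at least $\lceil k/l \rceil$ intermediate massive points between $0$ and $k$, and combine Lemma \ref{Lm2Clusters}(b) with a union bound over chain configurations to show this contribution vanishes as $t \to \infty$, uniformly in $l$. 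This would yield a uniform bound $\bE[|C_0|\mid w_0 \geq t] \leq \brC_\infty < \infty$, hence $\alpha_l \geq c_0/\brC_\infty > 0$ for all $l$.

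The main obstacle I expect is precisely this uniform-in-$l$ control of the size-biased mean cluster size. The geometric decay $\beta^k$ from Lemma \ref{Lm2Clusters}(a) takes care of the short range $k = O(\log t)$, but beyond the coherence length the pointwise conditional probabilities $\bP(w_k \geq t \mid w_0 \geq t)$ need not be summable in a stationary ergodic regime, so a careful combinatorial argument exploiting the chain-in-cluster constraint together with Lemma \ref{Lm2Clusters}(b) is essential to close the estimate.
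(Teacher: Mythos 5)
Your overall framework is sound as far as the soft steps go: reducing to $\bE[\brf]>0$, identifying the $l$-marked points with the rightmost elements of gap-$\le l$ clusters of the massive set $\Pi_t=\{n: w_n\ge t\}$, the Palm identity $\bP(0\hbox{ marked})=\bP(0\in\Pi_t)/\bE[|C_0|\,|\,0\hbox{ marked}]$, and the domination of the unbiased cluster mean by the size-biased one are all correct (modulo a.s.\ finiteness of clusters, which must be established first since an infinite cluster has no rightmost element and would break the identity in exactly the direction the adversary $\brf\equiv 0$ wants). But the proof has a genuine gap at the step you yourself flag: the uniform bound on $\bE[|C_0|\,|\,w_0\ge t]$ for the range $k>\eps_1\ln t$. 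Lemma \ref{Lm2Clusters}(b) only gives $\bP(w_0\ge t, w_k\ge t)\le Ct^{-s(1+\eps_2)}$ per pair, so a direct sum is $o(t^{-s})$ only over $O(t^{s\eps_2})$ values of $k$; beyond that the chain constraint forces at least $k/l$ massive points in $[0,k]$, but the only tools available (first and second moments of $|\Pi_t\cap[0,K]|$ via Lemma \ref{Lm2Clusters}) yield, after Markov or Cauchy--Schwarz, contributions of order $l\,t^{-s}$ per dyadic block --- not summable and not uniform in $l$. Closing this would require substantially stronger inputs (e.g.\ exponential moments of $|\Pi_t\cap[0,K]|$ conditioned on $0\in\Pi_t$) that are not in the paper's toolbox, so as written the argument does not go through.

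It is worth seeing how the paper sidesteps exactly this difficulty. It argues by contradiction: if $\bP(\brf>0)=0$ then $\bP(n\hbox{ is marked})\le\eps/N$ for large $N$, whereas Lemmas \ref{LmWRen} and \ref{non-cluster} give $\bP(n\hbox{ is massive and in a trap})\ge c/N$. Since a massive point in a trap lies within distance $M=\ln\ln N$ of a marked point, one splits on whether that distance is $<D$ or in $[D,M]$: the first case contributes $\le\eps D/N$ (union bound over marked points), and the second contributes $\le\sum_{j\ge D}C\beta^j/N\le C\beta^D/N$ by Lemma \ref{Lm2Clusters}(a), because $M\ll\eps_1\ln t$ keeps everything inside the geometric-decay regime; choosing $D$ then $\eps$ gives the contradiction. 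In other words, the paper's trap definition (gap parameter $M=o(\ln t)$) together with Lemma \ref{non-cluster} localizes the cluster-size control to the window where Lemma \ref{Lm2Clusters}(a) applies, so the long-range estimate you are missing is never needed. I would recommend either importing this localization into your cluster argument (work with gap parameter $l\le\eps_1\ln t$ and handle larger separations by the ``two massive points at distance in $[l,2l]$'' trick of Lemma \ref{non-cluster}) or switching to the paper's contradiction argument outright.
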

\begin{proof}
Assume to the contrary that $\bP(\brf>0)=0.$
Then for each $\eps$ there is $n_0$ such that for $N\geq N_0$ we have
$\bP(n \text{ is marked})\leq \frac{\eps}{N}.$
Combining Lemma \ref{LmWRen} and Lemma \ref{non-cluster} we obtain that there is a constant $c$ such that
\begin{equation}
\label{TnOrderN}
\bP(\bbT_n)\geq \frac{c}{N}, \text{ where } \bbT_n=\{n \text{ is marked and belongs to a trap}\}.
\end{equation}
If $n$ is in a trap let $D_n$ be the distance to the nearest marked point to the left of $n.$
Given $D$ we write
$$ \bP(\bbT_n)=\bP(\bbT_n \text{ and }D_n<D)+
\bP(\bbT_n \text{ and }D_n\geq D). $$
The first term equals to
$$ \sum_{j=0}^{D-1} \bP(n \text{ is massive and } n+j \text{ is marked})
\leq \sum_{j=0}^{D-1} \bP(n+j \text{ is marked})\leq \frac{\eps D}{N}. $$
On the other hand
$$ \bP(\bbT_n \text{ and }D_n\geq D)\leq \sum_{j=D}^{M} \bP(n \text{ and } n+j\text{ are massive})
\leq \frac{C\beta^D}{N}. $$
Thus
$$ P(\bbT_n)\leq \frac{\eps D+C\beta^D}{N}. $$
Taking $D$ so large that $C\beta^D\leq \frac{c}{3} $ and then choosing $\eps\leq \frac{c}{3D}$
we obtain a contradiction with \eqref{TnOrderN}.
\end{proof}

We now turn our attention to the mass of the trap. Observe that for $j\leq M$ we have
$$ \rho_{n-j}\approx w_n \lambda_{n-j}\dots \lambda_n l_{n-j} (v_{n-j}). $$
Accordingly introduce
$$ a_n=\sum_{j=0}^\infty \lambda_{n-j}\dots \lambda_n l_{n-j} (v_{n-j})
\text{ and }
a_{n_1, n_2}=\sum_{j=0}^{n_2-n_1} \lambda_{n_2-j}\dots \lambda_{n_2} l_{n_2-j} (v_{n_2-j}) .
$$
In the next result proven in Section \ref{tail} we use the same notation as in Lemma \ref{LmWRen}.
\begin{lemma}
\label{LmARen}
(a) If $k>\brC \ln t $ then
$$ \bP(a_{n}-a_{n-k, n}\geq 1)\leq C t^{-\brs} \text{ and}$$

$$(b)\quad \bP(a_{n-k, n}\geq t)\sim \hc t^{-s} .\Quad $$
(c) Moreover there exists a measure $\nu$ such that if $k>\brC \ln t$ then
$$ t^s \bE\left(1_{a_{n-k, n}\geq t} f\left(\zeta_n, \pi_n, v_n, \frac{a_{n-k, n}}{t}\right)\right)\sim
\hc \iint f(\zeta, \pi, v, z) \frac{dz d\nu(\zeta, \pi, v)}{z^{s+1}}. $$
\end{lemma}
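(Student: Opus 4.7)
\smallskip\noindent\textbf{Proof plan for Lemma \ref{LmARen}.}
The strategy is to recognize $a_n$ as the time-reversed analogue of $w_n$ and to reuse the renewal machinery developed for Lemma \ref{LmWRen}. Indeed, a direct inspection of the defining series yields the affine recursion
\[
a_n = \lambda_n\bigl(a_{n-1}+l_n(v_n)\bigr),
\]
which goes backwards in $n$, in contrast to $w_n = \lambda_n w_{n+1}+(\pi_n,v_n)$ which goes forwards. Since the sequence $(P_k,Q_k,R_k)$ is i.i.d., the reversed sequence has the same law, and in the reversed environment the recursion for $a_n$ becomes a standard forward random affine recursion with the same multiplicative coefficients $\lambda_n$ (so the same $s$ with $r(s)=1$) and driving terms $l_n(v_n)$ that are stationary and bounded. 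This reduction puts us in the setting of Appendix \ref{AppRenewal} exactly as for $w_n$.

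For part (a) the plan is a direct Markov estimate. The remainder $a_n-a_{n-k,n}=\lambda_n\lambda_{n-1}\cdots\lambda_{n-k}\,a_{n-k-1}$ (up to a product of $\lambda$'s times a tail of the same form), so using Lemma \ref{Mainlemma} to pick some $\brs>s$ with $r(\brs)<\infty$ and a slightly larger $\brs'>s$ one gets
\[
\bE\bigl(\lambda_n\cdots\lambda_{n-k}\bigr)^{\brs'}\le r(\brs')^{\,k+1},
\]
while the $\brs$-th moment of $a_{n-k-1}$ is uniformly bounded (this uses once more $r(\brs)<1$ for some $\brs$ slightly below one of the admissible values). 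Choosing $\brC$ so that $r(\brs')^{\brC\ln t}\le t^{-\brs'}$ delivers the claim by a Markov inequality, exactly as in part (c) of Lemma \ref{LmWRen}.

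For part (b), I would apply the renewal theorem from Appendix \ref{AppRenewal} to the reversed recursion. The hypotheses to verify are (i) the multiplicative cocycle $\lambda_n\lambda_{n-1}\cdots$ has the critical exponent $s$ (immediate from stationarity and $r(s)=1$), (ii) the non-arithmeticity of the logarithm of the top eigenvalue in the fiber (this is exactly \eqref{NA}, transported to the reversed environment, which has the same distribution), and (iii) exponential mixing for the underlying Markov chain on $(\zeta,\pi,v)$, guaranteed by condition \eqref{EqC2*} and Lemma \ref{lemma1}. The truncation to a finite sum $a_{n-k,n}$ with $k>\brC\ln t$ is justified by part (a). The constant $\hc$ emerges from the explicit invariant density in the renewal theorem.

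Part (c) is a refinement that tracks the joint distribution of the state $(\zeta_n,\pi_n,v_n)$ and the normalized overshoot $a_{n-k,n}/t$ at the moment of exceedance. The renewal theorem in Appendix \ref{AppRenewal} produces not just a tail exponent but a product-form limit: a $\frac{dz}{z^{s+1}}$-factor coming from the overshoot distribution of the multiplicative random walk, and an independent factor $d\nu(\zeta,\pi,v)$ coming from the stationary distribution of the Markov process on fiber coordinates conditional on a large value being attained. The main obstacle I foresee is the careful bookkeeping in step (c) to check that $l_n$ (which is approximable by $\cF_{n,n+r}$-measurable functions) and $v_n,\pi_n,\zeta_n$ (which are $\cF_{\le n-1}$-measurable, modulo exponentially small tails) can be jointly captured by a Markov chain to which the renewal theorem in Appendix \ref{AppRenewal} applies; the exponential approximability proved in Section \ref{s1.3} is the essential input that makes this possible.
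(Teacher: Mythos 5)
Your proposal lands on the same tool as the paper, but the reduction you use to get there differs and has a weak point. The paper's entire proof of Lemma \ref{LmARen} is one sentence: the result follows from the $z^-$ part of Theorem \ref{ThRenContr} applied to the \emph{same forward} Markov process $\Phi_n=(\pi_n,v_n,\zeta_n)$, $g_n=(P_{n+1},Q_{n+1},R_{n+1})$ used for Lemma \ref{LmWRen}. The appendix theorem was deliberately stated with two sums, $z^+_{n,k}$ (forward, as for $w_n$) and $z^-_{n,k}$ (backward, as for $a_{n-k,n}$), and its part (c) gives the joint asymptotics $\tEXP(1_{z^-_{n,k}>t}H(\tomega))\sim t^{-s}\tnu(H)$ with respect to the two-sided stationary measure $\tProb$ on $\tOmega$ --- which is exactly what part (c) of the lemma needs. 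You instead propose to time-reverse the i.i.d.\ environment so that the backward recursion $a_n=\lambda_n(a_{n-1}+l_n(v_n))$ becomes a forward affine recursion. The catch is that $\lambda_n$, $v_n$, $\zeta_n$, $\pi_n$ are functionals of $\omega_{\le n}$; after reversal they become functionals of the \emph{future} of the reversed environment, so the reversed recursion is not driven by coefficients adapted to the reversed filtration and is not a ``standard'' forward affine recursion to which the one-sided renewal statement applies. This is precisely the difficulty the $z^-$ formulation of Theorem \ref{ThRenContr} is built to absorb (by working with the two-sided stationary measure), and it is also why part (c) of the lemma, which tracks the joint law of $(\zeta_n,\pi_n,v_n)$ at the right endpoint together with $a_{n-k,n}/t$, does not reduce to the $w_n$ case by symmetry. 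You flag this ``bookkeeping'' issue at the end, but exponential approximability of $l_n$ alone does not repair the adaptedness problem; the clean route is to skip the reversal and quote the $z^-$ statement directly.

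One smaller slip in your part (a): you claim the $\brs$-th moment of $a_{n-k-1}$ is uniformly bounded for some $\brs>s$. Since $\bP(a>t)\sim\hc t^{-s}$, all moments of order $>s$ are infinite. The correct bookkeeping for $a_n-a_{n-k,n}=(\lambda_{n-k}\cdots\lambda_n)\,a_{n-k-1}$ is either a Markov bound at an exponent \emph{below} $s$ (where $r(\alpha)<1$ and $\bE(a^\alpha)<\infty$), followed by taking $\brC$ large so that $r(\alpha)^{\brC\ln t}\le t^{-\brs}$, or a splitting of the event according to whether the product of the $\lambda$'s exceeds $\theta^k$ --- this is how Lemma \ref{LmMax} and Theorem \ref{ThRenContr}(a) actually run. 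The conclusion is unaffected, but the exponent you put on $a$ must stay strictly below $s$.
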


\begin{corollary}
\label{CrMassIntensity}
(a) The following limit exists
$$h(t,\delta)=\lim_{N\to\infty} N^{1/s} \bP(n \text{ is marked and } \mm_n>t N^{1/s}).$$
(b) There is $\bc>0$ such that
$ \lim_{\delta\to 0} h(t, \delta)=\bc t^{-s}. $
\end{corollary}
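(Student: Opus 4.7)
The plan is to separate the contribution of the environment to the right of site $n$ (which drives the marked condition and the magnitude of $w_n$) from the contribution to the left of $n$ (which drives the mass accumulated in the trap), using the triple $(\zeta_n,\pi_n,v_n)$ as the conditional bridge that decouples past and future. The inputs are Lemmas \ref{LmWRen}, \ref{LmLEss}, and \ref{LmARen}.

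First I will show that when $w_n$ is of order $\delta N^{1/s}$, the recursion \eqref{WRec} yields $w_{n-j}=\lambda_n\lambda_{n-1}\cdots\lambda_{n-j+1}\,w_n+O(1)$, with the error local to $[n-j,n]$. Combining with $\rho_{n-j}\approx l_{n-j}(\bu_{n-j})w_{n-j}$ gives
\[
\mm_n \;=\; w_n\,\ta_{n-M,\,n} \;+\; \cR,
\]
where $\ta_{n-M,n}$ is the past-measurable truncated sum appearing in Lemma \ref{LmARen} and $\cR$ is negligible by Lemma \ref{LmWRen}(c). The event in question then becomes, modulo negligible corrections,
\[
\bigl\{w_n\ge\max(\delta,\,t/\ta_{n-M,n})\,N^{1/s}\bigr\}\;\cap\;\bigcap_{j=1}^{M}\bigl\{w_{n+j}<\delta N^{1/s}\bigr\}.
\]
Conditioning on $(\zeta_n,\pi_n,v_n)$ and on $\cF_{n-M,n-1}$ makes past and future independent, so Lemma \ref{LmLEss}(a) applies to the future event with threshold $\delta N^{1/s}$ and ratio $R=\max(1,\,t/(\delta\,\ta_{n-M,n}))$, giving conditional probability $(\delta N^{1/s})^{-s}\,f_M(\zeta_n,\pi_n,v_n,R)(1+o(1))$. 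Integrating over the past and using Lemma \ref{LmLEss}(b) to replace $f_M$ by $\brf$ (with error $O(\theta^M)=o(1)$) and Lemma \ref{LmARen}(a) to replace $\ta_{n-M,n}$ by $a_n$, gives the existence of the limit in (a), of the form
\[
h(t,\delta)\;\propto\;\delta^{-s}\,\bE\!\left[\brf\!\left(\zeta_n,\pi_n,v_n,\,\max\!\bigl(1,\tfrac{t}{\delta a_n}\bigr)\right)\right]
\]
up to the normalization used in the statement.

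For (b) the crucial step is the large-$R$ asymptotic $\brf(\brzeta,\brpi,\brv,R)\sim R^{-s} f(\brzeta,\brpi,\brv)$, with $f$ the density from Lemma \ref{LmWRen}(b). The heuristic is that when $R$ is large the constraint $w_n\ge tR$ already forces the mechanism producing the large value of $w_n$ to act through some $\lambda_{n+k}\gg 1$, after which the recursion \eqref{WRec} makes the subsequent $w_{n+j}$ drop below $t$; in particular the additional marked constraints $w_{n+j}<t$ become automatic with probability tending to one, so the conditional tail of $w_n$ under the marked restriction matches the unconditional tail of Lemma \ref{LmWRen}(b). Once this is granted, $t/(\delta a_n)\to\infty$ a.s.\ as $\delta\to 0$, and substitution yields $\lim_{\delta\to 0} h(t,\delta)=t^{-s}\,\bE[a_n^{\,s}\,f(\zeta_n,\pi_n,v_n)]$; setting $\bc$ equal to this expectation, positivity follows from Lemma \ref{LmFPos} together with $a_n>0$ a.s.

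The main obstacle is justifying the large-$R$ asymptotic for $\brf$: it requires a uniform renewal-theoretic argument of the same flavour as Lemma \ref{LmWRen}(b) but with the additional constraint that $w_{n+j}$ stay small for $1\le j\le M$, and a careful split between the regime where a single $\lambda_{n+k}$ does the all-or-nothing work of inflating $w_n$ and the regime where several $\lambda$'s contribute. The remaining ingredients (the factorization $\mm_n\approx w_n\,\ta_{n-M,n}$, the truncation error in passing from $\ta_{n-M,n}$ to $a_n$, and the replacement of $f_M$ by $\brf$) are routine consequences of Lemmas \ref{LmWRen} and \ref{LmARen} together with the exponential contraction of the cocycle generated by the $A_n$'s.
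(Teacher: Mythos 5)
Your treatment of part (a) follows the paper's route: factor $\mm_n\approx w_n\,a_{n-M,n}$, condition on $(\zeta_n,\pi_n,v_n)$ to decouple the past (which determines $a_{n-M,n}$) from the future (which determines the marked event and the tail of $w_n$), and invoke Lemmas \ref{LmLEss} and \ref{LmARen}. That part is sound and matches the paper.

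Part (b) contains a genuine error. The large-$R$ asymptotic $\brf(\brzeta,\brpi,\brv,R)\sim R^{-s}f(\brzeta,\brpi,\brv)$, on which your whole computation of the $\delta\to 0$ limit rests, is false. Under \eqref{EqC2*} the $\lambda_j$ are uniformly bounded by some constant $K$, so \eqref{WRec} gives $w_n\le K w_{n+1}+K$; hence the event $\{w_n\ge tR,\ w_{n+1}<t\}$ is empty once $R>K+1$ and $t$ is large, i.e.\ $\brf(\cdot,R)=0$ for all $R$ beyond a fixed constant $W$ (this is precisely why a marked point automatically satisfies $w_n\le \delta W N^{1/s}$, the observation with which the paper's proof opens). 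A second symptom of the same problem is that your proposed constant $\bc=\bE\bigl[a_n^{\,s} f(\zeta_n,\pi_n,v_n)\bigr]$ is infinite: by Lemma \ref{LmARen}(b), $\bP(a_n>u)\sim \hc\, u^{-s}$, so $a_n^{\,s}$ is not integrable. The correct mechanism is the opposite of the one you describe: since $w_n\le \delta W N^{1/s}$ on the marked event, the condition $\mm_n>tN^{1/s}$ forces $a_{n-M,n}>t/(\delta W)$, so as $\delta\to 0$ the expectation is carried entirely by the far tail of $a_{n-M,n}$ (values of order $t/\delta$), on which the ratio $t/(\delta a_n)$ stays of order one rather than tending to infinity. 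The limit is then computed from the joint tail asymptotics of Lemma \ref{LmARen}(c), namely $t^s\,\bE\bigl(1_{a\ge t}\,f(\zeta,\pi,v,a/t)\bigr)\sim \hc \iint f(\zeta,\pi,v,z)\,z^{-(s+1)}\,dz\,d\nu$, and not by a pointwise limit in $R$; your argument that ``$t/(\delta a_n)\to\infty$ a.s.'' ignores that, after the $\delta^{-s}$ normalization, the integral concentrates on exactly the vanishing-probability event where this pointwise limit does not hold.
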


\begin{proof}
Take $W$ such that if $n$ is marked then $w_n\in [\delta N^{1/s}, \delta W N^{1/s}]. $
We have
$$ \bP (n \text{ is marked and } \mm_n>t N^{1/s})=
\bE\left(1_{a_{n-M, n}>\frac{t}{\delta W}}
\bP_{\zeta_n, \pi_n, v_n} \left(w_n>\min\left(1, \frac{t}{\delta a_n}\right) \delta N^{1/s} \right)\right)$$
$$ \sim \frac{1}{\delta^s N} \bE\left(1_{a_{n-M, n}>\frac{t}{\delta W}}
\brf\left(\zeta_n, \pi_n, v_n, \min\left(1, \frac{\delta a_n}{t}\right)\right)\right) . $$
This proves (a). To prove (b) we use Lemma \ref{LmARen}(c) to get
$$ \frac{1}{\delta^s} \bE\left(1_{a_{n-M, n}>\frac{t}{\delta W}}
\brf\left(\zeta_n, \pi_n, v_n, \min\left(1, \frac{\delta a_n}{t}\right)\right)\right)$$
$$\sim \hc \frac{W^s}{t^s} \iint \brf(z,\pi, v, \min(1, W/z)) \frac{d\nu dz}{z^s} .
 $$
\end{proof}

\subsection{Proof of Theorem \ref{ThTraps}.}
We are now in a position to prove Theorem \ref{ThTraps}.
The following lemma essentially repeats the statement of Theorem \ref{ThTraps}
with the difference that we can now state it in terms of $h(\cdot, \delta)$ studied above
(Corollary \ref{CrMassIntensity}).
\begin{lemma} Suppose that all conditions of Theorem \ref{ThTraps} are satisfied. Then

(a) For a fixed $\delta>0$ the point process
$$\{(nN^{-1},\mm_nN^{-\frac{1}{s}}):\, n \text{ is }\delta N^{\frac{1}{s}}\text{-marked}\}$$
converges as $N\to\infty$ to a Poisson process on $[0,1]\times [\delta, \infty)$ with measure
$dt'\mu_\delta$ such that $\mu_\delta([t_1, t_2])=h(t_2, \delta)-h(t_1, \delta).$

(b) As $\delta\to 0$ $\mu_\delta$ converges to a measure with density $\frac{\bc s}{t^{s+1}}.$
\end{lemma}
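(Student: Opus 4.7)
The plan is to derive (b) immediately from Corollary \ref{CrMassIntensity}(b), and to derive (a) by a standard factorial-moment argument for convergence of a point process to a Poisson process on the locally compact space $[0,1]\times[\delta,\infty)$. Since $h(t,\delta)\to \bc\, t^{-s}$ as $\delta\to 0$ (Corollary \ref{CrMassIntensity}(b)) and $\mu_\delta((t_1,t_2])=h(t_1,\delta)-h(t_2,\delta)$, part (b) is immediate.

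For part (a), let $\Xi_N$ denote the point process in the statement. To show $\Xi_N$ converges in distribution to the Poisson process with intensity $dt'\otimes \mu_\delta$, it suffices to verify two facts: (i) for every rectangle $B=[a,b]\times(t_1,t_2]\subset[0,1]\times[\delta,\infty)$, $\bE[\Xi_N(B)]\to (b-a)\mu_\delta((t_1,t_2])$, and (ii) for any finite family of disjoint rectangles $B_1,\dots,B_r$, the counts $\Xi_N(B_1),\dots,\Xi_N(B_r)$ become asymptotically independent Poisson. The first-moment statement (i) is immediate from stationarity and Corollary \ref{CrMassIntensity}(a):
$$\bE[\Xi_N(B)]=\sum_{n=\lceil aN\rceil}^{\lfloor bN\rfloor}\bP(n\text{ is marked},\,\mm_n N^{-1/s}\in(t_1,t_2])\to (b-a)\bigl(h(t_1,\delta)-h(t_2,\delta)\bigr).$$

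For (ii), the key decoupling observation is that, up to error that is negligible after summing over $[0,N]$, the event $\{n\text{ is marked},\ \mm_n N^{-1/s}\in(t_1,t_2]\}$ is $\cF_{n-C\ln N,\,n+C\ln N}$-measurable for a sufficiently large constant $C$. Indeed, $\zeta_{n-1}$ and $\pi_n$ are approximated to within $\theta^{C\ln N}=N^{-c}$ by functions of $\omega_{[n-C\ln N,n-1]}$ by Lemma \ref{lemma1}, while Lemmas \ref{LmWRen}(a) and \ref{LmARen}(a) imply that $w_{n+j}-w_{n+j,\,n+j+C\ln N}$ and $a_n-a_{n-C\ln N,\,n}$ are at most $1$ outside an event of probability $O(N^{-\brs/s})$; taking $C$ large enough this is summable. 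Replacing the true indicators by their $\cF_{n-C\ln N,\,n+C\ln N}$-measurable approximations, the i.i.d. hypothesis \eqref{EqC1} gives exact independence across positions $n_1,\dots,n_r$ whose $2C\ln N$-neighbourhoods are disjoint, and the factorial moments of $\Xi_N$ factor in the limit. Anti-clustering, ensuring that distinct points of $\Xi_N$ cannot collide in the limit, follows from Lemma \ref{Lm2Clusters}: pairs of massive points at separation $k\in(M,\eps_1\ln N]$ contribute $O(\beta^k/N)$ per position and $k>\eps_1\ln N$ contributes $O(N^{-s(1+\eps_2)}/\delta^s)$, so the total pair-contribution in any box of horizontal size $o(1)$ vanishes (with the $\beta^M$ factor killing the short-range pairs, as already used in Lemma \ref{non-cluster}).

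The main technical obstacle, in my view, is the decoupling step: the indicator of being marked with prescribed rescaled mass is genuinely non-local, because $w_n$, $w_{n+j}$ ($1\le j\le M$) and $\mm_n=\sum_{j=n-M}^n\rho_j$ each involve tails over infinitely many layers. One must carefully construct an $\cF_{n-C\ln N,\,n+C\ln N}$-measurable surrogate whose discrepancy with the true indicator is controlled uniformly in $\omega$ on an event of probability $1-O(N^{-1-\eps})$, and also argue that the boundary contribution $\{\mm_n N^{-1/s}\in [t_1-\eta,t_1+\eta]\cup[t_2-\eta,t_2+\eta]\}$ is negligible using the continuity of $h(\cdot,\delta)$ in $t$ (a direct consequence of the integral representation for $h$ obtained in the proof of Corollary \ref{CrMassIntensity}). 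Once this is in place the Poisson limit follows by the standard factorial-moment criterion.
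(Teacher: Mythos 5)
Your proposal is correct and reaches the same conclusion from the same three ingredients (intensity from Corollary \ref{CrMassIntensity}, anti-clustering from Lemma \ref{Lm2Clusters}, and decoupling via the contraction of the $\zeta/\psi$-chain), but the implementation of the decoupling is genuinely different from the paper's. The paper uses Bernstein's big-block/small-block method: it partitions $[0,N]$ into blocks $I_j$ of length $N^{\eps_3}$ separated by gaps of length $N^{\eps_3/2}$, discards the gaps, shows via Lemma \ref{Lm2Clusters} that a block contains two or more traps with probability $o(N^{\eps_3-1})$, and then manufactures \emph{exactly} i.i.d.\ surrogate masses $\widetilde{\mm}_n$ by restarting the chain $(\psi_n,\hat{\pi}_n)$ at the middle of the preceding gap, the replacement error being super-polynomially small by Proposition \ref{propD1}; the conclusion is then the Poisson limit theorem for triangular arrays of independent indicator vectors, with no factorial-moment bookkeeping. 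You instead keep the original process, localize the marked-with-prescribed-mass event to an $O(\ln N)$ window using Lemma \ref{lemma1} together with the truncation bounds of Lemmas \ref{LmWRen}(a) and \ref{LmARen}(a), and invoke the factorial-moment criterion. Your route avoids the surrogate construction but has to pay for it exactly where you say: the truncation errors (an additive $O(1)$ from the tails and a multiplicative $1+O(N^{-c})$ from the past) can flip the indicator only near thresholds, and this must be excluded not only for $\mm_n N^{-1/s}$ near $t_1,t_2$ (which you address) but also for $w_n$ and each $w_{n+j}$, $1\le j\le M$, near the marking threshold $\delta N^{1/s}$; in all cases the tail asymptotics of Lemma \ref{LmWRen}(b) make these boundary layers $o(N^{-1})$ per site, so the point is minor and your argument closes. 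Part (b) is obtained identically in both treatments.
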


\begin{proof}
To prove (a) we use Bernstein's big block-small block method. Namely, we divide
$[0, N]$ into big blocks $I_j$ of length $N^{\eps_3}$ separated by small blocks of length
$N^{\eps_3/2}.$  We take $\eps_3<\eps_2$, where $\eps_2$ is the constant from Lemma
\ref{Lm2Clusters}. By Lemma \ref{LmWRen} the probability that there is a trap inside the
union of the short intervals tends to $0$ so it suffices to consider the union of long intervals.
We claim that for each $j$ the probability that $I_j$ contains two or more traps is $o(N^{\eps_3-1}).$
Indeed due to Lemma~\ref{Lm2Clusters}, the above probability is bounded by
$$ \sum_{n\in I_j} \sum_{M_N <k<N^{\eps_3}} \bP(\text{both }n \text{ and }n+k \text{ are marked}) $$
$$ \leq \sum_{n\in I_j} \sum_{M_N <k<N^{\eps_3}} \bP(\text{both }n \text{ and }n+k \text{ are massive})$$
$$ \leq \frac{C}{N} \sum_{n\in I_j}
\left[\sum_{k=M_N}^{\eps_1 \ln N} \beta^k+\sum_{k=\eps_1 \ln N}^{N^{\eps_3}} N^{-\eps_2} \right]
 \leq C N^{\eps_3-1} \left[\beta^{M_N}+N^{\eps_3-\eps_2}\right] $$
proving the claim.

Next, let $\{J_l\}$ be a collection of $d$ non-intersecting open intervals in $[\delta, \infty)$ and
$K=(t_1',t_2')$ be an open interval from $[0,1]$.  Let
$$N_l=\Card(n\leq N: n \text{ is marked and }( n,\mathfrak{m}_n)\in NK\times N^{1/s} J_l ). $$
We need to show that, as $N\to\infty$, $\{N_l\}$ converge to independent random
variables having Poisson distribution with parameter $(t_2'-t_1')\mu_\delta(J_l)$
(the proof for all other finite collections of open quadrangles in
$[0,1]\times[\delta,\infty)$ easily follows from this case).
We%Lemmas \ref{LmWRen} and \ref{LmARen} imply that
shall now replace $\mm_n$ by $\widetilde{\mm}_n$ which are defined similarly to $\mm_n$
but have the property of being i.i.d. random variables and, at the same time,
$|\mm_n-\widetilde{\mm}_n|\le N^{-\frac{100}{s}}$. Namely, we do the following:

1. Define Markov process $(\psi_n, \hat{\pi}_{n})$  starting with
initial conditions $\hat{\pi}_{n_j}$ and $\psi_{n_j}$ which have all entries equal to $\frac{1}{m}$,
where $n_j$ is the middle of the short interval preceding $I_j.$ This process is defined
for $n>n_j$ with the $\psi$ component given by \eqref{EqPsi} and $\hat{\pi}_{n}=\hat{\pi}_{n-1}\psi_{n-1}$.

2. Similarly to \eqref{ExpectF10}, set for $n\in I_j$
$$
\hat{\rho}_n:= \sum_{j=n}^{n_{j+1}} \hat{\pi}_j \hat{A}_j \dots \hat{A}_{n+1}\hat{\bu}_n,
$$
where $\hat{A}_{j},\ \hat{\bu}_n$ are define as the corresponding $A$s and $\hat{\bu}$s with $\zeta$s replaced by
$\psi$s.

3. Finally $\widetilde{\mm}_n$ is defined similarly to $\mm_n$ but with $\hat{\rho}_n$ replacing $\rho_n$.

The independence mentioned above is obvious from the construction and the approximation property
now follows from Proposition \ref{propD1}:
$$||\zeta_n-\psi_n||\le \Const \theta^{n-n_j}.$$

Let now $\{\Gamma_j\}$ be a sequence of random $d$-dimensional vectors such that
$$\Gamma_j=
\begin{cases} e_l & if
I_j\subset NK, \text{ there is exactly one trap } \tn\in I_j,\ \mm_\tn\in N^{\frac{1}{s}}J_l,\\
0 &\text{ in all other cases.}
\end{cases}
$$
Then $\Gamma_j$ are iid random vectors and
$$\bP(\Gamma_l=e_l)\sim (t_2'-t_1')\mu_\delta(J_l) N^{\eps_3-1}, \quad
\bP(\Gamma_l=0)= 1-(t_2'-t_1')\mu_\delta(J_l) N^{\eps_3-1}+o(N^{\eps_3-1}). $$
Therefore part (a) follows from the Poisson Limit Theorem for independent random vectors. Part (b)
follows from part (a) and Corollary \ref{CrMassIntensity}(b).
\end{proof}

\section{Proof of Theorem \ref{ThMain}.}\label{proofTh6}
The proof of Theorem \ref{ThMain} is to an extent similar to that of Theorem 2 in
\cite{DG} and we shall give only an outline of it putting emphases at those parts which are new.
%There are at least two reasons for that: first of all this simplifies the reading of the paper,
%and secondly it is both a relatively short and a very important part of the paper.
As in \cite{DG}, we start with a lemma which allows us to show the smallness of the contribution
to $T_N$ which comes from the sites where $\rho_n<\delta N^{\frac{1}{s}}$.
We then compute the main contribution to $T_N$ which comes from the traps in $[0,N]$
described in Theorem \ref{ThTraps}.

Within this section we shall use  the following notation: $\xi_n=\sum_{i=1}^m \xi_{(n,i)}$. Obviously,
$\rho_n=\mathbb{E}_\omega(\xi_n)$. In these notations, Lemma \ref{LmLow} becomes an exact copy of
Lemma 4.1 from \cite{DG}.

\begin{lemma}
\label{LmLow}
Let $\delta> 0$. Then there is $N_\delta$ such that
for $N>N_\delta$ the following holds:
%Let $\delta\equiv\delta_N\to 0$ as $N\to\infty.$ Then for any $\eps>0$ the following holds:

(a) If $0<s<1$ then
$$ \mathrm{E}\left(\sum_{w_n<\delta N^{1/s}}\xi_{n}\right)\le \Const N^{1/s}\delta^{1-s}. $$

(b) If $1<s<2$ then
there is a set $\tOmega_{N,\delta}$ such that $\bP(\tilde{\Omega}_{N,\delta}^c)\leq N^{-100}$ and
$$ \bE\left(1_{\tOmega_{N, \delta}}
\mathbb{E}_\omega\left(\sum_{w_n<\delta N^{1/s}} (\xi_n-\rho_n)\right)^2\right)
\le \Const N^{2/s}\delta^{2-s}. $$

(c) If $0<s<1$ then
$$ \bE\left(\sum_{w_n<\delta N^{1/s}} \rho_n\right)\le \Const N^{1/s}\delta^{1-s}. $$

(d) If $1<s<2$ then
$$ \Var\left(\sum_{w_n<\delta N^{1/s}} \rho_n\right)\le
\Const N^{2/s}\delta^{2-s}. $$

(e) If $s=1$ then given $\frac{1}{2}<\kappa<1$ there is a set $\tOmega_{N, \delta}$ such that
$\bP(\tilde{\Omega}_{N,\delta}^c)\leq N^{-100}$ and
\begin{equation}
\label{LLe1}
\bE\left(1_{\tOmega_{N,\delta}} \left(\Var_\omega \left(
\sum_{w_n<\delta N} (\xi_n-\rho_n)\right)\right)^\kappa\right)
\le \Const N^{2\kappa} \delta^{2\kappa-1},
\end{equation}
\begin{equation}
\label{LLe2}
\bE\left(\left(\sum_{w_n<\delta N} \left(\rho_n-\bE\left(\rho I_{\rho<\delta N}\right)\right)
\right)^{2}\right)
\le \Const N^{2} \delta.
\end{equation}
\end{lemma}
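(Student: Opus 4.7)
The plan is to reduce all five bounds to truncated moments of $w_n$, using the decomposition $\rho_n = l_n(\bu_n)\,w_n + \cR_n$ together with the tail asymptotics from Lemma \ref{LmWRen}. The key preliminary calculation: from $\bP(w_n>t)\sim f\,t^{-s}$ and $\bP(\cR_n>t)\le C t^{-\brs}$ with $\brs>s$, integration by parts gives
$$\bE(w_n^\beta\,1_{w_n<x})\le C x^{\beta-s}\ \text{if } \beta\neq s,\qquad \bE(w_n^2\,1_{w_n<x})\le C x\ \text{if } s=1.$$
Since $\cR_n$ has a strictly thinner tail, its contribution is absorbed into the constant, and the same estimate then holds with $w_n$ replaced by $\rho_n$ on the left.

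For the purely annealed assertions (c), (d), and \eqref{LLe2}: by stationarity of $\rho_n$, summing over $n\in[0,N-1]$ reduces to $N$ times a single-site estimate. Part (c) follows immediately with $\beta=1$, $x=\delta N^{1/s}$, giving $N\cdot C(\delta N^{1/s})^{1-s}=C\delta^{1-s}N^{1/s}$. For the variance bounds (d) and \eqref{LLe2}, I would invoke exponential mixing of the stationary sequences $(\zeta_n,A_n,v_n,\pi_n)$, which follows from Lemma~\ref{lemma1} and the ellipticity condition \eqref{EqC2*}: correlations between $\rho_n 1_{w_n<x}$ and $\rho_{n'}1_{w_{n'}<x}$ decay exponentially in $|n-n'|$ outside a set of negligible $\bP$-probability, so the variance of the truncated sum is bounded by $CN$ times the truncated second moment, and substituting $x=\delta N^{1/s}$ (resp.\ $x=\delta N$) yields the claim.

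The substantive work is in the quenched estimates (a), (b), and \eqref{LLe1}. Part (a) is immediate since $\mathrm{E}(\sum\xi_n\,1_{w_n<x})=\bE(\sum\rho_n\,1_{w_n<x})$ coincides with (c). For (b) and \eqref{LLe1} I would first establish the pointwise quenched variance bound
$$\Var_\omega(\xi_n)\le C(\rho_n^2+\rho_n),$$
which follows from the observation that, conditional on the walk ever hitting $L_n$, the occupation time $\xi_n$ is approximately geometric; ellipticity provides uniform control of the relevant return probability. For cross terms, the strong Markov property at the first visit to the rightmost of the two layers, combined with the mixing properties of Appendix \ref{ScIndBC}, yields $\Cov_\omega(\xi_n,\xi_{n'})\le C\rho_n\rho_{n'}\theta^{|n-n'|}$ on a good set $\tOmega_{N,\delta}$. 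Summing, $\EXP_\omega(\sum_{w_n<x}(\xi_n-\rho_n))^2\le C\sum_{w_n<x}\rho_n^2$, and taking $\bE$ reduces to the truncated second-moment estimate, giving (b) with $x=\delta N^{1/s}$. For \eqref{LLe1}, applying Jensen's inequality $\bE(Y^\kappa)\le(\bE Y)^\kappa$ for $\kappa<1$ yields a bound of order $N^{2\kappa}\delta^\kappa$, which is $\le C N^{2\kappa}\delta^{2\kappa-1}$ for $\delta\in(0,1)$ and $\kappa\in(1/2,1)$ since then $\kappa\ge 2\kappa-1$.

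The principal obstacle will be the quantitative quenched covariance estimate required for (b) and \eqref{LLe1}: one needs exponential decay of $\Cov_\omega(\xi_n,\xi_{n'})$ that is uniform over a set $\tOmega_{N,\delta}$ of $\bP$-measure $1-O(N^{-100})$. Constructing this good set amounts to excluding environments where the approximation $\rho_n\approx l_n(\bu_n)w_n$ or the mixing rate of the associated products of stochastic matrices fails at the scale $\delta N^{1/s}$; the superpolynomial probability bound $\bP(\tOmega_{N,\delta}^c)\le N^{-100}$ is inherited from the concentration and renewal estimates underlying Lemma~\ref{LmWRen} together with Appendix~\ref{ScIndBC}.
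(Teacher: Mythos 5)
Your overall architecture matches the paper's: parts (a) and (c) are indeed immediate from the tail asymptotics plus Markov's inequality, the quenched parts are reduced to a covariance estimate for the occupation times $\xi_n$, and the Jensen step for \eqref{LLe1} is the right closing move. The gap is in the covariance estimate itself, which is exactly the point where the paper says the argument of \cite{DG} does not carry over to the strip and where it does genuinely new work.

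Concretely, the bound $\Cov_\omega(\xi_n,\xi_{n'})\le C\rho_n\rho_{n'}\theta^{|n-n'|}$ that you propose for nearby layers is false. By Lemma \ref{LemmaA1}, $\Corr(\xi_a,\xi_b)\ge 1-C/\EXP(\xi_a)$: inside a deep well the occupation times of neighbouring layers are almost perfectly correlated, so $\Cov_\omega(\xi_n,\xi_{n'})$ is of order $\rho_n\rho_{n'}$ with no geometric decay in $|n-n'|$. The correct dichotomy (and the one the paper uses) is: for $|n-n'|>K\ln N$ the covariance is $O(N^{-100})$ outside the bad set of environments controlled by Lemma \ref{return}, while for $|n-n'|\le K\ln N$ one only has the Cauchy--Schwarz bound $|\Cov_\omega(\xi_{n_1},\xi_{n_2})|\le C\rho_{n_1}\rho_{n_2}$. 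If you now sum this over the $O(N\ln N)$ near-diagonal pairs and take $\bE$, a crude per-pair bound $\bE(\rho_{n_1}\rho_{n_2}\chi_{n_1}\chi_{n_2})\le C\delta^{2-s}N^{(2-s)/s}$ yields $CN\ln N\cdot\delta^{2-s}N^{(2-s)/s}$, which overshoots the target $CN^{2/s}\delta^{2-s}$ by a factor $\ln N$. The missing idea is the annealed factorization: writing
$$\rho_{n-k}\rho_n\le C\sum_{j=0}^{k-1}\Vert A_{n-k+j}\cdots A_{n-k+1}\Vert\,\mathfrak{r}_n+C\Vert A_n\cdots A_{n-k+1}\Vert\,\mathfrak{r}_n^2,$$
conditioning on $\zeta_n$ so that the product norms are independent of $\mathfrak{r}_n$, and using $\bE(\Vert A_n\cdots A_{n-k+1}\Vert\,|\,\zeta_n)\le C\beta^k$ with $\beta=r(1)<1$, one gets $\bE(\rho_{n-k}\rho_n\chi_n)\le C(1+\beta^k\delta^{2-s}N^{(2-s)/s})$; the geometric factor $\beta^k$ in front of the large term makes the sum over $k\le K\ln N$ come out to $O(\ln N)+O(\delta^{2-s}N^{(2-s)/s})$, which is $O(\delta^{2-s}N^{(2-s)/s})$ for $N>N_\delta$. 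Without this step (or an equivalent substitute) your bounds for (b), (d) and \eqref{LLe1} do not close. The same caveat applies to your treatment of (d): ``exponential decay of correlations of $\rho_n 1_{w_n<x}$'' is not available off the shelf for these heavy-tailed, non-locally-determined functionals; the decay must again be extracted from $\bE\Vert A_n\cdots A_{n-k+1}\Vert\le C\beta^k$ multiplying the truncated second moment.
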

\begin{proof} Parts (a) and (c) follow from Lemma \ref{LmWRen} and Markov inequality (cf. the proof of Lemma 4.1 in \cite{DG}).
%will not be discussed here
%because they are simple and don't differ from the proofs of parts (a) and (c) of Lemma 4.1 in \cite{DG}.

The proofs of (b), (d), and (e) in Lemma 4.1 in \cite{DG} do not go through directly in the case of
the strip. We shall give a complete proof of (b) and the required adaptations in the cases (d) and (e)
are the same.

\textit{Proof of} (b). Set $\chi_n=
I_{w_{n}<\delta N^{1/s}}$; this notation will be used only
within the proof of Lemma \ref{LmLow}. Denote $\tY_\delta=\sum_{w_n<\delta N^{1/s}} (\xi_n-\rho_n).$ Then
$\mathbb{E}_\omega(\tY_n)=0$ and so it suffices to show that
$\Var_\omega(\tY_\delta)=O(\delta^{2-s} N^{2/s})$
except for $\omega$ from a set of small probability.
It follows from Lemma \ref{return} that if $K$ is sufficiently large, $n_2-n_1>K\ln N$,  and
$\omega\not\in \Omega_{n_1, K\ln N, \theta_0}$  (see \eqref{returnOmega} for the definition of this set) then
$$\Cov_\omega\left(\xi_{n_1}, \xi_{n_2}\right)\le CN^{-100}.$$
Therefore
\begin{equation}
\begin{aligned}
\label{VAR}
\Var_\omega(\tY_\delta)
&=\left|o(1)+\sum_{n_2-K\ln N<n_1<n_2} 2\chi_{n_1} \chi_{n_2}
\Cov_\omega\left(\xi_{n_1}, \xi_{n_2}\right)
+ \sum_{n} \chi_n \Var_\omega\left(\xi_{n} \right)\right|\\
&\leq
1+\Const\sum_{n_2-K\ln N<n_1\leq n_2} \rho_{n_1} \rho_{n_2} \chi_{n_1} \chi_{n_2} \\
\end{aligned}
\end{equation}
where the summation is over pairs with $\rho_{n_i}<\delta N^{1/s}$. The last step uses
$$\left|\Cov_\omega\left(\xi_{n_1}, \xi_{n_2}\right)\right| \leq
\sqrt{\Var_\omega\left(\xi_{n_1}\right) \Var_\omega\left(\xi_{n_2}\right)}
\leq C\rho_{n_1} \rho_{n_2}.$$
Here, apart of the Cauchy-Schwartz inequality, we use the fact that if the walk
starts from $z\in L_{n_0}$, then $\mathbb{P}_{\omega,z}\{\xi_{n,j}\ge1\}\ge\varepsilon_0$
for $n\ge n_0$ (the existence of $\varepsilon_0$ is due to \eqref{EqC2*}). The latter
inequality implies that $\sqrt{\Var_\omega\left(\xi_{n,i})\right)}
\le \Const \rho_{n,i}$ with the constant depending only on $\varepsilon_0$.
We use here elementary explicit expressions for all involved quantities, see \eqref{ExpVar}.
%the fact that $\xi_{n,i}$ conditioned on the walk visiting $(n,i)$ has
%geometric distribution and one has .)

Next, we have to estimate the expectation of the last sum in \eqref{VAR}.
To this end introduce
$$\mathfrak{r}_n=1+||A_{n+1}||+||A_{n+2}A_{n+1}||+...\equiv\sum_{j=0}^{\infty}||A_{n+j}...A_{n+1}||.$$
It is clear from \eqref{ExpectF10} and the strong ellipticity condition \eqref{EqC2*} that there are constants
$c_1,\ c_2$ such that $ c_1\mathfrak{r}_n<w_n< c_2\mathfrak{r}_n $. Hence there is a $C$ and $t_0$ such that
for $t>t_0$ uniformly in $\zeta$
\begin{equation}\label{estimate}
\mathbf{P}\{\mathfrak{r}_n > t|\zeta_n=\zeta\}\le Ct^{-s}\text{ and }
\mathbf{E}\left[\left(\mathfrak{r}_n^2|\zeta_n=\zeta\right)
I_{(\mathfrak{r}_n|\zeta_n=\zeta)\le t}\right] \le \Const\, t^{2-s}.
\end{equation}
We also have that for $k>0$
$$\rho_{n-k}\le C\sum_{j=0}^{\infty}||A_{n-k+j}...A_{n-k+1}||\le
C\sum_{j=0}^{k-1}||A_{n-k+j}...A_{n-k+1}||+C||A_{n}...A_{n-k+1}||\mathfrak{r}_n.$$
and
$$
\rho_{n-k}\rho_n \le C\sum_{j=0}^{k-1}||A_{n-k+j}...A_{n-k+1}||\mathfrak{r}_n+C||A_{n}...A_{n-k+1}||\mathfrak{r}_n^2.
$$
To estimate $\mathbf{E}(\mathfrak{r}_{n-k}\mathfrak{r}_n)$ we condition on $\zeta_n$ and use the fact that
the conditioned random variables $||A_{n-k+j}...A_{n-k+1}|||\zeta_n$ and $\mathfrak{r}_n|\zeta_n$ are independent.
Therefore
$$
\bE\left(||A_{n-k+j}...A_{n-k+1}||\mathfrak{r}_n\right)=
\bE\left[\bE\left(||A_{n-k+j}...A_{n-k+1}|||\zeta_n\right)\bE\left(\mathfrak{r}_n|\zeta_n\right)\right]\le
C\beta^j,
$$
where $\beta:=r(1)<1$  and $\bE\left(\mathfrak{r}_n|\zeta_n\right)<\Const$ since $s>1$.

Similarly, but this time using also \eqref{estimate} we obtain
$$
\begin{aligned}
\bE\left(||A_{n}...A_{n-k+1}||\mathfrak{r}_n^2\chi_n\right)&\le
\bE\left[\bE\left(||A_{n}...A_{n-k+1}|||\zeta_n\right)\bE\left(\mathfrak{r}_n^2
I_{(\mathfrak{r}_n|\zeta_n=\zeta)\le C \delta N^{\frac{1}{s}}}|\zeta_n\right)\right]\\
&\le C\beta^k\delta^{2-s}N^{\frac{2-s}{s}}.
\end{aligned}
$$
Hence
$$
\bE\left(\rho_{n-k}\rho_{n} \chi_{n}\right) \le
\Const\left(1+\beta^k\delta^{2-s}N^{\frac{2-s}{s}}\right)
$$
and therefore for $N>N_\delta$
$$
 \bE\left(\sum_{k=0}^{K\ln N}(\rho_{n-k}\rho_{n} \chi_{n})\right)
\leq \Const \left(\ln N +\delta^{2-s}N^{\frac{2-s}{s}}\right)\le C\delta^{2-s}N^{\frac{2-s}{s}}.
$$
Finally
$$
\begin{aligned}
\bE\left(\sum_{n_2-K\ln N<n_1\le n_2} \rho_{n_1} \rho_{n_2}\chi_{n_1}\chi_{n_2}\right)
&\leq \bE\left(\sum_{n_2-K\ln N<n_1\le n_2} \rho_{n_1} \rho_{n_2}\chi_{n_2}\right)\\
&\leq C N\delta^{2-s}N^{\frac{2-s}{s}}=C \delta^{2-s} N^{2/s}.
\end{aligned}
$$
Part (b) is proven. The proofs of parts (d) and (e) follow the proof of the corresponding statement of Lemma 4.1 in \cite{DG}
with the modifications similar to ones presented here.
\end{proof}
We are now prepared to explain the main further steps of the proof.

We consider the case $s\in (0,1)$; other cases are treated similarly.
Present the time spent by the walk in $[L_0,L_{N-1}]$ as
\begin{equation}\label{TN}
T_N=\sum_{n=0}^{N-1}\xi_{n}\equiv\sum_{n=0}^{N-1}\sum_{i=1}^{m} \xi_{(n,i)}=S_1+\ S_2+\ S_3,
\end{equation}
where
$$\begin{aligned}
S_1=&\sum_{n:\,w_n<\delta N^{1/s},\,n\,\not\in\,\text{any trap}} \xi_n \\
S_2=&\sum_{n:\, w_n\ge\delta N^{1/s},\,n\text{ is not in a trap}}\xi_n\\
S_3=&\sum_{n:\,n\text{ is in a trap}}\xi_n.
\end{aligned}
$$
By Lemma \ref{LmLow}, (a) we have that $\bE(S_1)\le\Const N^{1/s}\delta^{1-s}$.

Next, denote
$$\brOmega_{N,\delta}^{(1)}:=\{\omega: \text{ there is }n\in[0,N-1] \text{ s. t. }
w_n\ge\delta N^{1/s},\,n \text{ is not in a trap}\}.$$
It follows from \eqref{Eq4Peaks} that
$$\bP\left(\brOmega_{N,\delta}^{(1)}\right)\le N\bP\left(w_{n}\geq \delta
N^{1/s} \hbox{ and $n$ is not in a trap}  \right)\le\Const \beta^M.$$
But then
$$
\bP(S_2\not=0)\le \bP\left(\brOmega_{N,\delta}^{(1)}\right)\le\Const \beta^M\to0\text{ as } N\to\infty.
$$
We thus have that for $\omega\not\in\brOmega_{N,\delta}^{(1)}$
$$
\mathfrak{t}_N=N^{-\frac{1}{s}}S_3+N^{-\frac{1}{s}}S_1=N^{-\frac{1}{s}}S_3+R_N,
$$
where $R_N:=N^{-\frac{1}{s}}S_1$ and satisfies the requirements of (a), Theorem \ref{ThMain}.

It remains to analyze $S_3$ which is the main contributor to $T_N$ coming from the sum
over the traps in $[L_0,L_{N-1}]$. Let us present it as follows:
$$
N^{-\frac{1}{s}}S_3=\sum_{n:\, n\text{ is marked}}N^{-\frac{1}{s}}\sum_{j=0}^M \xi_{n-j}.
$$
Since $n$ is marked, we can choose a $k$ such that $\rho_{n,k}\ge m^{-1}\delta N^{\frac{1}{s}}$.
Now present
$$
\sum_{j=0}^M \xi_{n-j}=\sum_{j=0}^M\sum_{i=1}^m
\left(\frac{\xi_{n-j,i}}{\rho_{n-j,i}}-\frac{\xi_{n,k}}{\rho_{n,k}}\right)\rho_{n-j,i}+
\frac{\xi_{n,k}}{\rho_{n,k}}\sum_{j=0}^M \rho_{n-j}.
$$
Next, we shall use Corollary \ref{CorA2} to estimate $\left\|\frac{\xi_{n-j,i}}{\rho_{n-j,i}}-
\frac{\xi_{n,k}}{\rho_{n,k}}\right\|$, where $\|f\|:=\sqrt{\mathbb{E}_{\omega}(|f|^2)}$ for a function $f$
on the space of trajectories of the walk. We have:
$$
\left\|\frac{\xi_{n-j,i}}{\rho_{n-j,i}}-\frac{\xi_{n,k}}{\rho_{n,k}}\right\|\le
\sum_{r=n-j}^{n-1} \left\|\frac{\xi_{r,i}}{\rho_{r,i}}-\frac{\xi_{r+1,i}}{\rho_{r+1,i}}\right\|
+\left\|\frac{\xi_{n,i}}{\rho_{n,i}}-\frac{\xi_{n,k}}{\rho_{n,k}}\right\|
\le C \sum_{r=n-j}^{n-1} \frac{1}{\sqrt{\rho_{r,i}}}+\frac{C}{\sqrt{\rho_{n,k}}}.
$$
Condition \eqref{EqC2*} together with \eqref{ExpectF} imply that there is $\eps_0>0$ such that
$\rho_{n-j,i}\ge \eps_0 ||A_{n-j} \dots A_{n}|| \rho_{n,k}$. Hence for $n-j$ belonging to a trap, that is $(n-j)\in[n-M,n]$,  we have that
$\rho_{n-j,i}\ge c\eps_0^M \rho_{n,k}\ge cN^{-\bar{\eps}}\rho_n$. (Remember that $M=\ln\ln N$
and therefore these inequalities hold for any $\bar{\eps}>0$ and $N>N_{\bar{\eps}}$.) Thus
$$
\left\|\sum_{j=0}^M\sum_{i=1}^m
\left(\frac{\xi_{n-j,i}}{\rho_{n-j,i}}-\frac{\xi_{n,k}}{\rho_{n,k}}\right)\rho_{n-j,i}\right\|
\le \Const \frac{N^{\bar{\eps}/2}}{\sqrt{\rho_{n}}}\sum_{j=1}^M M \rho_{n-j}
\le \Const \frac{N^\breps}{\sqrt{\rho_{n}}}\sum_{j=1}^M \rho_{n-j}
$$
If for $n$ marked we set
$$
\mathfrak{a}_n=\mathfrak{m}_n^{-1}
\sum_{j=0}^M\sum_{i=1}^m
\left(\frac{\xi_{n-j,i}}{\rho_{n-j,i}}-\frac{\xi_{n,k}}{\rho_{n,k}}\right)\rho_{n-j,i}
$$
then $\|\mathfrak{a}_n\|\le \Const \frac{N^\breps}{\sqrt{\rho_{n}}}\to0$ as $N\to\infty$
and we have
$$\frac{\sum_{j=0}^M \xi_{n-j}}{N^{1/s}}=\left(\frac{\xi_{n,k}}{\rho_{n,k}}
+\mathfrak{a}_n\right) \frac{\mathfrak{m}_n}{N^{1/s}}.$$
Set $\Gamma^{(N,\delta)}_j=\frac{\xi_{n_j,k_j}}{\rho_{n_j,k_j}}
+\mathfrak{a}_{n_j}$, where $\{n_j\}$ is the collection of marked points.
To finish the proof of statement (a) from Theorem \ref{ThMain},
it remains to check that
$$\{\xi_{n_j,k_j}/\rho_{n_j,k_j}\}_{n_j \text{ is marked}}$$
form a collection of asymptotically independent random variables which also
are asymptotically exponential with mean 1.

The convergence to the exponential distribution is an immediate corollary of two facts:
the conditional random variable $\xi_{n,k}|(\xi_{n,k}\ge1)$ is geometric
and $\mathbb{P}_\omega(\xi_{n,k}=0)\to0$ as $N\to\infty$ (to prove this last assertion apply
Lemma \ref{LemmaA3} with $a=(n,k)$ and $b$ the first point visited by the walker inside layer
$L_n$).

To establish the asymptotic independence remember the construction used in the
proof of Theorem \ref{ThTraps}. We have established there that the marked points
belong to the blocks of length $N^{\eps_3}$ which are separated from each other
by the blocks of length $N^{\frac{\eps_3}{2}}$ and, moreover, there is at most one marked
point in a large block. By Lemma \ref{return}, the $\mathbb{P}_\omega$ probability that the walk
would ever return to block $I_{j-1}$ after having reached $I_j$ is of order
$O\left(\theta_0^{N^{\frac{\eps_3}{2}}}\right)$, where $\theta_0<1$, it follows that
any random variables which are functions of the part of trajectory of the walk
starting at the left end of $I_j$ and restricted to the $N^{\eps_3/2}/2$ neighbourhood
of $I_j$ are independent.

Part (a) of Theorem \ref{ThMain} is proved. Parts (b) and (c) are dealt with in a similar way.

\section{Tail asymptotics.}\label{tail}

\begin{proof}[Proof of Lemma \ref{LmWRen}]
Parts (a) and (b) of the lemma follow from $z^+$ part of Theorem \ref{ThRenContr} applied to
the following Markov process
\begin{equation}
\label{OurMP}
\Phi_n=(\pi_n, v_n, \zeta_n),\quad g_n=(P_{n+1}, Q_{n+1}, R_{n+1}).
\end{equation}
Note that due to \eqref{EqC2*} there exists $\breps$ such that both $A_n$ and $\zeta_n$ map $\bbX$ into
$\bbX_\breps=\{v\in \bbX: v_j\geq \breps\}.$ In \eqref{OurMP} $\pi_n$ and $v_n$ are regarded as elements of $\bbX_\breps$
and $\zeta_n$ is an leement of the set of stochastic matrices. Recall that given $\Phi_n$ and $g_n$ we can reconstruct
$\Phi_{n+1}$ using \eqref{EqZeta}, \eqref{y}, \eqref{DefA} and \eqref{DefV}.

In order to apply Theorem \ref{ThRenContr} we need to check three conditions. The first one is eventual contraction
(equation \eqref{EventualContr}).
Since both $A_n$ and $\zeta_n$ map $\bbX$ into $\bbX_\breps$ we can apply Birkhoff Theorem (see e.g. \cite{L}) which tells us that there is
a constant $\brtheta=\brtheta(\breps)<1$ such that $A_n$ and $\zeta_n$ contract the Hilbert metric on $\bbX_\breps$ at least by factor $\brtheta$
(the contraction of $\pi_n$ part also follows from Lemma \ref{lemma1}). The contraction of $\zeta_n$ part is proven in Appendix \ref{ScIndBC}.

Second, we need to check \eqref{NoNest}. In our setting we have to show that for each $t$ there is $n$ such that
$ \bP(||A_n\dots A_1||>t)>0 .$  If this were false then there would exist $t_0>1$ such that
$||A_n\dots A_1||^\alpha<t_0^\alpha$ for all $n$ with probability 1. This would imply $r(\alpha)\leq 1$
for all $\alpha>0.$ Since $r(0)=1$ and $\ln r(\alpha)$ is
strictly convex we would actually have $r(\alpha)<1$ for all positive $\alpha$ contradicting \eqref{defs}.

Lastly we need to show that \eqref{SpCB} has no solutions. In our setting \eqref{SpCB} takes form
\begin{equation}
\label{SpCBMat}
 e^{i\bru} \fh(\pi_{n-1}, v_{n-1} ,\zeta_{n-1})=e^{i u\ln ||A_n v_{n-1}||} \fh(\pi_{n}, v_{n}, \zeta_{n}).
\end{equation}
Take $(P, Q, R)$ in the support of the environment measure.
Let $\zeta_{(P, Q, R)}$ and $A_{(P, Q, R)}$ be defined by \eqref{EqZeta1} and \eqref{APer} respectively and denote by
$\pi_{(P, Q, R)}$ and $v_{(P, Q, R)}$ the positive eigenvectors of these matrices. Then
$$ \Phi_n\equiv (\pi_{(P, Q, R)} , v_{(P, Q, R)}, \zeta_{(P, Q, R)}), \quad g_n\equiv (P, R, R) $$
is an admissible trajectory. Evaluating \eqref{SpCBMat} along this trajectory we get
$$ e^{i\bru} \fh(\pi_{(P, Q, R)}, v_{(P, Q, R)}, \zeta_{(P, Q, R)})
=e^{i u\ln \lambda_{(P, Q, R)}} \fh(\pi_{(P, Q, R)}, v_{(P, Q, R)}, \zeta_{(P, Q, R)}).
$$
From this we conclude that
$$\lambda_{(P, Q, R)}-\frac{\bru}{u}\in \frac{2\pi}{u} \integers $$
contradicting the non-arithmeticity condition \eqref{NA}.

Hence Theorem \ref{ThRenContr} is applicable giving parts (a) and (b) of the lemma. To prove part (c) note that
$$ \cR_n=\sum_{j\geq n} \pi_j A_j\dots A_{n+1} [\bu_n-l_n(\bu_n) v_n] . $$
Pick a small $\teps$ and split $\cR_n=\cR_n'+\cR_n''$ where
the first term contain the terms with
$j<n+\teps \ln t$ and the second term contain the terms with
$j\geq n+\teps \ln t.$ Choosing $\teps$ small enough we can ensure that $\cR_n'\leq \frac{t}{2}.$
On the other hand for terms in $\cR_n''$ we have $\theta^{n-j}\leq t^{-\teps |\ln\theta|}$ and hence
$\cR_n''<C t^{-\teps |\ln\theta|} w_n.$ Thus
$$\bP(\cR_n>t)\leq \bP\left(w_n> C t^{1+\teps|\ln \theta|}\right) $$
and so part (c) follows from part (b).
\end{proof}

\begin{proof}[Proof of Lemma \ref{LmARen}]
The result follows from $z^-$ part of Theorem \ref{ThRenContr}
applied to the same Markov process \eqref{OurMP} as in the proof of Lemma
\ref{LmWRen}.
\end{proof}

\section{Extensions.}
\label{ScExt}

Here we discuss some extensions of our results which are not used in the proof of Theorem \ref{ThMain} but are helpful in studying
other properties of the walk. The application of these results will be presented in a separate paper.

\subsection{Environment inside the trap.}
Fix $R\ge 1.$ Let $\mathbb{T}_j=[n_j-M_N, n_j]$ be the $j$-th trap. We call $\brn_j\in \mathbb{T}_j$ {\it $R$-center of $\mathbb{T}_j$} if
$\brn_j$ is the rightmost point in $\mathbb{T}_j$ such that $w_{\brn_j}>w_n /R$ for all $n\in \mathbb{T}_j.$ We choose $R$ so that for each $k$ we have
$$\bP(\lambda_n\dots \lambda_{n+k}\in \{R, R^{-1}\})=0.$$
In particular, if for each $k$ we have $\bP(\lambda_{n} \dots \lambda_{n+k}=1)=0$
then we can take $R=1$ so that $\brn_j$ will be the point with the maximal value of $w_{n}.$ Denote
$\omega^{(j)}=\tau^{\brn_j} \omega,$ where $\tau$ is the standard shift on the space of
environments. Theorem \ref{ThTraps} can be strengthened %straightened
in the following way.

\begin{theorem} \label{ThTrapsE}
 Assume that the non-arithmeticity condition \eqref{NA} holds. Then there exists
a probability measure $\tnu_\delta$ on $\Omega$ and
a constant $\bc$ such that the point process
\begin{equation}\label{PoissonE}
\left\{ \left(\frac{{\brn_j}}{N},\,\frac{\mm_{n_j}}{N^{1/s}}, \omega^{(j)}\right)
\right\}
\end{equation}
converges as $N\to\infty$ to a Poisson process on $[0,1]\times [\delta, \infty)\times \Omega$
with the measure $\bc dt' d\mu_\delta(\mm) d\nu_\delta(\omega).$
As $\delta\to 0$
$\mu_\delta$ converges to a measure with
density $\frac{\bc s}{\mm^{s+1}}$ and $\tnu_\delta$ converges to some measure $\tnu.$
\end{theorem}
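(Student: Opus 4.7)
The plan is to enrich the block-decomposition argument used to prove Theorem~\ref{ThTraps} by keeping track of the environment viewed from the trap center. First I would partition $[0,N]$ into large blocks $I_j$ of length $N^{\eps_3}$ separated by small blocks of length $N^{\eps_3/2}$, and reuse verbatim the two facts established in Section~\ref{proofTh5}: each large block contains at most one trap with probability $1-o(1)$, and the relevant random variables attached to distinct blocks can be coupled with an i.i.d.\ family obtained from the $\psi$-based construction with an error negligible on the scale $N^{-1/s}$. The only novelty is that now the indicator vector $\Gamma_j$ records, in addition to the trap mass, the shifted environment $\omega^{(j)}=\tau^{\brn_j}\omega$ landing in a prescribed Borel set $B\subset\Omega$. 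The condition $\bP(\lambda_n\dots\lambda_{n+k}\in\{R,R^{-1}\})=0$ guarantees that $\brn_j$ is almost surely uniquely defined, so that $\omega^{(j)}$ is a measurable function of $\omega$.

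The technical heart of the proof is to upgrade Corollary~\ref{CrMassIntensity} to a joint tail estimate
\[
h_B(t,\delta)=\lim_{N\to\infty} N\,\bP\bigl(\brn_j=n,\ \mm_n>tN^{1/s},\ \tau^n\omega\in B\bigr),
\]
first established for cylinder sets $B$ depending on finitely many coordinates $[-L,L]$ and then extended by a monotone-class argument. To produce this limit I would combine Lemma~\ref{LmARen}(c), which already yields joint convergence of $a_{n-M,n}/t$ with $(\zeta_n,\pi_n,v_n)$, together with the Markov property of the environment: the coordinates of $\omega^{(j)}$ with indices $\ge 0$ are determined by the future beyond $\brn_j$ (controlled by the forward renewal analysis of Lemma~\ref{LmWRen}), while the coordinates with indices $<0$ sit inside the trap and are governed by the conditional law producing a large value of $w_n$ (controlled by the backward renewal analysis of Lemma~\ref{LmARen}). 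Once one knows that the limit defines a finite Borel measure on cylinder sets in $\Omega$, the Kolmogorov extension theorem yields a unique probability measure $\tnu_\delta$ on $\Omega$ together with the factorization $h_B(t,\delta)=h(t,\delta)\,\tnu_\delta(B)$.

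With the joint tail estimate in hand, the Poisson limit follows from the triangular-array Poisson Limit Theorem applied blockwise exactly as in the proof of the Lemma in Section~\ref{proofTh5}, the indicator vectors $\Gamma_j$ now taking values in finite partitions of $[0,1]\times[\delta,\infty)\times\Omega$ rather than of $[0,1]\times[\delta,\infty)$; this gives the intensity $\bc\,dt'\,d\mu_\delta(\mm)\,d\tnu_\delta(\omega)$. The $\delta\to 0$ convergence of $\mu_\delta$ is the content of Theorem~\ref{ThTraps}. For $\tnu_\delta$, tightness is automatic since the space of i.i.d.\ environments satisfying \eqref{EqC2*} is compact in the product topology (on every finite coordinate window the triples $(P_k,Q_k,R_k)$ range in a compact set). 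Any weak accumulation point $\tnu$ is then identified as the distribution of $\omega^{(j)}$ conditioned on $w_0$ being the unique $R$-maximum of $\{w_j\}_{j\in\integers}$ and normalized to be large, which is well-defined by the same renewal-theoretic analysis of Appendix~\ref{AppRenewal} that underlies Lemma~\ref{LmWRen}.

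The main obstacle I anticipate is the joint control of the environment on both sides of $\brn_j$: the coordinates to the right are tied to $(\zeta_{\brn_j},\pi_{\brn_j},v_{\brn_j})$ through the forward tail in Lemma~\ref{LmWRen}, while those to the left are tied to the same triple through the backward tail in Lemma~\ref{LmARen}, and the two one-sided conditional distributions must be stitched together consistently along trajectories passing through $\brn_j$. This uses in an essential way that $(\pi_n,v_n,\zeta_n)$ is a Markov chain whose invariant measure is already identified in the statements of Lemmas~\ref{LmWRen} and~\ref{LmARen}, so that the left-tail and right-tail conditional laws, when coupled by the common value of $(\pi,v,\zeta)$ at the center, assemble into a single measure on the full environment space.
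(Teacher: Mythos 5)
Your overall scheme (big-block/small-block decomposition, at most one trap per block, i.i.d.\ replacement via the $\psi$-construction, triangular-array Poisson limit theorem, renewal tails from Lemmas \ref{LmWRen} and \ref{LmARen}) is the same as the paper's. The structural difference is that the paper does \emph{not} attack the triple $(\brn_j/N,\ \mm_{n_j}/N^{1/s},\ \omega^{(j)})$ directly. It first proves the Poisson limit \eqref{PoissonEW} for $(\brn_j/N,\ w_{\brn_j}/N^{1/s},\ \omega^{(j)})$, using $(R,l)$-maximality and cylinder approximations $\tnu_l\Rightarrow\tnu$, and only then transfers to the mass via the almost-sure relation $\mm_j=w_{\brn_j}\,\cH(\omega^{(j)})$ and the Poisson mapping lemma (Lemma \ref{LmPT}). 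That detour is not cosmetic: it is precisely what produces the product form $\bc\,dt'\,d\mu_\delta(\mm)\,d\tnu_\delta(\omega)$ of the limiting intensity, because in the $(w,\omega)$ coordinates the intensity is a product with a pure power-law $w$-marginal, and the map $(w,\omega)\mapsto(w\cH(\omega),\omega)$ preserves the product structure exactly because $w^{-(1+s)}dw$ is scale invariant (the shape law gets tilted by the fixed density $\cH^s$, independently of the mass level).

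The genuine gap in your version is the asserted factorization $h_B(t,\delta)=h(t,\delta)\,\tnu_\delta(B)$. The Kolmogorov extension theorem cannot deliver this: it only assembles consistent cylinder marginals into a measure on $\Omega$; it says nothing about why the limiting conditional law of $\tau^{\brn_j}\omega$ given $\{\mm_{n_j}>tN^{1/s}\}$ is independent of $t$. Since $\mm_n\approx w_{\brn}\cH(\tau^{\brn}\omega)$, the event $\{\mm_n>tN^{1/s}\}$ couples the scale $w$ with the shape $\omega^{(j)}$, and the $t$-independence of the shape law holds \emph{only} because $\bP(w>u)\sim Cu^{-s}$ is a pure power law, so that conditioning on $\{w\cH(\omega)>t\}$ reweights the shape distribution by $\cH(\omega)^s$ uniformly in $t$. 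You must either insert the paper's two-step argument (prove \eqref{PoissonEW} first, then push forward) or reproduce this tilting computation directly inside your joint tail estimate; as written, the central claim of the theorem --- that the limiting intensity is a product measure --- is assumed rather than proved. The remaining pieces of your outline (uniqueness of $\brn_j$ under the condition on $R$, the stitching of the forward and backward renewal tails through the common value of $(\pi,v,\zeta)$ at the center, and the identification of $\tnu$ as a conditioned law) are consistent with what the paper does.
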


In other words if the walker is trapped, then he sees the environment distributed according to a measure $\tnu.$
This statement extends the results obtained in \cite{Go, K2}.

To prove Theorem \ref{ThTrapsE}
we first show that as $N\to\infty$ and then $\delta\to 0$
\begin{equation}\label{PoissonEW}
\left\{ \left(\frac{{\brn_j}}{N},\,\frac{w_{\brn_j}}{N^{1/s}}, \omega^{(j)}\right)
\right\} \text{ converges to a Poisson process with measure }
\tc dt' \frac{dw}{w^{1+s}} d\tnu^*.
\end{equation}
The proof of this result is similar to the proof of Theorem \ref{ThTraps}. Namely, call $n$
{\it $(R, l)$--maximal} if $w_{n}>w_{n+k}/R$ for $0<k\leq l$ and for each $0<k'<l$
there exists $|k''|\leq l$ such that $w_{n+k'}\leq w_{n+k''}/R.$

Using the same argument as in Lemma \ref{LmLEss} one can show that
$$ \bP\left(\frac{w_n}{t}\in [\bra, \brra]\text{ and } n \text{ is } (R,l)-\text{maximal}|\zeta_n=\brzeta, \pi_n=\brpi, v_n=\brv
\right)
\approx \hf_l(\brzeta, \brpi, \brv) \left(\bra^{-s}-\brra^{-s}\right) t^{-s} $$
and $\hf_l\to \hf$ as $l\to\infty.$
Moreover similarly to Lemma \ref{LmFPos} one can show that $\bP(\hf>0)>0$
(otherwise we would get a contradiction with the fact that each trap has a center).
In addition we have that for each $\brg_{-l}\dots \brg_l$
$$ \bP\left(\frac{w_n}{t}\in [\bra, \brra]\text{ and } n \text{ is } (R,l)-\text{maximal}|\zeta_n=\brzeta, \pi_n=\brpi, v_n=\brv,
g_{n-l}=\brg_{-l}, \dots g_{n+l}=\brg_l
\right)$$
$$\approx \tf(\brzeta, \brpi, \brv, \brg_{-l}, \brg_l) \bP_{\zeta_{n+k}, \pi_{n+k}, v_{n+k}}
\left(w_{n+k} \in \frac{[\bra t, \brra t]}{\lambda_{n+1}\dots \lambda_{n+k}}\right) $$
This implies that if $\cB$ is $\cF_{-l, l}$ measurable then
$$ \bP\left(\frac{w_n}{t}\in [\bra, \brra], n \text{ is } (R,l)-\text{maximal} \text{ and } \tau^{n} \omega \in \cB\right)\approx
\tnu_l(\cB) \tc (\bra^{-s}-\brra^{-s}) t^{-s} $$
and $\tnu_l\Rightarrow\tnu$ as $l\to\infty.$
Now the proof of \eqref{PoissonEW} proceeds similarly to the proof of Theorem \ref{ThTraps}.
To pass from \eqref{PoissonEW} to \eqref{PoissonE} we note that
$\frac{\mm_j}{w_{\brn_j}}$ is well approximated by
$\sum_{|k|<l} l_{n+k}(\bu_{n+k}) \Lambda_{n,k}$ provided that $l$ is sufficiently
large. Here
$$ \Lambda_{n,k}=\begin{cases}
\lambda_{n+1}\dots \lambda_{n+k} & \text{if } k>0 \\
1 & \text{if } k=0 \\
\lambda_{n+k+1}\dots \lambda_{n} & \text{if } k<0
\end{cases} . $$
Accordingly, in the limit $l\to\infty,$ we have
$\mm_j=w_{\brn_j} \cH(\omega^{(j)})$ for some measurable function $\cH.$ Now Lemma \ref{LmPT} shows that
\eqref{PoissonEW} implies \eqref{PoissonE} with $d\tnu=\cH^s d\tnu^*.$

\subsection{Arithmetic case.}
We note that condition \eqref{NA} has been used in Section \ref{tail}
to show that \eqref{SpCBMat} does not have the solution. On the other hand
if \eqref{SpCBMat} has a non-trivial solutions then the analysis of
Appendix \ref{AppRenewal} has to be modified. Namely, the non-arithmetic
local limit theorem (Lemma \ref{LmLLT}) has to be replaced by its arithmetic version.
This will cause replacing $t^{-s}$ in the estimates of Theorem~\ref{ThRenContr}
by $t^{-s} g(\{\ln(t/\Delta)\})$
where $\{\dots\}$ denotes the fractional part,
$\Delta$ is the step of the progression containing the distribution of $\ln \lambda$
and $g$ is some continuous function. As a result the estimates of Section \ref{proofTh5}
have to be replaced by
$$ \bP(\rho_n>t)\sim t^{-s} g_1(\{\ln(t/\Delta)\}), \quad
\bP(\mm_j>t) \sim t^{-s} g_2(\{\ln(t/\Delta)\}). $$
Thus there would exists a measure $\mu$ on $\reals^+$ such that $\mu([t, \infty))=\brg(\{\ln(t/\Delta)\}) t^{-s}$
and the limit points of the distribution of the normalized hitting times will be of the form
$$ \sum_j \Theta_j (\Gamma_j-1) $$
where $\Gamma_j$ are iid mean 1 exponential random variables and $\Theta_j$ is a Poisson process with
measure $\mu_\brDelta$ for some $0\leq \brDelta<\Delta$ where
$\mu_\brDelta(A)=\mu(e^\brDelta A).$

In particular, we would like to note that regardless of condition \eqref{NA} we always have
\begin{equation}
\label{RhoAr}
\bP(\rho_n>t)\leq C t^{-s}.
\end{equation}

\appendix

\section{Occupation times for Markov chains.}
\label{AppOccupation}

We recall two facts about general Markov chains with discrete state space.
First, the number of visits to a given state conditioned on the event that
this state will be visited has a geometric distribution. Second,
consider a Markov chain with transition probabilities $p_{ij}.$
Let $\tp_{jk}$ be the probability that the chain starting at $j$ ever visits $k.$
Condition the chain on having at least one visit to $k.$ Then before coming to $k$ the
chain evolves as a Markov chain with transition probabilities
$$ p^*_{ij}=\frac{p_{ij} \tp_{jk}}{\sum_r p_{ir} \tp_{rk}}. $$
We now use this facts to analyze the joint distribution for the number of visits to
different sites.

Namely let $a$ and $b$ be two states of a transient chain such that
\begin{equation}
\label{HEl}
p_{ab}^{n_0}>\eps, \quad p_{ba}^{n_0}>\eps,
\end{equation}
where $p_{ab}^{n_0}$ denotes the transition probability after $n_0$ steps.

Let $q_a$ ($q_b$) denote the probability that $a$ (respectively $b$) is visited at least once and
$p_a$ ($p_b$) denote the probability that the chain started from $a$ (respectively $b$) does not return to that state
again. Let $\xi_a$ ($\xi_b$) be the number of visits to $a$ (respectively $b$).

\begin{lemma}\label{LemmaA1}
Given $n_0, \eps$ there exists a constant $C$ such that if
$q_a>\eps,$ $q_b>\eps$ then
$$ \Corr(\xi_a, \xi_b)>1-\frac{C}{\EXP(\xi_a)}. $$
\end{lemma}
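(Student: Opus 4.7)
The plan is to decompose the trajectory into excursions between successive returns to $a$, apply the conditional variance formula, and estimate the resulting moments via the hypoellipticity~\eqref{HEl}.

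\smallskip\noindent\textbf{Setup and excursion decomposition.} Since $\xi_a=0$ on $\{\tau_a=\infty\}$, the strong Markov property reduces the problem to the case $X_0=a$; then $\xi_a\sim\mathrm{Geo}(p_a)$, so $\EXP\xi_a=1/p_a$ and $\Var\xi_a=(1-p_a)/p_a^2$. The case of bounded $\EXP\xi_a$ is trivial, so I assume $p_a$ is small. Decomposing $X$ at the successive returns to $a$ yields, by the strong Markov property,
\[
\xi_b=V_1+\cdots+V_{\xi_a-1}+V',
\]
where the $V_i$ are i.i.d.\ copies of the visit-count $V$ of $b$ in a returning excursion from $a$ and $V'$, independent of the $V_i$ and of $\xi_a$, is the analogous quantity in the non-returning excursion. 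Setting $\mu=\EXP V,\ \sigma^2=\Var V,\ \mu'=\EXP V',\ {\sigma'}^2=\Var V'$, the conditional variance formula gives $\Cov(\xi_a,\xi_b)=\mu\Var\xi_a$ and $\Var\xi_b=\mu^2\Var\xi_a+(\EXP\xi_a-1)\sigma^2+{\sigma'}^2$, and hence
\[
\Corr(\xi_a,\xi_b)=(1+\Delta)^{-1/2},\qquad \Delta=\frac{p_a\,\sigma^2}{\mu^2}+\frac{p_a^2\,{\sigma'}^2}{(1-p_a)\,\mu^2}.
\]
By $1/\sqrt{1+x}\ge 1-x/2$, it suffices to bound $\sigma^2/\mu^2$ and ${\sigma'}^2/\mu^2$ by constants depending only on $(n_0,\eps)$.

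\smallskip\noindent\textbf{Bounding $\sigma^2/\mu^2$ via the $\{a,b\}$-skeleton.} Let $Z_k$ be the Markov chain on $\{a,b,\dagger\}$ recording the successive visits of $X$ to $\{a,b\}$ (with $\dagger$ absorbing), with one-step transitions
\[
R=\Prob_a(\tau_a^+<\tau_b),\qquad \alpha=\Prob_a(\tau_b<\tau_a^+),\qquad S=1-R-\alpha
\]
from $a$, and analogous $R_b,\alpha_b,S_b$ from $b$. By the strong Markov property applied at the first visit to $b$ inside a returning excursion, $V$ conditioned on $V\ge 1$ is $\mathrm{Geo}(1-R_b)$; with $\beta=\Prob_a(V\ge 1\mid\text{returning})=\alpha\,q_{ba}/(1-p_a)$ one computes $\sigma^2/\mu^2=(1+R_b-\beta)/\beta\le 2/\beta$. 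Now the first skeleton hitting time $N^*=\min\{k\ge 1:Z_k=b\}$ satisfies
\[
\Prob(N^*\le n_0)=\alpha\bigl(1+R+\cdots+R^{n_0-1}\bigr)\le n_0\,\alpha,
\]
while $\{X_{n_0}=b\}\subseteq\{N^*\le n_0\}$ because whenever $X_{n_0}=b$ the skeleton has taken at most $n_0$ steps and ends at $b$; combined with~\eqref{HEl} this gives $\alpha\ge\eps/n_0$. Since also $q_{ba}\ge p_{ba}^{n_0}>\eps$, one obtains $\beta\ge\eps^2/n_0$ and hence $\sigma^2/\mu^2\le 2n_0/\eps^2$.

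\smallskip\noindent\textbf{Non-returning excursion and conclusion.} Running the same skeleton argument from $b$ gives $\alpha_b\ge\eps/n_0$ and, via the identity $p_a/p_b=(\alpha/\alpha_b)(q_{ba}/q_{ab})$, the two-sided bound $p_b\le (n_0/\eps^2)\,p_a$. A direct computation of $\EXP V'$ and $\EXP (V')^2$ in terms of the skeleton quantities (using that $V'$ is the visit-count to $b$ for $X$ starting from $a$ and conditioned on $\tau_a^+=\infty$) then yields ${\sigma'}^2/\mu^2\le C(n_0,\eps)$. Combining these, $\Delta\le C(n_0,\eps)/\EXP\xi_a$, whence $\Corr(\xi_a,\xi_b)\ge 1-C/\EXP\xi_a$.

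The main obstacle is the lower bound on $\alpha$: condition~\eqref{HEl} only guarantees that $X$ reaches $b$ within $n_0$ steps with probability $\ge\eps$, and those paths may revisit $a$ many times. Passing to the $\{a,b\}$-skeleton converts the $n_0$-step hypoellipticity of $X$ into a directly usable $n_0$-step skeleton hypoellipticity, and the geometric distribution of the $a$-loops of $Z$ then delivers $\alpha\gtrsim\eps/n_0$.
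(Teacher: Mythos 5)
Your argument is correct in outline and takes a genuinely different route from the paper's. Both proofs start from the excursion decomposition at successive returns to $a$, but they diverge at the key step. The paper never computes $\Var(\xi_b)$ or any excursion second moment: it writes $\Cov(\xi_a,\xi_b)=v_{ab}\Var(\xi_a)(1+O(1/\EXP\xi_a))$, swaps the roles of $a$ and $b$ to get $\Cov(\xi_a,\xi_b)=v_{ba}\Var(\xi_b)(1+O(1/\EXP\xi_a))$, and multiplies the two identities, using $v_{ab}v_{ba}=1+O(1/\EXP\xi_a)$ (which follows from $\EXP\xi_b=v_{ab}\EXP\xi_a+O(1)$ and its mirror image) to make $\Var(\xi_b)$ and the excursion variances cancel. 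You instead compute $\Var(\xi_b)$ exactly by the conditional variance formula and must therefore bound $\sigma^2/\mu^2$ and ${\sigma'}^2/\mu^2$ explicitly; the price is the need for a lower bound on $\mu$, which you obtain from the nice skeleton argument giving $\Prob_a(\tau_b<\tau_a^+)\geq\eps/n_0$. That bound is in fact also implicitly needed in the paper (the relative error $O(1/(v_{ab}\EXP\xi_a))$ is only $O(1/\EXP\xi_a)$ if $v_{ab}$ is bounded below, which the paper asserts follows from \eqref{HEl} without proof), so your skeleton lemma supplies a detail the paper glosses over. What the paper's symmetric trick buys is precisely not having to estimate any excursion second moments.

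Two soft spots to repair. First, the reduction to $X_0=a$ is not automatic: on $\{\xi_a=0\}$ one still has $\xi_b>0$ with positive probability, and for a general starting point there is also a pre-$\tau_a$ contribution to $\xi_b$; both affect $\Cov(\xi_a,\xi_b)$ and $\Var(\xi_b)$. These contributions have $O(1)$ second moments (by a Lemma \ref{LemmaA3}--type argument), hence perturb the correlation only by $O(1/\EXP\xi_a)$, but this needs to be said — the paper handles it through the terms $U$ and $W$ in its decomposition, and note that for a general start $\EXP\xi_a=q_a/p_a$ and $\Var\xi_a=q_a(2-q_a-p_a)/p_a^2$ rather than the pure geometric formulas. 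Second, the bound ${\sigma'}^2/\mu^2\leq C(n_0,\eps)$ for the non-returning excursion is asserted rather than proved; it does hold (one bounds $\Prob(V'=n)\leq \alpha\,p_b(1-p_b)^{n-1}(1-\eps)^{\lfloor n/n_0\rfloor}/p_a$ and uses $p_b/p_a\leq C(n_0,\eps)$, which follows from $q_{ab}/p_b=\EXP_a\xi_b=(1-p_a)\mu/p_a+\mu'$ together with your lower bound on $\mu$), but since this term is where the argument could silently fail, the computation should be written out.
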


\begin{proof}
We have $\xi_b=U+V+W$ where $U$ is the number of visits to $b$ before the
first visit to $a,$ $W$ is the number of visits to $b$ after the last visit
to $a$ and $V=\sum_{j=1}^n V_j$ where $V_j$ is the number of visits to $b$ between
$j$-th and $j+1$-st returns to $a.$ Then $V_j$ are iid. Let $v_{ab}=\EXP(V_j).$
Then by \eqref{HEl} we have the following uniform bounds
$$ v_{ab}=O(1), \quad \EXP(U)=O(1), \quad \EXP(W)=O(1). $$
Therefore
\begin{equation}
\EXP(\xi_b|\xi_a=k+1)=k v_{ab}+O(1).
\end{equation}
Hence
\begin{equation}
\label{EXPV}
\EXP(\xi_b)=v_{ab} \EXP(\xi_a)+O(1),
\end{equation}
$$ \EXP(\xi_a \xi_b)=v_{ab} \EXP(\xi_a^2)+O(\EXP(\xi_a)). $$
Combining the last two equalities we obtain
$$ \Cov(\xi_a, \xi_b)=v_{ab} \Var(\xi_a)+O(\EXP(\xi_a))$$
\begin{equation}
\label{CovVar1}
=v_{ab} \Var(\xi_a)\left(1+O\left(\frac{1}{\EXP(\xi_a)}\right)\right)
\end{equation}
where the last step uses the formulas
\begin{equation}\label{ExpVar}
 \EXP(\xi_a)=\frac{q_a}{p_a}, \quad \Var(\xi_a)=\frac{q_a(2-q_a-p_a)}{p_a^2}.
 \end{equation}
%fact that $\xi_a$ has geometric distribution.
Interchanging roles of $a$ and $b$ we get
\begin{equation}
\label{CovVar2}
\Cov(\xi_a, \xi_b)=v_{ba} \Var(\xi_b)\left(1+O\left(\frac{1}{\EXP(\xi_a)}\right)\right).
\end{equation}
In view of \eqref{EXPV} we have
$$v_{ab} v_{ba}=1+O\left(\frac{1}{\EXP(\xi_a)}\right)$$
so multiplying \eqref{CovVar1} and \eqref{CovVar2} we get
$$ \frac{\Cov^2(\xi_a, \xi_b)}{\Var(\xi_a) \Var(\xi_b)}=
1+O\left(\frac{1}{\EXP(\xi_a)}\right). $$
\end{proof}
Let $\Vert\dots \Vert$ denote the $L^2$ norm.

\begin{corollary}\label{CorA2}
Under the conditions of Lemma \ref{LemmaA1} there exists a constant $\brC$ (depending only on $\eps$ and $n_0$) such that
$$ \left\Vert\frac{\xi_a}{\EXP(\xi_a)}-\frac{\xi_b}{\EXP(\xi_b)}\right\Vert \leq \brC
\left[\frac{1}{\sqrt{\EXP(\xi_a)}}+(1-q_a)+(1-q_b)\right]. $$
\end{corollary}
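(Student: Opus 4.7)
The plan is to write $X=\xi_a/\EXP(\xi_a)$ and $Y=\xi_b/\EXP(\xi_b)$, both of which have mean $1$, so that
$$ \left\Vert X-Y\right\Vert^2 = \Var(X-Y)=\Var(X)+\Var(Y)-2\Cov(X,Y). $$
I will use the algebraic identity
$$ \Var(X)+\Var(Y)-2\Cov(X,Y) = \bigl(\sqrt{\Var X}-\sqrt{\Var Y}\bigr)^2 + 2\bigl(1-\Corr(X,Y)\bigr)\sqrt{\Var(X)\Var(Y)} $$
and bound the two terms separately; the desired inequality then follows after taking square roots, since
$\sqrt{\alpha+\beta+\gamma}\le \sqrt{\alpha}+\sqrt{\beta}+\sqrt{\gamma}$.

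First I would dispose of the correlation term. Since $\Corr(X,Y)=\Corr(\xi_a,\xi_b)$, Lemma \ref{LemmaA1} yields $1-\Corr(X,Y)\le C/\EXP(\xi_a)$. Using the explicit formula \eqref{ExpVar} together with $q_a,q_b\ge\eps$ I would verify that $\Var(X)=(2-q_a-p_a)/q_a\le 2/\eps$ and similarly for $\Var(Y)$, so $\sqrt{\Var(X)\Var(Y)}\le 2/\eps$ and the second term is $O(1/\EXP(\xi_a))$.

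Next I would estimate $(\sqrt{\Var X}-\sqrt{\Var Y})^2$. From \eqref{ExpVar}, with $p_a=q_a/\EXP(\xi_a)$,
$$ \Var(X)-1 \;=\; \frac{2(1-q_a)}{q_a}-\frac{p_a}{q_a} \;=\; O(1-q_a)+O\!\left(\tfrac{1}{\EXP(\xi_a)}\right), $$
and analogously $\Var(Y)-1=O(1-q_b)+O(1/\EXP(\xi_b))$. By \eqref{EXPV} and $v_{ab}v_{ba}=1+O(1/\EXP(\xi_a))$ the expectations $\EXP(\xi_a)$ and $\EXP(\xi_b)$ are comparable up to a constant depending only on $\eps,n_0$, so we may replace $\EXP(\xi_b)$ by $\EXP(\xi_a)$ in the error. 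Using the elementary bound $(\sqrt{u}-\sqrt{v})^2\le 2(u-1)^2+2(v-1)^2$ (valid whenever $u,v\ge 0$, via $|\sqrt{u}-1|\le|u-1|$) gives
$$ \bigl(\sqrt{\Var X}-\sqrt{\Var Y}\bigr)^2 \;\le\; C\bigl[(1-q_a)^2+(1-q_b)^2\bigr]+O\!\left(\tfrac{1}{\EXP(\xi_a)^2}\right). $$
Combining both estimates,
$$ \left\Vert X-Y\right\Vert^2 \;\le\; C\!\left[(1-q_a)^2+(1-q_b)^2+\tfrac{1}{\EXP(\xi_a)}\right], $$
which upon taking square roots produces the advertised bound. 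The case $\EXP(\xi_a)\le 1$ is trivial because then $1/\sqrt{\EXP(\xi_a)}\ge 1$ while $\Vert X-Y\Vert \le \Vert X\Vert+\Vert Y\Vert\le \sqrt{2/\eps\,+1}\cdot 2$ is bounded.

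The main (mild) obstacle is simply the careful bookkeeping needed to control $\Var(X)-1$ in terms of the two small parameters $(1-q_a)$ and $1/\EXP(\xi_a)$, and to convert the high-correlation statement of Lemma \ref{LemmaA1} into an $L^2$ estimate via the variance decomposition above; the ellipticity condition \eqref{HEl} enters only through the comparability of $\EXP(\xi_a)$ and $\EXP(\xi_b)$ and through the uniform lower bound on $q_a,q_b$.
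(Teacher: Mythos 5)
Your argument is correct and follows essentially the same route as the paper: both proofs reduce the claim to the correlation bound of Lemma \ref{LemmaA1} together with the observation, via \eqref{ExpVar}, that $\Var(\xi_a)/\EXP(\xi_a)^2-1=O(1-q_a)+O(1/\EXP(\xi_a))$. The only difference is cosmetic — you use the exact identity $\Vert X-Y\Vert^2=(\sqrt{\Var X}-\sqrt{\Var Y})^2+2(1-\Corr(X,Y))\sqrt{\Var(X)\Var(Y)}$, whereas the paper standardizes $\xi_a,\xi_b$ and applies the triangle inequality, noting that the standardized difference has norm $\sqrt{2(1-\Corr(\xi_a,\xi_b))}$.
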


\begin{proof}
We have
$$ \left\Vert\frac{\xi_a}{\EXP(\xi_a)}-\frac{\xi_b}{\EXP(\xi_b)}\right\Vert=
\left\Vert\frac{\xi_a-\EXP(\xi_a)}{\EXP(\xi_a)}-\frac{\xi_b-\EXP(\xi_b)}{\EXP(\xi_b)}\right\Vert$$
$$=
 \left\Vert\frac{\xi_a-\EXP(\xi_a)}{\sqrt{\Var(\xi_a)}}\frac{\sqrt{\Var(\xi_a)}}{\EXP(\xi_a)}-
\frac{\xi_b-\EXP(\xi_b)}{\sqrt{\Var(\xi_b)}}\frac{\sqrt{\Var(\xi_b)}}{\EXP(\xi_b)}
\right\Vert$$
$$\leq \left\Vert\frac{\xi_a-\EXP(\xi_a)}{\sqrt{\Var(\xi_a)}}-
\frac{\xi_b-\EXP(\xi_b)}{\sqrt{\Var(\xi_b)}}
\right\Vert+\left|\frac{\sqrt{\Var(\xi_a)}}{\EXP(\xi_a)}-1\right|+
\left|\frac{\sqrt{\Var(\xi_b)}}{\EXP(\xi_b)}-1\right| $$
Notice that the first term equals to $\sqrt{2(1-\Corr(\xi_a, \xi_b))}$ and so it can be estimated by Lemma \ref{LemmaA1}
while the last two terms can be estimated by \eqref{ExpVar}.
\end{proof}

To use the above corollary we need to estimate $1-q_a.$ This can be done using the following bound.

\begin{lemma}\label{LemmaA3}
Given $\eps, n_0$ there exists a constant $\tC$ such that if \eqref{HEl} holds then
$$ \Prob_b(\xi_a=0)\leq C p_b. $$
\end{lemma}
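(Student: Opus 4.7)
The plan is to decompose the trajectory starting at $b$ into excursions away from $b$. Let $T_a=\inf\{t\geq 1:X_t=a\}$ and $T_b=\inf\{t\geq 1:X_t=b\}$ (with the convention $\inf\emptyset=\infty$), and set
$$\alpha=\Prob_b(T_a<T_b),\quad \beta=\Prob_b(T_b<T_a,\,T_b<\infty),\quad \gamma=\Prob_b(T_a=T_b=\infty),$$
so that $\alpha+\beta+\gamma=1$. Conditioning on the outcome of the first excursion from $b$ and applying the strong Markov property at $T_b$ yields the renewal identity $\Prob_b(\xi_a=0)=\beta\,\Prob_b(\xi_a=0)+\gamma$, whence
$$\Prob_b(\xi_a=0)=\frac{\gamma}{\alpha+\gamma}.$$
Applying the same conditioning to $p_b=\Prob_b(T_b=\infty)$ and then the strong Markov property at $T_a$ on the $\alpha$--event gives
$p_b=\gamma+\alpha(1-q_{ab}),$ where $q_{ab}$ is the probability that the chain started at $a$ ever reaches $b$. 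In particular $p_b\geq\gamma$, so
$$\Prob_b(\xi_a=0)=\frac{\gamma}{\alpha+\gamma}\leq\frac{p_b}{\alpha+\gamma}.$$

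It thus suffices to bound $\alpha+\gamma$ from below, and in fact $\alpha\geq\eps/(n_0+1)$ will do. Condition \eqref{HEl} gives
$$\eps<p_{ba}^{n_0}=\Prob_b(X_{n_0}=a)\leq \Prob_b(T_a\leq n_0).$$
Let $S_0=0$ and $S_k=\inf\{t>S_{k-1}:X_t=b\}$ be the successive return times to $b$, and let $E_k$ denote the event that the chain visits $a$ during its $k$-th excursion (i.e. in the interval $(S_{k-1},S_k]$), with $E_k=\emptyset$ if the $k$-th excursion never begins. Since $S_k-S_{k-1}\geq 1$ one has $S_{k-1}\geq k-1$, so $\{T_a\leq n_0\}\subseteq\bigcup_{k=1}^{n_0+1}E_k$. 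By the strong Markov property applied at $S_{k-1}$ we have $\Prob(E_k)\leq\alpha$, so a union bound gives
$$\eps<\Prob_b(T_a\leq n_0)\leq(n_0+1)\alpha.$$
Combining this with the previous display proves the lemma with $C=(n_0+1)/\eps$.

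The argument is entirely elementary; the only points requiring any care are the excursion-count estimate $k\leq n_0+1$ (which uses only that each excursion lasts at least one time step) and the identification of $\alpha$ as the per-excursion probability of visiting $a$, justified by the strong Markov property at the successive return times $S_{k-1}$. Note that only the second inequality $p_{ba}^{n_0}>\eps$ of condition \eqref{HEl} is needed; the first inequality $p_{ab}^{n_0}>\eps$ plays no role in this lemma.
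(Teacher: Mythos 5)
Your proof is correct. The renewal identity $\Prob_b(\xi_a=0)=\gamma/(\alpha+\gamma)$, the bound $p_b=\gamma+\alpha(1-q_{ab})\geq\gamma$, and the union-bound estimate $\alpha>\eps/(n_0+1)$ all check out (in fact $S_{k-1}\geq k-1$ together with $t\leq n_0$ forces $k\leq n_0$, so $n_0$ excursions already suffice and one may take $C=n_0/\eps$, matching the order of the paper's constant).

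The route is genuinely different from the paper's, though both rest on an excursion decomposition at $b$. The paper conditions on the total number of visits $\xi_b=n$ (geometric with parameter $p_b$, which is where the factor $p_b$ enters), groups the excursions into blocks of $n_0$, and asserts that each block visits $a$ with probability at least $\eps$, giving $\Prob(\xi_a=0\mid\xi_b=n)\leq(1-\eps)^{[n/n_0]}$ and then sums the series. Your argument instead extracts two exact one-excursion identities --- one for $\Prob_b(\xi_a=0)$ and one for $p_b$ --- and compares them, using \eqref{HEl} only to lower-bound the single-excursion hitting probability $\alpha$. What your version buys is that it avoids a subtlety the paper glosses over: conditioned on $\xi_b=n$, the excursions are no longer distributed as unconditioned excursions (they are Doob-transformed to return to $b$), so the per-block bound "$\geq\eps$" of the paper requires an extra justification that your proof simply does not need. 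The paper's version, on the other hand, generalizes more readily to statements about the joint law of $(\xi_a,\xi_b)$ of the kind used in Lemma \ref{LemmaA1}. Your closing observation that only $p_{ba}^{n_0}>\eps$ is used is accurate.
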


\begin{proof}
\eqref{HEl} implies that the probability to visit $a$ before the $n_0$-th return to $b$ is greater than $\eps.$
Accordingly
$$ \Prob(\xi_a=0|\xi_b=n)\leq (1-\eps)^{[n/n_0]}$$
where $[\dots]$ denotes the integer part. Hence
$$ \Prob_b(\xi_a=0)\leq \sum_{n=0}^\infty p_b (1-p_b)^n (1-\eps)^{[n/n_0]} $$
giving the required estimate.
\end{proof}

%Finally let $r_{ba}$ be the probability that the process started from $b$ visits $a.$

%\begin{lemma}
%Given $\eps$ there is a constant $C$ such that if $q_a>\eps,$ $q_b>\eps$ then
%$$ |\Cov(\xi_a, \xi_b)|\leq C r_{ba} \EXP(\xi_a)\EXP(\xi_b). $$
%\end{lemma}

%\begin{proof}
%We have
%$\xi_a=\tU+\tV$ where $\tU$ is the number of visits to $a$ before the first visit to $b$
%and $\tV$ is the number of visits to $a$ after the first visit to $b.$ Then $\tV
 %\end{proof}

\section{Poisson process and stable distributions.}
\label{SSPP}
Let $(X, \mu)$ be a measure space.
Recall that a Poisson process is a point process with values in $X$ such that
if $N(A)$ is the number of points in $A\subset X$
then $N(A_1), N(A_2)\dots N(A_k)$ are mutually independent
if $A_1, A_2\dots A_k$ are disjoint and
$N(A)$ has the Poisson distribution with parameter $\mu(A).$
If $X\subset \reals^d$ and $\mu$ has density $f$ with respect to the Lebesgue measure we
say that $f$ is the intensity of the Poisson process.
\begin{lemma}
\label{LmPT}
(a) If $\{\Theta_j\}$ is a Poisson process on $X$
and $\psi: X\to \tX$ is a measurable map
then $\tTheta_j=\psi(\Theta_j)$ is a Poisson process.
If $X=\tX=\reals$ and $\psi$ is invertible then the intensity of $\tTheta$ is
$$\tf(\theta)=f(\psi^{-1}(\theta)) \left(\frac{d\psi}{d\theta}\right)^{-1}. $$

(b) Let $(\Theta_j, \Gamma_j)$ be a point process on $X\times Z$ such that
$\{\Theta_j\}$ is a Poisson process on $X$
and $\{\Gamma_j\}$ are $Z$-valued random variables
which are i.i.d. and independent of $\{\Theta_k\}$
then $(\Theta_j, \Gamma_j)$ is a Poisson process on $X\times Z.$

(c) If in (b) $X=Z=\reals$ then
$\tTheta=\{\Gamma_j \Theta_j\}$ is a Poisson process. Its intensity is
$$ \tf(\theta)=\bE\left(f\left(\frac{\theta}{\Gamma}\right)\frac{1}{\Gamma}\right). $$
\end{lemma}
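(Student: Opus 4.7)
The plan is to prove the three parts in the natural order: (a) is a direct pushforward computation from the definition, (b) is the classical marking theorem for Poisson processes, and (c) is an immediate combination of the first two. None of the statements is really new, so the task is mostly to package them in a form compatible with the notation used in the rest of the paper.

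For (a), I would argue directly. Given disjoint measurable $\tilde A_1,\dots,\tilde A_k\subset\tilde X$, the preimages $A_i=\psi^{-1}(\tilde A_i)$ are disjoint in $X$, and the counts satisfy $\tilde N(\tilde A_i)=N(A_i)$. Independence and the Poisson law with parameter $\mu(\psi^{-1}(\tilde A_i))$ are inherited from $\{\Theta_j\}$, so $\{\tilde\Theta_j\}$ is a Poisson process with intensity measure $\psi_*\mu$. When $X=\tilde X=\reals$ and $\psi$ is a $C^1$ diffeomorphism, the density $\tilde f(\theta)=f(\psi^{-1}(\theta))(d\psi/d\theta)^{-1}$ is then the standard change of variables applied to $\psi_*\mu$.

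For (b), the cleanest route is via Laplace functionals. For any nonnegative measurable $h:X\times Z\to\reals$, conditioning on $\{\Theta_j\}$ and using that the $\Gamma_j$ are i.i.d.\ and independent of $\{\Theta_j\}$ gives
$$\bE\exp\Bigl(-\sum_j h(\Theta_j,\Gamma_j)\Bigr)=\bE\prod_j \bE_\Gamma\, e^{-h(\Theta_j,\Gamma)}.$$
Setting $\tilde h(x)=-\ln \bE_\Gamma e^{-h(x,\Gamma)}$, the right-hand side is the Laplace functional of the Poisson process $\{\Theta_j\}$ at $\tilde h$, which equals
$$\exp\Bigl(-\int_X(1-\bE_\Gamma e^{-h(x,\Gamma)})\,d\mu(x)\Bigr)=\exp\Bigl(-\int_{X\times Z}(1-e^{-h})\,d(\mu\otimes P_\Gamma)\Bigr).$$
This is the Laplace functional of a Poisson process on $X\times Z$ with intensity $\mu\otimes P_\Gamma$; uniqueness of the Laplace functional closes the argument.

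For (c), apply (b) to conclude that $(\Theta_j,\Gamma_j)$ is Poisson on $\reals\times\reals$ with intensity $f(\theta)\,d\theta\otimes dP_\Gamma$, and then apply (a) with $\psi(\theta,\gamma)=\gamma\theta$. The pushforward intensity is obtained by the substitution $\tau=\gamma\theta$ inside the $\theta$-integral and integrating out $\gamma$, giving $\tilde f(\tau)=\bE(f(\tau/\Gamma)/\Gamma)$ as stated. The only mildly delicate point — and the one place a reader might stumble — is that $\psi$ in (a) need not be injective, so distinct points of the original process could a priori collide under $\psi$; in the application to (c) this is harmless because $\psi_*(f(\theta)d\theta\otimes dP_\Gamma)$ is absolutely continuous in $\tau$ (the substitution above shows it), so no two image points coincide almost surely. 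The formally safest presentation is to phrase (a) directly in terms of intensity measures $\tilde\mu=\psi_*\mu$ rather than counting collisions of labels.
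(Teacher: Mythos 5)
Your proof is correct. The paper itself gives no proof of this lemma: Appendix B is explicitly a collection of standard facts about Poisson processes, and Lemma 3 (mapping theorem, marking theorem, and their composition) is stated without argument. Your write-up is the standard one — pushforward of the intensity measure for (a), the marking theorem via Laplace functionals for (b), and their composition with a change of variables for (c) — and it is sound, including the correct observation that non-injectivity of $\psi$ is harmless when the pushforward intensity is non-atomic. The only cosmetic point is that in (c) the Jacobian should strictly be $1/|\Gamma|$ for a general real-valued $\Gamma$; in the paper's application the $\Gamma_j$ are mean-one exponentials, hence positive, so the formula as stated applies.
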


%\begin{lemma}
%\label{PChar}
%Let $\Theta$ be Poisson process on $X,$ $\psi:X\to\reals$ a measurable function and
%$$ V=\sum_j \psi(\theta_j). $$
%Then
%$$\bE(\exp(ivV))=\exp\left[\int\left(\exp(iv\psi(\theta)-1\right)d\mu(\theta)\right].$$
%Accordingly,
%\begin{equation}
%\label{PExp}
%\bE(V)=\int \psi(\theta) d\mu(\theta)
%\end{equation}
%and
%\begin{equation}
%\label{PVar}
%\Var(V)=\int \psi^2(\theta) d\mu(\theta)
%\end{equation}
%\end{lemma}

\begin{lemma}
\label{LmPSt}
(a) If $0<s<1$ and $\Theta_j$ is a Poisson process with intensity $\theta^{-(1+s)}$ then
$ \sum_j \Theta_j $ has stable distribution of index $s.$

(b) If $1<s<2$ and $\Theta_j$ is a Poisson process with intensity $\theta^{-(1+s)}$ then
$$ \lim_{\delta\to 0}
\left[\left(\sum_{\delta<\Theta_j} \Theta_j\right)-\frac{1}{(s-1)\delta^{s-1}} \right]$$
has stable distribution of index $s.$

(c) If $s=1$ and $\Theta_j$ is a Poisson process with intensity $\theta^{-2}$ then
$$ \lim_{\delta\to 0} \left[\left(\sum_{\delta<\Theta_j} \Theta_j\right)-|\ln\delta| \right]$$
has stable distribution of index $1.$
\end{lemma}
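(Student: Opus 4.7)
The plan is to compute the Laplace transform (for (a)) or the characteristic function (for (b), (c)) of the (centered) sum using the standard Laplace functional of a Poisson process: for a Poisson point process $\{\Theta_j\}$ on $(0,\infty)$ with intensity measure $\mu$ and any measurable $f$ for which the integral below converges,
\[
\EXP\!\left[\exp\!\left(-\sum_j f(\Theta_j)\right)\right]=\exp\!\left(-\int_0^\infty (1-e^{-f(\theta)})\,\mu(d\theta)\right).
\]
In each case this formula, combined with a scaling argument, identifies the limit via its L\'evy--Khintchine representation as a stable law.

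For part (a), take $f(\theta)=\lambda\theta$ with $\lambda>0$. The integral $\int_0^\infty(1-e^{-\lambda\theta})\theta^{-(1+s)}d\theta$ is finite: near $0$ use $1-e^{-\lambda\theta}=O(\theta)$ together with $s<1$, and near $\infty$ the bound $1-e^{-\lambda\theta}\le 1$. Substituting $u=\lambda\theta$ gives the integral equal to $C_s\lambda^s$, the Laplace transform of a one-sided stable law of index $s$. Almost-sure finiteness of $\sum_j\Theta_j$ follows from $\EXP\!\left[\sum_j\Theta_j\mathbf{1}_{\Theta_j\le 1}\right]=\int_0^1\theta^{-s}d\theta<\infty$ together with $\mu((1,\infty))=1/s<\infty$, which makes the sum over $\Theta_j>1$ a finite sum a.s.

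For part (b), $1<s<2$, use characteristic functions. Since $\int_\delta^\infty\theta\cdot\theta^{-(1+s)}d\theta=\frac{1}{(s-1)\delta^{s-1}}$, the Laplace functional (applied with complex exponents, justified by absolute integrability) gives
\[
\EXP[\exp(iuS_\delta)]=\exp\!\left(\int_\delta^\infty\bigl(e^{iu\theta}-1-iu\theta\bigr)\theta^{-(1+s)}d\theta\right).
\]
Near $0$, $e^{iu\theta}-1-iu\theta=O(\theta^2)$, so the integrand is $O(\theta^{1-s})$, integrable because $s<2$; near $\infty$, $|e^{iu\theta}-1-iu\theta|\le 2+|u|\theta$ gives an integrand of order $\theta^{-s}$, integrable because $s>1$. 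By dominated convergence the exponent tends, as $\delta\to 0$, to $\int_0^\infty(e^{iu\theta}-1-iu\theta)\theta^{-(1+s)}d\theta$, the L\'evy exponent of a one-sided $s$-stable law. Existence of the limiting random variable and distributional convergence then follow from L\'evy's continuity theorem. For (c), $s=1$, split
\[
\int_\delta^\infty(e^{iu\theta}-1)\theta^{-2}d\theta=\int_\delta^1(e^{iu\theta}-1-iu\theta)\theta^{-2}d\theta+iu|\ln\delta|+\int_1^\infty(e^{iu\theta}-1)\theta^{-2}d\theta,
\]
using $\int_\delta^1\theta^{-1}d\theta=|\ln\delta|$. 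The centering by $-|\ln\delta|$ contributes $-iu|\ln\delta|$ to the log characteristic function, exactly cancelling the divergent middle term; the remaining two integrals have finite limits as $\delta\to 0$, producing the log characteristic function of a $1$-stable law.

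The main obstacle is purely bookkeeping in parts (b) and (c): one must simultaneously handle the non-integrability of $\theta^{-(1+s)}$ at $0$ (resolved by the $iu\theta$ compensator intrinsic to the Poisson formula) and at $\infty$ (resolved by the explicit constants subtracted in the statement), and verify that the truncated Poisson sums converge as $\delta\to 0$. The latter is justified by the identity $\Var\!\left(\sum_{\Theta_j\in(\delta,1]}\Theta_j\right)=\int_\delta^1\theta^{1-s}d\theta$, which remains bounded as $\delta\to 0$ whenever $s<2$, giving $L^2$-convergence of the centered truncated sums; combined with the already finite part over $(1,\infty)$ this makes the limiting random variables well defined.
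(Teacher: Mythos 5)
Your proof is correct. The paper states Lemma \ref{LmPSt} in Appendix \ref{SSPP} without proof, as a standard fact relating Poisson processes to stable laws, and your computation via the Laplace functional and the L\'evy--Khintchine representation is exactly the canonical argument one would supply; the only cosmetic point is that in your final paragraph the $L^2$-convergence of the centered truncated sums follows not from mere boundedness of $\int_\delta^1\theta^{1-s}\,d\theta$ but from the convergence of $\int_0^1\theta^{1-s}\,d\theta$ (so the tail integrals vanish and, by independence over disjoint annuli, the sums are Cauchy in $L^2$), though L\'evy's continuity theorem already gives the distributional convergence that the statement actually requires.
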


\section{Renewal theorem for a system of contractions.}\label{AppRenewal}
\subsection{Main result.}
Let $M_1$ and $M_2$ be compact metric spaces,
and  let $\omega=\{g_j\}$ be a sequence of iid $M_2$ valued random variables.
Suppose that there is a $C^\eta$ map $G: M_1\times M_2\to M_1.$
%such that for each $g\in M_2,$ the map
%$G(\cdot, g)$ is a contraction of $ M_1.$
Consider Markov process on $M_1$
$$\Phi_{j+1}=G_j(\Phi_j), \text{ where }G_j(\Phi)=G(\Phi, g_j).$$
We suppose that the maps $G_j$ are contractions in the sense that there exist
constants $C$ and $\theta<1$ such that if $\Phi_j'$ and $\Phi_j''$ are two
realizations
of this Markov chain starting from $\Phi_0'$ and $\Phi_0''$ and evolving in the same
environment $\omega$ then with probability 1 we have
\begin{equation}
\label{EventualContr}
d(\Phi_j', \Phi_j'')<C \theta^j.
\end{equation}
Denote $\tomega=\{(g_j, \Phi_j)\},$
$\Omega=(M_2)^\integers,$ $\tOmega=(M_1\times M_2)^\integers.$
Let $\Prob$ be the distribution of $\omega.$ In view of \eqref{EventualContr} there is
unique stationary $\tProb$ distribution of $\tomega$ whose projection onto
$\Omega$ is $\Prob.$
Namely, conditioned on $\omega$ the distribution of $\{\Phi_j\}$ is a delta measure
concentrated at $\{\brPhi_j\}$ where $\brPhi_j$ is constructed as follows.
Take $\Phi^*\in  M_1$ and let
$\Phi_{k,j}=G_{j-1} \dots G_{k+1} G_k (\Phi^*).$ Then
$\brPhi_j=\lim_{k\to-\infty} \Phi_{k,j}. $
Let $b(\Phi, g)$ be a positive $C^\eta$ function on $M_1\times M_2$ and
$\Delta(\tomega)$ be a positive continuous function on $\tOmega.$
Denote $b_j=b(\Phi_j, g_j),$ $\Delta_j=\Delta(\tau^j \omega)$ and $\tau$ denotes the shift.
For $k>0$ denote
$$z_{n, k}^+=\sum_{j=1}^{k} b_n b_{n+1} \dots b_{n+j} \Delta_{n+j} , \quad
z_{n, k}^-=\sum_{j=1}^{k} b_n b_{n-1} \dots b_{n-j} \Delta_j .
$$
Let
$$z_n^+=\lim_{k\to\infty} z_{n,k}^+, \quad z_n^-=\lim_{k\to\infty} z_{n,k}. $$
Denote $a=\ln b.$
Suppose that $\tEXP(a)<0$ but for any $t$ there exists $N$ such that
\begin{equation}
\label{NoNest}
 \tProb\left(\prod_{j=1}^N b_j>t\right)>0 .
\end{equation}

Denote by $\IS^1$ a set of complex numbers of absolute value 1.

\begin{theorem}
\label{ThRenContr}
Suppose that
for any numbers $u, \bru \in \reals,$ there exists no continuous function $\fh: M_1\to \IS^1$
such that the following equation is satisfied $\tProb$ almost surely
\begin{equation}
\label{SpCB}
e^{iu a(\Phi, g)}=e^{i\bru} \frac{\fh(\Phi)}{\fh(G(\Phi, g))}.
\end{equation}
Then there is are constant $s>0,$ $\brs>s$ and $\brC>0$ such that
such that

(a) If $k>\brC \ln t$ then
$$ \Prob(z_{n}^+-z_{n,k}^+>1)< \brC t^{-\brs}, \quad
\Prob(z_{n}^--z_{n,k}^->1)< \brC t^{-\brs}.
$$

(b) There exists a function $f(\Phi)$ such that
if $k>\brC t$ then
$$ \Prob(z_{n,k}^+>t| \Phi_0=\Phi)\sim f(\Phi) t^{-s} . $$
In particular
$$ \Prob(z_{n}^+>t| \Phi_0=\Phi)\sim f(\Phi) t^{-s} . $$

(c) There exists a measure $\tnu$ on $\tOmega$ such that for any continuous function $H$ on
$\tOmega$ the following asyptotics holds if $k >\brC \ln t.$
$$ \tEXP(1_{z_{n,k}^->t} H(\tomega))\sim t^{-s} \tnu(H). $$
In particular
$$ \tEXP(1_{z_{n}^->t} H(\tomega))\sim t^{-s} \tnu(H). $$
\end{theorem}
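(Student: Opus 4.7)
The first step is to reduce the problem to a Markov renewal statement. Observe that $z^+_n$ satisfies a Kesten-type recursion
\begin{equation*}
z^+_n = b_n z^+_{n+1} + b_n b_{n+1}\Delta_{n+1},
\end{equation*}
so if we write $S_k = a_n + a_{n+1}+\cdots + a_{n+k}$ with $a_j=\ln b_j$, then $z^+_{n,k}$ is comparable, up to multiplicative factors bounded away from $0$ and $\infty$ (using continuity of $\Delta$ on the compact space $\tOmega$ and the lower bound coming from \eqref{NoNest}), to the running sum $\sum_{j} e^{S_j}$, whose tail in turn is dominated by its single largest term $\max_{0\leq j\leq k}e^{S_j}$ in view of the heavy-tailed big-jump principle. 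The problem therefore reduces to computing the tail of the running maximum of the Markov additive walk $(S_k,\Phi_{n+k})$.

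The key analytic object is the family of transfer operators acting on $C^\eta(M_1)$,
\begin{equation*}
(L_\alpha f)(\Phi)=\EXP\bigl[b(\Phi,g)^\alpha\, f(G(\Phi,g))\bigr].
\end{equation*}
The eventual-contraction property \eqref{EventualContr} combined with H\"older regularity of $b$ and $G$ yields a Doeblin--Fortet inequality of the form $\|L_\alpha^N f\|_{C^\eta}\leq c_1\|f\|_{C^0}+c_2\theta^{N\eta}\|f\|_{C^\eta}$, so $L_\alpha$ is quasi-compact on $C^\eta(M_1)$. An Ionescu-Tulcea--Marinescu / Perron--Frobenius argument on the cone of positive H\"older functions produces a simple dominant real eigenvalue $\lambda(\alpha)>0$ with strictly positive H\"older eigenfunction $h_\alpha$ and positive eigenmeasure $\nu_\alpha$. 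Standard perturbation theory shows that $\log\lambda(\alpha)$ is real-analytic and strictly convex; since $(\log\lambda)'(0)=\tEXP(a)<0$ and \eqref{NoNest} forces $\lambda(\alpha)>1$ for some large $\alpha$, there is a unique $s>0$ with $\lambda(s)=1$.

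Now I perform the Cram\'er tilt to a measure $\brbP$ specified on cylinders by
\begin{equation*}
\left.\frac{d\brbP}{d\tProb}\right|_{\cF_{[n,n+k]}}=\frac{h_s(\Phi_{n+k+1})}{h_s(\Phi_n)}\,b_n^s b_{n+1}^s\cdots b_{n+k}^s.
\end{equation*}
Under $\brbP$ the chain $(\Phi_j)$ remains Markov and $S_k$ becomes a random walk (over the Markov base) with positive drift $(\log\lambda)'(s)>0$, so $S_k\to+\infty$ $\brbP$-a.s. Writing $\tau_t=\min\{j:S_j\geq\ln t\}$, one obtains the exact identity
\begin{equation*}
\tProb\bigl(\max_{0\leq j\leq k}S_j\geq\ln t\bigr)=t^{-s}\,\brbE\bigl[h_s(\Phi_n) h_s(\Phi_{n+\tau_t})^{-1} e^{-s(S_{\tau_t}-\ln t)}\mathbf{1}_{\tau_t\leq k}\bigr].
\end{equation*}
The Markov renewal theorem for $S_k$ under $\brbP$ shows that the joint distribution of $(\Phi_{n+\tau_t},S_{\tau_t}-\ln t)$ converges as $t\to\infty$ to an explicit stationary overshoot distribution, yielding the asymptotic $f(\Phi)t^{-s}$ of part (b); part (a) follows from a union bound on the first overshoot time together with exponential concentration of $\tau_t/\ln t$ on $\brbP$. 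Parts (c) and the companion statement for $z^-$ are then obtained by the same argument applied to the time-reversed chain: the measure $\tnu$ on $\tOmega$ arises as the joint distribution of the environment observed from the descent point of the reversed walk, weighted by the overshoot density and normalized by the eigendata.

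The main obstacle is the non-arithmeticity input required by the Markov renewal theorem, namely that the perturbed operator $L_{s+iu}$ has spectral radius strictly less than $1$ for every real $u\neq 0$. Quasi-compactness reduces this to excluding a boundary eigenvalue: if $L_{s+iu}$ had spectral radius $1$, then the twisted operator $\fh\mapsto h_s^{-1}L_{s+iu}(h_s\fh)$ would admit a continuous unimodular eigenfunction, and comparing the eigenvalue identity on $\tProb$-typical trajectories would yield precisely the cocycle equation \eqref{SpCB} with the relevant $\bru$, contradicting the hypothesis. Making this argument rigorous, in particular the H\"older regularity of the candidate eigenfunction $\fh$ and the identification of the phase $\bru$, is the most delicate point of the proof.
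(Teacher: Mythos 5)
Your proposal is correct in substance and rests on the same machinery as the paper: twisted transfer operators on $C^\eta(M_1)$, quasi-compactness from the contraction \eqref{EventualContr}, strict convexity of the pressure, the exponent $s$ determined by the unit leading eigenvalue, and non-arithmeticity obtained by showing that a unimodular boundary eigenvalue of the perturbed operator would produce a solution of \eqref{SpCB}. The organization, however, differs. You tilt once at $\kappa=s$, pass to the measure $\brbP$ under which the additive part has positive drift, and invoke a Markov renewal theorem for the overshoot $(\Phi_{n+\tau_t},S_{\tau_t}-\ln t)$; the paper instead proves a full large deviation theorem with $\frac{e^{-\gamma(\alpha)n}}{\sqrt n}$ prefactors (Theorem \ref{ThLD}, via a local limit theorem for the tilted chain, Lemma \ref{LmLLT}), and then recovers the renewal asymptotics by hand: summing the local estimates over $n$ concentrated near $n\approx Y/\alpha_0$ (Lemma \ref{LmMax}) and performing a first-passage decomposition (Lemma \ref{Lm1stVisit}). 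Your route is shorter if one is willing to cite a Markov renewal theorem valid on a general compact state space; the paper's route is self-contained, essentially re-deriving that theorem from the spectral gap, which is also exactly where the hypothesis on \eqref{SpCB} is consumed in both arguments. One step where you are a bit glib: reducing the tail of $\sum_j \Delta_j e^{S_j}$ to that of $\max_j e^{S_j}$ by a ``big-jump principle'' loses the multiplicative contribution of the local sum around the first passage point, which enters the constant $f(\Phi)$; the paper keeps this via the events $B_{n,l}=\{\sum_{|m-n|<l}\Delta_m e^{y_m}\ge t-\eps\}$ and the second-visit bound of Lemma \ref{LmMax}(b), and you would need the joint overshoot-plus-environment limit (which you do mention for part (c)) rather than the marginal overshoot alone to recover it.
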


\subsection{Renewal theorem and large deviations.}
We will deduce Theorem \ref{ThRenContr} from a large deviation bound.
Let $y_n=\sum_{j=0}^{n-1} a_j.$
\begin{theorem}
\label{ThLD}
Suppose that \eqref{SpCB}
has no solutions. Then there is a number $\alpha^*>0$
and a strictly convex analytic function $\gamma: [0, \alpha^*)\to \reals$ such that
\begin{equation}
\label{InfDer}
\lim_{\alpha\to\alpha^*} \gamma'(\alpha)=+\infty
\end{equation}
and

(a) If $\alpha>\alpha^*$ then for each $\beta\in\reals,$ $\Prob_\Phi(y_n\geq \alpha n)=\cO(e^{-\beta n});$

(b) If $\tEXP(a)<\alpha<\alpha^*$ then for any $J\in [0, \infty]$ for any continuous function $H$ on~$\tOmega$
$$ \EXP_\Phi\left(1_{y_n-\alpha n\in J} H\left(\tau^n \tomega) \right)\right)\approx
\frac{e^{-\gamma(\alpha) n}}{\sqrt n} \nu_\alpha(H) h_\alpha(\Phi)
\int_J e^{-\gamma'(\alpha)t} \gamma'(\alpha) dt, $$

(c) If $\alpha<\alpha^*$ then for any $J\in [0, \infty]$ for any continuous functions $H, \hH$ on $\tOmega$
$$ \tEXP\left(1_{y_n-\alpha n\in J} \hH\left(\tomega\right)
H\left(\tau^n \tomega \right)\right)\approx
\frac{e^{-\gamma(\alpha) n}}{\sqrt n} \hnu_\alpha(\hH)\nu_\alpha(H)
\int_J e^{-\gamma'(\alpha)t} \gamma'(\alpha) dt.$$
\end{theorem}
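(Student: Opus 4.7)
The plan is to derive Theorem \ref{ThLD} by spectral analysis of a one-parameter family of twisted transfer operators, in the spirit of the Nagaev--Guivarc'h method adapted to the contracting random-map setting. For complex $u$ I put
\[
(\cL_u f)(\Phi) \de \EXP\bigl(e^{u a(\Phi, g)} f(G(\Phi, g))\bigr)
\]
acting on the Banach space $C^\eta(M_1)$ of $\eta$-H\"older functions, where $g$ has the common distribution of the $g_j$. The key identity $(\cL_u^n f)(\Phi) = \EXP_\Phi(e^{u y_n} f(\Phi_n))$ shows that every quantity in the theorem is encoded in iterates of $\cL_u$.

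Next I would establish quasi-compactness of $\cL_u$ via a Doeblin--Fortet (Lasota--Yorke) estimate
\[
\|\cL_u^n f\|_{C^\eta} \le C \theta^{n\eta} \|f\|_{C^\eta} + C_n \|f\|_\infty,
\]
in which the H\"older seminorm contraction comes directly from \eqref{EventualContr}. By Ionescu-Tulcea--Marinescu the essential spectral radius is strictly smaller than the spectral radius. Since $\cL_0$ is the Markov operator of $(\Phi_n)$ with simple leading eigenvalue $1$, analytic perturbation gives, on a maximal interval $[0, u^*)$, a simple isolated leading eigenvalue $\lambda(u) = e^{\gamma^*(u)}$ of $\cL_u$ with strictly positive H\"older eigenfunction $h_u$ and dual probability eigenmeasure $\nu_u$; the function $\gamma^*$ is then real-analytic, and \eqref{NoNest} rules out $a$ being a coboundary, giving strict convexity. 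Setting $\alpha^* \de \lim_{u \to u^*} (\gamma^*)'(u)$ and defining $\gamma$ to be the Legendre transform of $\gamma^*$, the blow-up \eqref{InfDer} is built into the construction, and the super-exponential bound in part (a) follows because $\gamma$ is either $+\infty$ on $(\alpha^*, \infty)$ (beyond the essential supremum of $a$ the indicator vanishes) or diverges as $\alpha \to \alpha^*$.

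For parts (b) and (c) I would tilt by
\[
d\tProb^{(u)} \de \frac{h_u(\Phi_n)}{h_u(\Phi_0)}\, e^{u y_n - n \gamma^*(u)}\, d\tProb,
\]
under which $(\Phi_n, g_n)$ is a stationary Markov chain with centred increments $a - \alpha$ of finite variance, where $\alpha = (\gamma^*)'(u)$. The target asymptotic
\[
\EXP_\Phi\bigl(1_{y_n - \alpha n \in J}\, H(\tau^n \tomega)\bigr) \approx \frac{e^{-\gamma(\alpha) n}}{\sqrt n}\, h_\alpha(\Phi)\, \nu_\alpha(H) \int_J e^{-\gamma'(\alpha) t}\, \gamma'(\alpha)\, dt
\]
then reduces, after the change of measure and pulling the factor $e^{-u(y_n - \alpha n)}$ out of the indicator, to a local CLT for $y_n - \alpha n$ under $\tProb^{(u)}$ paired with a test function of the shifted environment. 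Standard Fourier inversion delivers this once one controls $\cL_{u+iv}^n$ for $v$ in a neighbourhood of $0$ (analytic perturbation, giving the Gaussian behaviour), on compacts bounded away from $0$, and for large $|v|$. The large-$|v|$ range is handled by the Lasota--Yorke inequality since $|e^{u a}|$ is bounded; the moderate-$v$ range requires the spectral radius of $\cL_{u+iv}$ to be strictly less than $\lambda(u)$. Part (c) differs only cosmetically: stationarity of $\tProb$ and the bi-variate spectral decomposition make $\hH(\tomega)$ integrate against a left eigenmeasure $\hnu_\alpha$ in the limit.

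The decisive step, which I expect to be the main obstacle, is converting the hypothesis that \eqref{SpCB} has no solution into the spectral gap $|\lambda(u+iv)| < \lambda(u)$ for every $v \ne 0$. Any peripheral eigenfunction $\fh$ of $\cL_{u+iv}$ corresponding to an eigenvalue of modulus $\lambda(u)$ is automatically H\"older (from quasi-compactness) and of constant modulus (from a maximum-principle argument applied to $\cL_u$-invariant densities), and the eigenvalue equation $\cL_{u+iv} \fh = e^{i\bru} \lambda(u) \fh$ rewrites on the natural extension as precisely \eqref{SpCB} for the cocycle $a$. Hence \eqref{SpCB} having no solution forces the peripheral spectrum to reduce to $\{\lambda(u)\}$ with $v = 0$. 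Uniformity of the gap on compact $v$-ranges then follows from upper semicontinuity of the spectral radius under analytic perturbation, and the rest of the proof assembles via now-classical arguments for Markov random walks and products of random transformations.
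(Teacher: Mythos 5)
Your proposal follows essentially the same route as the paper: the twisted operator $P_\kappa h(\Phi)=\EXP_\Phi(e^{\kappa a(\Phi,g)}h(\Phi_1))$, quasi-compactness from the contraction \eqref{EventualContr} (the paper uses Birkhoff cone contraction for real $\kappa$ and a Doeblin--Fortet bound for the complex twist), analyticity and strict convexity of $\lambda_\kappa$ with \eqref{NoNest} excluding the coboundary degeneracy, the Legendre transform $\gamma(\alpha)=\alpha\kappa-\lambda_\kappa$ giving \eqref{InfDer}, Chernoff for part (a), and a tilted local limit theorem for (b)--(c) whose aperiodicity is exactly the non-solvability of \eqref{SpCB} via the constant-modulus peripheral eigenfunction argument. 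The outline is correct and matches the paper's proof in Sections C.3--C.4.
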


Let $\alpha_0=\arg\min\frac{\gamma(\alpha)}{\alpha}$ and $s=\frac{\gamma(\alpha_0)}{\alpha_0}.$
Note that by \eqref{InfDer} the minimum is achieved strictly inside $(0, \alpha^*).$

Theorem \ref{ThLD} is proven in subsection \ref{SSLD}.
Here we use this theorem to obtain Theorem \ref{ThRenContr}.

\begin{lemma}
\label{LmMax}
There exist constants $C$ and $\ts>0$ such that

(a) for each $\Phi$ we have $\Prob_\Phi (\min y_{n}\geq Y)\leq C e^{-sY}; $

(b) For each $n$ we have $\Prob_\Phi(\exists l>k: y_{n}\geq Y$ and $y_{n+l}\geq Y)\leq \frac{C}{\sqrt{Y}} e^{-(sY+\ts k)}. $
In particular,
$\Prob_\Phi(\exists n_1, n_2: n_2>n_1+k $ and $y_{n_j}\geq Y$ for $j=1,2)\leq C e^{-(sY+\ts k)}. $
\end{lemma}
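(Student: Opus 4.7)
Assuming the $\min$ in (a) is a typographical slip for $\sup_n$ (otherwise the bound is vacuous, since $y_0=0$), the plan is to deduce both parts from the local large deviation estimate of Theorem \ref{ThLD}(b) by a Laplace-type summation over $n$. The key identity behind everything is
$$ n\gamma(Y/n)\;=\;Y\cdot\frac{\gamma(Y/n)}{Y/n}, $$
so that by the very definition $s=\min_\alpha\gamma(\alpha)/\alpha$ attained at $\alpha_0$, one has $n\gamma(Y/n)\ge sY$ with equality exactly at $n=n_*:=Y/\alpha_0$, and $n\gamma(Y/n)-sY$ is quadratic in $n-n_*$ with coefficient of order $1/n_*$ by the strict convexity of $\gamma(\alpha)/\alpha$.

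For part (a) I would write $\Prob_\Phi(\sup_n y_n\ge Y)\le\sum_n\Prob_\Phi(y_n\ge Y)$ and bound each term by Theorem \ref{ThLD}(b) with $J=[0,\infty)$, obtaining $\Prob_\Phi(y_n\ge Y)\le\frac{C}{\sqrt n}e^{-n\gamma(Y/n)}$ in the range $Y/n\in(\tEXP(a),\alpha^*)$; the small-$n$ range $Y/n>\alpha^*$ is handled by Theorem \ref{ThLD}(a), which gives super-exponential decay and contributes negligibly. The Laplace sum around $n=n_*$ has effective width $O(\sqrt Y)$, which exactly cancels the $1/\sqrt{n_*}=O(1/\sqrt Y)$ prefactor, yielding $\sum_n\Prob_\Phi(y_n\ge Y)\le Ce^{-sY}$. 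A by-product of the same estimate, needed in (b), is the uniform one-point bound $\Prob_\Phi(y_n\ge Y)\le\frac{C}{\sqrt Y}e^{-sY}$ valid for every $n$.

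For part (b) I would condition on $(y_n,\Phi_n)$ and use the Markov property: setting $u=y_n-Y\ge0$ and $\Phi'=\Phi_n$,
$$ \Prob_\Phi(\exists l>k:\, y_n\ge Y,\, y_{n+l}\ge Y)\le\sum_{u\ge0}\Prob_\Phi(y_n\in[Y+u,Y+u+1])\cdot\sup_{\Phi'}\Prob_{\Phi'}(\exists l>k:\, y'_l\ge -u). $$
Since $\tEXP(a)<0$ forces $\gamma(0)>0$, Theorem \ref{ThLD}(b) applied at rate $\alpha=-u/l$ (with the linearization $l\gamma(-u/l)\approx l\gamma(0)-\gamma'(0)u$ in the regime $u\ll l$, completed by the one-point bound of (a) in the complementary regime) and summed over $l>k$ gives $\sup_{\Phi'}\Prob_{\Phi'}(\exists l>k:\, y'_l\ge -u)\le Ce^{\gamma'(0)u-\gamma(0)k}$. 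Substituting this, the resulting Laplace integral in $u$ is dominated by $u=0$ because $\gamma'(Y/n)>\gamma'(0)$ on the support (convexity of $\gamma$), producing the bound $\frac{C}{\sqrt Y}e^{-sY-\ts k}$ with $\ts=\gamma(0)$. The "in particular" clause then follows by summing over $n_1$: $\Prob_\Phi(y_{n_1}\ge Y)$ is effectively supported in a window of size $O(\sqrt Y)$ around $n_*$, so the total is $O(\sqrt Y)\cdot\frac{C}{\sqrt Y}e^{-sY-\ts k}=Ce^{-sY-\ts k}$.

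The main obstacle is the Markov-chain dependence: the prefactors $h_\alpha(\Phi)$ and measures $\nu_\alpha$ produced by Theorem \ref{ThLD} depend a priori on the initial state and on $\alpha$, so the bounds above are not literally uniform. Compactness of $M_1$ together with the eventual contraction \eqref{EventualContr} yield uniform (Hölder) bounds on $h_\alpha$ and $\nu_\alpha$ as $\alpha$ ranges over compact subintervals of $(\tEXP(a),\alpha^*)$, absorbing all $\Phi$-dependence into $C$. A secondary technical point is the transition between the linearization regime $u\ll l$ and the range where $u$ is comparable to $l$; this is controlled by falling back on the one-point bound from (a) in the latter range, which provides the extra exponential decay in $u$ needed to close the Laplace estimate.
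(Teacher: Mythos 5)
Your proposal is correct and follows essentially the same route as the paper: part (a) is a union bound over $n$ combined with Theorem \ref{ThLD}(b), the identity $n\gamma(Y/n)=Y\cdot\gamma(Y/n)/(Y/n)\ge sY$ plus a quadratic correction, and a Gaussian/Laplace summation of effective width $O(\sqrt{Y})$ around $n_*=Y/\alpha_0$; part (b) uses the Markov property at time $n$ together with the decay $\Prob_{\brPhi}(y_l\ge 0)\lesssim l^{-1/2}e^{-\gamma(0)l}$ summed over $l>k$, and then the sum over $n$ for the ``in particular'' clause. You are right that the $\min$ should be read as a supremum/existential quantifier, and your explicit bookkeeping of the overshoot $u=y_n-Y$ in part (b) is in fact more careful than the paper's own displayed inequality, which simply inserts $\max_{\brPhi}\Prob_{\brPhi}(y_l>0)$ without addressing the case $y_n>Y$.
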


\begin{proof}
By Theorem \ref{ThLD}
$$ \Prob_\Phi(y_n\geq Y)\leq \frac{C}{\sqrt{n}} \exp\left[-\left(\frac{\gamma(Y/n)}{Y/n}\right)Y\right]$$
\begin{equation}
\label{LDGauss}
\leq\frac{C}{\sqrt{n}} \exp[-sY] \exp\left[-c\left(\frac{Y}{n}-\alpha_0\right)^2 Y\right].
\end{equation}
The main contribution to this sum comes from $n\approx Y/\alpha_0.$ For those $n$
$$ c\left(\frac{Y}{n}-\alpha_0\right)^2 Y \leq \tc\frac{(Y-\alpha_0 n)^2}{Y}. $$
Since
$$ \sum_n \frac{1}{\sqrt{Y}} \exp\left[-\tc\frac{(Y-\alpha_0 n)^2}{Y}\right]\leq C $$
proving part (a).

To prove (b) observe that by Markov property
$$\Prob_\Phi(y_{n}\geq Y \text{ and }y_{n+l}\geq Y)\leq \frac{C}{\sqrt{Y}} e^{-(sY+\ts k)}\leq
\Prob_\Phi(y_n\geq Y) \max_{\brPhi} \Prob_\brPhi(y_l>0) .$$
The second term is less than $\frac{C}{\sqrt{l}} e^{-\gamma(0) l}$ due to Theorem \ref{ThLD} while the first term is
less than $\frac{C}{\sqrt{Y}} e^{-sY}$ by part (a). Now the first inequality of part (b) follows by summation over
$l>k$ and the second one follows by summation over $n$ and $l.$
\end{proof}

\begin{lemma}
\label{Lm1stVisit}
Suppose that $n, Y\to\infty$ so that $\frac{n-\frac{Y}{\alpha_0}}{\sqrt{Y}}\to\beta$
Denote
$$\Omega_n=\{y_{n}\geq Y, y_{m}<Y \text{ for all } 0<m<n\}.$$
Then for each $\Phi$  and a continuous function $\hH: \reals\times \tOmega\to \reals$ the following limits exist
$$(a)\quad \lim_{n\to\infty} \Prob_\Phi (\Omega_n) \sqrt{Y} e^{sY};\Quad$$
$$(b)\quad \lim_{n\to\infty} \EXP_\Phi (1_{\Omega_n} \hH(y_n-Y, \tau^n \tomega ) \sqrt{Y} e^{sY}.\Quad$$
Moreover both limits are bounded by $\Const e^{-c\beta^2}.$
\end{lemma}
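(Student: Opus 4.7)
Part (a) is the $\hH\equiv 1$ specialization of (b), so I would focus on (b). The argument would rest on two already-established ingredients: Theorem \ref{ThLD}(b)--(c) for the joint local large-deviation asymptotics, and Lemma \ref{LmMax}(b) for the two-exceedance control.

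The upper bound is the easy half. Since $\Omega_n\subset\{y_n\geq Y\}$, inequality \eqref{LDGauss} from the proof of Lemma \ref{LmMax} yields $\sqrt{Y}\,e^{sY}\,\Prob_\Phi(\Omega_n)\leq C\sqrt{Y/n}\exp[-c(Y/n-\alpha_0)^2 Y]$. Under the scaling $(n-Y/\alpha_0)/\sqrt{Y}\to\beta$ one has $(Y/n-\alpha_0)^2 Y\to\alpha_0^4\beta^2$ and $\sqrt{Y/n}\to\sqrt{\alpha_0}$, which delivers the advertised $\Const\cdot e^{-c'\beta^2}$ bound.

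For existence of the limit I would decompose
$$\EXP_\Phi(1_{\Omega_n}\hH)=\EXP_\Phi(1_{y_n\geq Y}\hH)-\sum_{m=1}^{n-1}\EXP_\Phi(1_{\tau_Y=m,\,y_n\geq Y}\hH),$$
where $\tau_Y=\inf\{j\geq 1:y_j\geq Y\}$. The first (unrestricted) term is furnished by Theorem \ref{ThLD}(b) with $\alpha=Y/n$. The tangent-from-origin identity $\gamma(\alpha_0)=\alpha_0\gamma'(\alpha_0)=\alpha_0 s$ forces the linear Taylor term to vanish, leaving $\gamma(Y/n)\,n=sY+\tfrac12\gamma''(\alpha_0)\alpha_0^3\beta^2+o(1)$ and $\gamma'(Y/n)\to s$, so after multiplying by $\sqrt{Y}e^{sY}$ the first term converges to an explicit limit of the form $\Const(\Phi,\hH)\,e^{-c\beta^2}\int_J e^{-su}s\,du$. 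In the subtracted sum I would split at $k_Y=K\log Y$: Lemma \ref{LmMax}(b) with $k=k_Y$ bounds the contribution from $m\leq n-k_Y$ by $Ce^{-sY-\ts k_Y}=o(e^{-sY}/\sqrt{Y})$ for $K$ large. The remaining boundary sum over $m\in(n-k_Y,n)$ is then factored via the strong Markov property at $\tau_Y$ into a first-passage piece depending on the joint law of $(y_{\tau_Y}-Y,\Phi_{\tau_Y})$ and a residual free evolution of length $<k_Y$ whose limit is provided by Theorem \ref{ThLD}(c).

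The main obstacle is the Markov renewal piece: showing that, after tilting by $s$ (under which the walk gains positive drift $\alpha_0$), the joint law of $(y_{\tau_Y}-Y,\Phi_{\tau_Y})$ converges as $Y\to\infty$ to a stationary overshoot-state measure. The contraction hypothesis \eqref{EventualContr} together with the non-arithmeticity assumption \eqref{SpCB} supply the spectral gap of the tilted transfer operator and rule out lattice overshoot---the same ingredients powering Theorem \ref{ThLD}---so the convergence reduces to standard Markov renewal theory on a compact state space. Combining the overshoot limit with the residual LDP yields the boundary-sum limit; subtracting produces the limit of $\sqrt{Y}e^{sY}\EXP_\Phi(1_{\Omega_n}\hH)$ claimed in (b), and the Gaussian envelope $e^{-c\beta^2}$ is preserved throughout.
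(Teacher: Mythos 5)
Your upper bound via \eqref{LDGauss} and your treatment of the far terms $m\leq n-k_Y$ via Lemma \ref{LmMax}(b) are both correct and match the paper's use of those estimates. The problem is the boundary sum $\sum_{m\in(n-k_Y,n)}\EXP_\Phi(1_{\tau_Y=m,\,y_n\geq Y}\hH)$, which is where your argument either becomes circular or rests on an unproved black box. After applying the strong Markov property at $\tau_Y=m$, each term has the form $\EXP_\Phi\bigl(1_{\tau_Y=m}\,g_j(y_m-Y,\Phi_m)\bigr)$ with $j=n-m$, i.e.\ it is exactly an object of the type the lemma is asserting a limit for (same first-passage event, different test function). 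So your decomposition produces a renewal \emph{equation} relating the unknown at time $n$ to the unknowns at times $n-1,\dots,n-k_Y+1$, not a formula; solving it is the whole content of the lemma. The "standard Markov renewal theory" you invoke gives convergence of the overshoot law $(y_{\tau_Y}-Y,\Phi_{\tau_Y})$ \emph{unconditionally in} $\tau_Y$ (summed over all $m$); what you need is the joint \emph{local-in-$m$} law $\Prob_\Phi(\tau_Y=m,\,\cdot)\sim\mathrm{const}\cdot e^{-sY}/\sqrt{Y}$, which is precisely part (a). Separately, Theorem \ref{ThLD}(c) cannot supply the limit of the residual evolution of length $j<k_Y=K\log Y$: that theorem is an asymptotic as the time length tends to infinity proportionally to the level, whereas your residual piece is a short, $O(1)$-probability event whose limit is a fixed finite-dimensional weak-convergence statement, not an LDP.

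The paper avoids all of this by truncating the avoidance constraint rather than decomposing at the first passage: it replaces $\Omega_n$ by $\{y_n\geq Y,\ y_m<Y \text{ for } n-k<m<n\}$ with $k$ \emph{fixed}, notes that the discrepancy is contained in a two-exceedance event with gap $\geq k$ and hence is at most $C\,e^{-\ts k}$ after the $\sqrt{Y}e^{sY}$ normalization (Lemma \ref{LmMax}(b), uniformly in $n,Y$), and then for fixed $k$ factorizes at the deterministic time $n-k$: the first factor is handled by Theorem \ref{ThLD}(b) and the second is the fixed finite-dimensional law of $(y_k,\max_{0<j\leq k}y_j)$ started from $\Phi_{n-k}$, which enters only as a continuous test function. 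Letting $k\to\infty$ at the end gives a Cauchy sequence of limits. If you want to keep your first-passage decomposition you would have to actually set up and solve the renewal equation (or prove the local limit theorem for $(\tau_Y,\,y_{\tau_Y}-Y,\,\Phi_{\tau_Y})$ from scratch); as written, that step is missing.
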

\begin{proof}
(a) By Lemma \ref{LmMax}(b)  it is enough to show that for each $k$ the following limit exists
\begin{equation}
\label{AlmMax}
 \lim_{n\to\infty} \Prob_\Phi (y_{n}\geq Y, y_{m}<Y \text{ for } n-k<m<n) \sqrt{Y}  e^{sY} .
\end{equation}
The limiting expression equals to
$$ \int \Prob_\Phi (y_{n-k} \in [Y-z_1, Y-z_2]) d P^\Phi_k(z_1, z_2) \sqrt{Y}  e^{sY} $$
where $P_k^\Phi$ is the distribution function of the random vector
$ (y_k, \max_{0<j\leq k} y_j) $ for our Markov chain started from $\Phi.$ The integral above is the limit of
Lebesgue-Stiltjes sums where each term has form
$$ \Prob_\Phi (y_{n-k} \in [Y-z_1, Y-z_2]) \Prob_{\Phi_{n-k}}(y_k\in [z_1, z_1+\eps), \max_{0<j\leq k} y_j\in [z_2, z_2+\eps))  \sqrt{Y}  e^{sY}. $$
Since for each $z_1, z_2$ the last probability is a function of $\Phi_{n-k}$ part (a) follows from Theorem \ref{ThLD}.

For part (b) it is sufficient to restrict our attention to a dense set of functions $\hH.$ In particular we can assume that
$\hH$ depends only on finitely many coordinates, that is we need to compute the limit
$$ \lim_{n\to\infty} \EXP_\Phi (1_{y_{n}\geq Y, y_{m}<Y \text{ for } n-k<m<n} \tH(\Phi_{n-k}, g_{n-k} \dots \Phi_{n+k}, g_{n+k})) \sqrt{Y}  e^{sY} .$$
for some $\tH: (M_1\times M_2)^{2k+1}\to\reals$ The analysis of the last limit is the same as the analysis of \eqref{AlmMax}.

 Finally the fact that above limits are $\cO\left(e^{-c\beta^2}\right)$ follows from the estimates in the
proof of Lemma \ref{LmMax} (see \eqref{LDGauss}).
\end{proof}

\begin{proof}
[Proof of Theorem \ref{ThRenContr}]
Let $Y=\ln t.$ Note that $z_n^+-z_{n,k}^+
=\sum_{j=k+1}^\infty \Delta_j e^{y_j}.$
Take small $\eps>0.$ Then if $t$ is sufficiently large, the inequality
$z_n^+-z_{n,k}^+\geq 1$ implies that there is $j>k$ such that
$y_j>(1-\eps)Y-\eps j.$ Note that $-\eps j<(1-\eps) Y-\eps j<\eps j$ provided $\brC$ is large enough.
Hence
$$\Prob_\Phi (y_j>(1-\eps)Y-\eps j)
\frac{C}{\sqrt{j}}  e^{-\brgamma j}$$
where $\brgamma=\min_{[-\eps, \eps]} \gamma.$
Summing over $j$ we get
$$\Prob_\Phi(z_n^+-z_{n,k}^+>1)\leq C(\eps) e^{-\brgamma k}. $$
Since $k\geq \brDelta \ln t$ and $\brDelta$ can be taken sufficiently large
this proves the first inequlity of part (a). The proof of the second inequlity is similar.

To prove part (b) take $M\gg 1.$
We claim that terms with $y_{-n}\leq Y-M$ can be ignored. Indeed for terms with
$Y-M-1<y_n<Y-M$ to make a contribution greater than $e^{-M/2}$ there should be at least $e^{M/2}/C$ such terms.
By Lemma \ref{LmMax} the probability of such an event is
$$\cO\left(\exp-[s(Y-M)+\ts \exp(M/2)]\right) $$
which establishes our claim. Therefore for large $M$ and $l$ we can approximate
$\Prob_\Phi(z_{n,k}^+\geq t) t^s $ and $\Prob_\Phi(z_{n}^+\geq t) t^s $ by
$\sum_n \Prob_\Phi (1_{\Omega_{n,M}} 1_{B_{n,l, \eps}}) t^s$
where
$$\Omega_{n,M}=\{y_{-n}\geq Y-M, y_{-m}<Y-M \text{ for } 0\leq m<n\}, $$
$$ B_{n,l}=\left\{\sum_{|m-n|<l} \Delta_{m} e^{y_{m}} \geq t-\eps \right\}
. $$
The fact that the last
$\sum_n \Prob_\Phi (1_{\Omega_{n,M}} 1_{B_{n,l, \eps}}) t^s$
approaches the limit as $t\to\infty$ follows from Lemma \ref{Lm1stVisit}(b).
This proves part (b). The proof of part (c) is similar.
\end{proof}

\subsection{Large deviations.}
\label{SSLD}
We follow the approach of \cite{W}.

To simplify the notation we assume for the rest of this section that \eqref{EventualContr} holds with $C=1,$
the general case can be reduced to this one by considering our Markov chains only at the times which are multiples of a
sufficiently large $n_0.$

Consider operators $P_\kappa$ given by
$$ P_\kappa(h)(\Phi)=\EXP_\Phi(e^{\kappa a(\Phi, g)} h(\Phi_1)). $$
$P_\kappa$ is a positive operator
preserving the space of $C^\eta$ functions. Moreover it has many invariant cones as we describe below.
Let
$$ \cC_K=\{h\geq 0: \text{ for all }\tPhi, \ttPhi \text{ we have } h(\tPhi)\leq e^{K d^\eta(\tPhi, \ttPhi)} h(\ttPhi) \}. $$
A direct computation (using \eqref{EventualContr} with $C=1$) shows that
$P_\kappa(\cC_K)\subset \cC_\brK$ where $\brK=K \theta^\eta+\kappa \bH$ and
$\bH$ is the Holder constant of $a$ with respect to $\Phi$ variable.   Now \cite{L} shows that if $K$ is so large that
$K>\brK$ then $P_\kappa$ contracts the Hilbert metric on $\cC_K$ and so there exist
positive eigenfunctions
\begin{equation}
\label{PosEigen}
 P_\kappa h_\kappa=e^{\lambda_\kappa} h_\kappa
\end{equation}
in $\cC_K$ and moreover for any two elements $h', h''$ of $\cC_K$ the directions of
$P_\kappa^n h'$  and $P_\kappa^n h''$ converge to each other exponentially fast.
This in turn implies that the rest of the spectrum of $P_\kappa$
is contained in a disc of radius strictly smaller than
$e^{\lambda_\kappa}$. Since $e^{\lambda_\kappa}$ is an isolated eigenvalue of $P_\kappa,$
$\lambda_\kappa$ depends analytically on $\kappa.$

%\begin{remark}
%The fact that the leading eigenvalue of $P_\kappa$ is isolated implies that there is $\eps>0$ such that
%$$ P_\kappa^n h=e^{\lambda_\kappa n} \left(\nu_\kappa(h) h_\kappa+O(e^{-\eps n}\right) . $$
%For the Markov process we are interested in (see Section \ref{tail}) this statement reduces to part (d) of Lemma \ref{Mainlemma}.
%\end{remark}

%Next
%$$ P_\kappa^n(h)(\omega)=\EXP_\Phi(e^{\kappa y_n} h(\Phi_n)) $$
%Since $P^n_\kappa h_\kappa=e^{n \lambda_\kappa} h_\kappa$ it follows
%that
%$$ \Vert P_\kappa^n h \Vert_{C^0} \leq C e^{n\lambda_\kappa} $$
%where $C=\frac{\max h_\xi}{\min h_\xi}.$
%Next since $\Phi_j$ are contractions in the sense of \eqref{EventualContr}
%implies
%that the following Doeblin-Fortet inequality holds
%\begin{equation}
%\label{DF1}
%\Vert P_\kappa^n h \Vert_{C^\eta}\leq e^{n \lambda_\kappa}
%\left[C_1 \Vert h \Vert_{C^0}+C_2 \Vert h \Vert_{C^\eta} \theta^n \right].
%\end{equation}
%Now the Ruelle-Perron-Frobenius Theorem (see e.g. \cite{L})
%implies the following
%$e^{\lambda_\kappa}$ is an isolated eigenvalue
%which, therefore, depends analytically on $\xi;$
We need the fact that the map $\kappa\to\lambda_\kappa$ is strictly convex.
To see this we need formulas for derivatives of $\lambda$ with respect to $\kappa.$
To this end let
$\nu_\kappa$ be the eigenvector of the adjoint
operator
$$ \nu_\kappa(P_\kappa h)=e^{\lambda_\kappa} \nu_\kappa(h).$$
Differentiating \eqref{PosEigen} we get
$$ \EXP_\Phi\left(a(\Phi, g) e^{\kappa a(\Phi, g)} h_\kappa(G(\Phi, g))\right)+P_\kappa(h_\kappa')=
\lambda' e^{\lambda_\kappa} h_\kappa+e^{\lambda_\kappa} h_\kappa'.$$
Applying $\nu_\kappa$ to both sides we get
\begin{equation}
\label{DerLambda}
\lambda_\kappa=\frac{\nu_\kappa(\EXP(a(\Phi, g) e^{\kappa a(\Phi, g)} h_\kappa(G(\Phi, g))))}
{\nu_\kappa(e^{\lambda_\kappa} h_\kappa)}.
\end{equation}
Let
$$\ta_\kappa=\kappa a-\lambda_\kappa+\ln h_\kappa-\ln h_\kappa(G(\Phi, g)) . $$
Then
\begin{equation}
\label{EqNormalized}
\EXP_\Phi\left(e^{\ta_\kappa}\right)=1
\end{equation}
so we can consider a Markov chain with generator
$$ \tP_\kappa(h)(\Phi)=\EXP_\Phi\left(e^{\ta_\kappa} h(\Phi_1) \right). $$
Observe that
$$ \tP_\kappa=e^{-\lambda_\kappa} M_\kappa^{-1} P_\kappa M_\kappa $$
where $M_\kappa$ denotes the multiplication by $A_\kappa$
so the eigenvalue of the adjoint operator
(which is the stationary measure for our
Markov process) equals
$$m_\kappa(h)=\nu_\kappa(h h_\kappa).$$
Normalize $m_\kappa$ by the condition $m_\kappa(1)=1.$
Then $m_\kappa$ is the invariant measure for the Markov process with transition operator
$P_\kappa.$ Denoting by $\bm_\kappa$ the corresponding invariant measure on $\tOmega$
we can rewrite \eqref{DerLambda} as
\begin{equation}
\label{DerAver}
\lambda_\kappa'=\bm_\kappa(a).
\end{equation}
Next we compute $\lambda_\kappa''.$ Fix a $\kappa_0$ and let
$$\brP_\kappa h=\EXP_\Phi(e^{\bra_\kappa} h(\Phi_1)) $$
where
$$ \bra_\kappa=\kappa(a-\bm_{\kappa_0}(a))+\ln h_{\kappa_0}-\ln h_{\kappa_0}(G(\Phi, g))-\lambda_{\kappa_0}+
\kappa_0 \bm_{\kappa_0}(a) . $$
Then the leading eigenvalue of $\brP_\kappa$ is
$$ \brlambda_\kappa=\lambda_ \kappa-(\kappa-\kappa_0) \bm_{\kappa_0}(a)-\lambda_{\kappa_0} $$
and so $\brlambda''(\kappa_0)=\lambda''(\kappa_0). $
Let $\brh_\kappa$ be the leading eigenvector of $\brP_\kappa$ and $\brmu_k$ be the leading eigenvalue of
the adjoint operator. Then we have
$$ \brP_\kappa'' h+2 \brP_\kappa' \brh_\kappa'+ P_\kappa \brh_\kappa''=
e^{\brlambda_\kappa} (\brlambda_\kappa')' \brh_\kappa+e^{\brlambda_\kappa} \brlambda_\kappa'' \brh_\kappa+
2 e^{\brlambda_\kappa} \brlambda_\kappa' \brh_\kappa'+e^{\brlambda_\kappa} \brh_\kappa''. $$
Applying $\brnu_\kappa$ to both sides and using that
$$ \brlambda_{\kappa_0}=0, \brlambda_{\kappa_0}'=0, \brh_{\kappa_0}=1, \brnu_{\kappa_0}=m_{\kappa_0} $$
we get
$$ \lambda_{\kappa_0}''=\bm_{\kappa_0}(\ha^2)+2\bm_{\kappa_0} (\ha(\Phi_0, g_0) \brh_{\kappa_0}(\Phi_1)) $$
where $\ha=a-\bm_{\kappa_0}(a). $
Applying the same argument to $P_\kappa^n$, which has the leading eigenvalue $e^{n\lambda_\kappa}$ we get
$$ n\lambda_{\kappa_0}''=\bm_{\kappa_0}
\left(\left(\sum_{j=0}^{n-1}\ha (\Phi_j, g_j) \right)^2\right)+2\bm_{\kappa_0}
\left(\left(\sum_{j=0}^{n-1} \ha(\Phi_j, g_j) \right) \brh_{\kappa_0}(\Phi_n)\right) . $$
Since the Markov process with transition operator $\tP_\kappa$ has a spectral gap the measure $\bm_\kappa$ is ergodic
and hence
$$ \frac{1}{n} \sum_{j=0}^{n-1} \ha(\Phi_j, g_j) \to 0 $$
almost surely. Therefore
\begin{equation}
\label{SecDer}
\lambda_{\kappa_0}''=\lim_{n\to\infty} \frac{1}{n} \bm_{\kappa_0}
\left(\left(\sum_{j=0}^{n-1}\ha (\Phi_j, g_j) \right)^2\right) .
\end{equation}
Since the RHS of the last expression is  positive we conclude that
$\lambda_\kappa$ is convex. We now show following the argument of Theorem 12 of \cite{BG2} that
$\lambda_\kappa$ is actually strictly convex.
Consider the following operator on $C^\eta(M_1\times M_2)$
$$ (\hP_\kappa h)(\Phi_0, g_0)=\tEXP^\kappa_{\Phi_0, g_0} h(\Phi_1, g_1). $$
Denote $\Gamma=(1-\hP_\kappa)^{-1} \ha=\sum_{j=0}^\infty \hP_\kappa^j.$
Then a direct computation shows that the RHS of \eqref{SecDer} equals to
$ \bm_\kappa(\Gamma^2-(\hP_\kappa \Gamma)^2).$ Hence if $\lambda_{\kappa_0}''=0$ then
we have
$$ \bm_\kappa((\hP_{\kappa_0}\Gamma)^2)=\bm_{\kappa_0}(\Gamma^2). $$
Since $\bm_\kappa$ is stationary for $\hP_\kappa$ this implies that
$$ \bm_{\kappa_0}((\hP_{\kappa_0}\Gamma)^2)=\bm_{\kappa_0}(\hP_{\kappa_0}(\Gamma^2)). $$
Now Jensen inequality tells us that
$\Gamma(\Phi, g)$ is actually independent of $g,$ $\Gamma=\Gamma(\Phi).$
Then
$$\ha(\Phi, g)=\Gamma(\Phi)-(\hP_{\kappa_0} \Gamma)(\Phi, g)=
\Gamma(\Phi)-\Gamma(\Phi g) $$
contradicting \eqref{NoNest} (as  well as \eqref{SpCB}).
This proves that $\lambda_\kappa$ is strictly convex.

Let $\alpha^*=\lim_{\kappa\to+\infty} \lambda'_\kappa.$ This limit exists since $\lambda_\kappa'$ is increasing and
is finite since $\lambda_\kappa\leq \kappa ||a||_{C^0}. $
We now prove Theorem \ref{ThLD} with this value of $\alpha^*.$

\begin{proof}
To prove part (a) we
iterate \eqref{EqNormalized} to get
$$ \EXP_\Phi\left(e^{\ty_n}\right)=1 $$
where
$$ \ty_n=\xi y_n-n \lambda_\kappa+\ln h(\Phi_0)-\ln h(\Phi_n). $$
Hence by Markov inequality
$$ \Prob_\Phi(y_n>n\alpha)\leq C e^{-n(\kappa \alpha-\lambda_\kappa)}\leq
C e^{n\kappa(\alpha^*-\alpha)} $$
where the last inequality uses that $\lambda_\kappa< \kappa \alpha^*.$ This proves part (a).

To prove part (b) suppose that $\kappa$ is such that $m_\kappa(a)=\alpha.$
Let $\tEXP_\Phi^\kappa$ denote the expectation with respect to the Markov
process with
generator $\tP_\kappa.$
\begin{lemma}
\label{LmLLT}
If \eqref{SpCB} has no solutions then there exists a function $\phi(\Phi)$ such that
\begin{equation}
\label{EqLLT}
\tEXP_\Phi^\kappa\left(1_{y_n-n\alpha\in I} H(\tau^n \tomega)\right)\sqrt{n}
\to \Leb(I) \phi(\Phi) \nu(H).
\end{equation}
\end{lemma}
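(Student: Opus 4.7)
The plan is to prove a local limit theorem for the Birkhoff sums $y_n$ along the Markov chain with transition operator $\tP_\kappa$ by the standard Nagaev--Guivarc'h perturbation method for transfer operators. The key observation is that, under the tilted measure $\tEXP^\kappa_\Phi$, the equality $\bm_\kappa(a)=\alpha$ (which follows from \eqref{DerAver} applied to $\lambda_\kappa'=\alpha$) means that $y_n-n\alpha$ is a centered additive functional of the Markov chain $\{\Phi_j\}$, and the non-arithmeticity hypothesis \eqref{SpCB} is exactly what is needed to rule out lattice behaviour.

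First I would introduce the family of twisted transfer operators on $C^\eta(M_1)$
$$ P_{\kappa,u} h(\Phi)=\tEXP_\Phi^\kappa\bigl(e^{iu(a(\Phi,g)-\alpha)} h(\Phi_1)\bigr),\qquad u\in\reals. $$
At $u=0$ this is $\tP_\kappa$, which by the Hilbert-metric argument already used for $P_\kappa$ has leading simple eigenvalue $1$ (with eigenfunction the constant, and with eigenmeasure $m_\kappa$) and a spectral gap on $C^\eta(M_1)$. Since $P_{\kappa,u}$ depends analytically on $u$, analytic perturbation theory produces, for $|u|$ small, a simple leading eigenvalue $\Lambda(u)$ and corresponding eigenprojector $\Pi(u)$ varying analytically in $u$, with
$$ \Lambda(u)=1-\frac{\sigma^2 u^2}{2}+\cO(u^3),\qquad \sigma^2=\lambda_\kappa''>0, $$
where we have used $\bm_\kappa(a-\alpha)=0$ and the formula \eqref{SecDer}.

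Next I would show that the spectral radius of $P_{\kappa,u}$ on $C^\eta(M_1)$ is strictly less than $1$ for every $u\neq 0$. Suppose, for contradiction, that for some $u\neq 0$ the operator $P_{\kappa,u}$ has an eigenvalue of modulus one, say $e^{i\bru}$, with eigenfunction $\fh$. A standard maximum-modulus argument (using that the chain is indecomposable because of the contraction \eqref{EventualContr}) forces $|\fh|$ to be constant, and then the eigenvalue equation rearranges to
$$ e^{iu\,a(\Phi,g)}=e^{i\bru}\frac{\fh(\Phi)}{\fh(G(\Phi,g))}\qquad\tProb\text{-a.s.}, $$
which is precisely \eqref{SpCB}; this contradicts our hypothesis. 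Combined with the compactness arguments of Keller--Liverani (or the Ionescu-Tulcea--Marinescu lemma), this yields a uniform bound $\|P_{\kappa,u}^n\|\leq C\rho^n$ for $|u|\in[\delta,U]$ for any $0<\delta<U<\infty$, with some $\rho=\rho(\delta,U)<1$.

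Finally I would conclude by Fourier inversion. Approximating $H$ in sup norm by cylinder functions $H_N$ depending only on coordinates in $[-N,N]$ and using the Markov property to split the expectation, the problem reduces to estimating
$$ I_n(u):=\tEXP_\Phi^\kappa\bigl(e^{iu(y_n-n\alpha)} \tH(\Phi_n)\bigr)=\bigl(P_{\kappa,u}^n\tH\bigr)(\Phi) $$
for a suitable $\tH\in C^\eta(M_1)$ derived from $H$ and the stationary distribution. Writing
$$ \tEXP_\Phi^\kappa\bigl(1_{y_n-n\alpha\in I}H(\tau^n\tomega)\bigr)=\frac{1}{2\pi}\int_\reals \hat 1_I(-u)\,I_n(u)\,du, $$
the contribution of $|u|\geq \delta$ decays like $\rho^n$ by the previous step, while for $|u|\leq \delta$ the perturbation expansion gives $I_n(u)=\Lambda(u)^n\Pi(u)\tH(\Phi)+\cO(\rho^n)$; a rescaling $u=v/\sqrt n$ and dominated convergence produces the Gaussian density at $0$ times $\Leb(I)$, yielding \eqref{EqLLT} with
$$ \phi(\Phi)=\frac{h_{\kappa,0}(\Phi)}{\sqrt{2\pi\sigma^2}},\qquad \nu(H)=m_\kappa(\tH). $$
The main obstacle is the second step: verifying, from the sole assumption that \eqref{SpCB} has no continuous solution, that no power $P_{\kappa,u}^n$ has spectral radius exactly $1$ off $u=0$; this is the place where the delicate interplay between the non-arithmeticity of $a$ and the smoothing effect of the Markov contraction \eqref{EventualContr} must be exploited.
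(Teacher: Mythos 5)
Your proposal follows essentially the same route as the paper: the Nagaev--Guivarc'h perturbation of the twisted operators $P_{\kappa,u}$, elimination of peripheral spectrum for $u\neq 0$ via the maximum-modulus/contraction argument reducing to \eqref{SpCB}, reduction to cylinder functions, and Fourier inversion split into small and large frequencies. The one imprecision is the final inversion step: $\hat 1_I$ is not integrable, so the formula $\frac{1}{2\pi}\int_\reals \hat 1_I(-u) I_n(u)\,du$ and the tail estimate over $|u|\geq\delta$ do not work as written; the paper fixes this in the standard way by testing against the kernel $\Gamma_\theta$ whose Fourier transform is compactly supported (Durrett, Section 2.5), which confines the integration to $[-M,M]$ where the spectral-radius bound applies.
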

The proof of this Lemma is given in subsection \ref{AppSSLLT}.

Now take $I=[t, t+\eps]$ then the RHS equals $\eps\phi(\Phi) \nu(H)$
while the LHS equals
$$ \sqrt{n} h_\kappa (\Phi) \EXP_\Phi\left(e^{n(\alpha\kappa-\lambda_\kappa)} \frac{H(\tau^n \tomega)}{h_\kappa (\Phi_n)} e^{\kappa t} \right) (1+o_{\eps\to 0}(1)). $$
Dividing $J$ into the segments of length $\eps\ll 1$ we obtain part (b). Part (c) follows from part (b) and the Markov property.
Finally observe that
$$ \gamma(\alpha)=\alpha\kappa-\lambda_\kappa $$
where $\kappa$ satisfies $\lambda_\kappa'=\alpha.$
Thus
$$ \frac{\partial\gamma}{\partial\alpha}=\left(\alpha-\lambda_\kappa'\right)\frac{\partial\kappa}{\partial\alpha}+\kappa=\kappa.
$$
This proves \eqref{InfDer}.
\end{proof}

\subsection{Local Limit Theorem.}
\label{AppSSLLT}
Consider
$$ \tP_{\kappa, u}(h)(\Phi)=\EXP_\Phi\left(e^{\ta_\kappa-iu(a_\kappa-\alpha)}
h(\Phi_1)\right).$$
Then
$$ \tP^n_{\kappa, u}(h)(\Phi)=\EXP_\Phi\left(\exp\left[\sum_{j=0}^{n-1} {\ta_\kappa-iu(a_\kappa-\alpha)}(\Phi_j, g_j)\right]
h(\Phi_n)\right).$$
Since
$$ \left|\sum_{j=0}^{n-1} \left[(\ta_\kappa-iu(a_\kappa-\alpha))(\tPhi_j, g_j)-(\ta_\kappa-iu(a_\kappa-\alpha))(\ttPhi_j, g_j)\right]\right|
\leq C \sum_{j=0}^{n-1} d(\tPhi_j, \ttPhi_j) \leq \tC d(\tPhi_0, \ttPhi_0) $$
we get
\begin{equation}
\label{DF2}
 \Vert P_{\kappa,u}^n h \Vert_{C^\eta}\leq
\left[\Vert h \Vert_{C^0}+C(u) \theta^n \Vert h \Vert_{C^\eta}\right].
\end{equation}
Using the theory of Doeblin-Fortet operators (\cite{L}) we conclude that for each $\theta'>\theta$ the spectrum of
$P_{\kappa, u}$ outside the disc of radius $\theta'$ consists of a finite number of
 eigenvalues with absolute values
at most 1. We claim that in fact there are no eigenvalues of absolute value 1.
Indeed let $e^{i\bru}$ be such an
eigenvalue and $\fh$ be the corresponding eigenfunction. Then
\begin{equation}
\label{EigVect}
\EXP_\Phi(e^{\ta_\kappa+iu a(\Phi)} \fh(\Phi_1))=e^{i\bru} \fh(\Phi).
\end{equation}
Let $\Phi^*=\arg\max|\fh|.$ Without loss of generality we can assume that
$|\fh(\Phi^*)|=1.$ Now \eqref{EqNormalized} implies that \eqref{EigVect} is only possible if
$|\fh(G(\Phi^*))|=1$ with probability 1.
Iterating we see that for all $n$
\begin{equation}
\label{ZetaOne1}
 |\fh(G_n\dots G_1 (\Phi^*))|=1.
\end{equation}
We claim that this implies that
\begin{equation}
\label{ZetaOne2}
|\fh(\Phi)|\equiv 1
\end{equation}
on the support of $\tmu.$ Indeed if $|\fh|<1-\eps$ on a
relatively open subset $U$ of $\supp(\tmu)$ then there would exist
$\brPhi \in M_1$ and $n_k\to\infty$ and such that
$G_{n_k} \dots G_1 (\brPhi) \in U$ with positive probability. Since $G_n\dots G_1$ contracts with speed
$\theta^n$ for large $k$  we would have
$|\fh(G_{n_k}\dots G_1 (\Phi^*))|<1-\eps/2$ with positive probability, contradiciting \eqref{ZetaOne1}.
Now \eqref{ZetaOne2} and \eqref{EqNormalized} show that
$$ e^{iu a(\Phi, g)} \fh(G(\Phi, g))=e^{i\bru} \fh(\Phi) $$
which contradicts \eqref{SpCB}.

We are now ready to prove Lemma \ref{LmLLT}. Since the LHS of \eqref{EqLLT} is monotone function of $H$ it suffices
to prove the result for a dense set of functions. In particular we may assume that $H$ depends only on finitely many coordinates
$$H=\tH(\omega_{-k+1}, \dots, \omega_0, \dots \omega_{k-1}) . $$
Then
\begin{equation*}
%\label{LLTContLoc}
 \sqrt{n}\tEXP_\Phi^\kappa\left(1_{y_n-n\alpha\in I} H(\tau^n \tomega)\right)=
\sqrt{n} \int
\tEXP_\Phi^\kappa\left(1_{y_{n-k}-(n-k)\alpha-Z-k\alpha \in I} h(\Phi_{n-k}, Z)\right)
d P_k(Z)
\end{equation*}
where
$h(\Phi, Z)=\EXP_{\Phi_0=\Phi}(\tH(\tau^{-k}\omega)|y_k=Z)$
and $P_k(z)$ is the distribution function of $y_k.$ Observe that for each $Z$ the RHS has the same form as the LHS of
\eqref{EqLLT} except that $n$ is replaced to $n-k$ and $h$ depends only on one coordinate.
Therefore it suffices to prove \eqref{EqLLT} in the case where $H=h(\Phi_0).$

Let $\Gamma_\theta(y)=\frac{1}{\pi} \frac{1-\cos(\delta y)}{\delta y^2} e^{i \theta y}.$ Then
$\hGamma_0(u)=(1-\frac{|u|}{\delta})_+$ and $\hGamma_\theta(u)=\hGamma_0(u+\theta).$
By Section 2.5 of \cite{Du} it suffices to show that for each $\theta, \delta$ we have
\begin{equation}
\label{ConvCutOff}
\sqrt{n} \tEXP_\Phi^\kappa (\Gamma_\theta(y_n-\alpha n) h(\Phi_n))\to \phi(\omega) \nu(h)
\int \Gamma_\theta(y) dy.
\end{equation}
We have the following inversion formula
$$ \tEXP_\Phi^\kappa (\Gamma_\theta(y_n-\alpha n) h(\Phi_n))=\frac{1}{2\pi}
\int_{-M}^M \hGamma_\theta(u) \tP_{\kappa, u}^n(h) du $$
where $M$ is such that $\hGamma_\theta(u)=0$ outside $[-M, M].$

Next, take a small $\eps_0.$ Then for $|u|<\eps_0$ we have the decomposition
$$ \tP_{\kappa, u}(h)=\lambda_{\kappa, u} m_{\kappa, u}(h) h_{\kappa, u}+\cR_{\kappa, u} $$
where
$$ \cR_{\kappa, u}(h_{\kappa, u})=0, \quad m_{\kappa, u}(R_{\kappa, u} h)=0 \text{ and }
\Vert\cR_{\kappa, u}^n\Vert\leq K\brtheta^n \text{ for some }\brtheta<1. $$
It follows that
$$ \sqrt{n} \int_{|u|<\eps_0} \hGamma_\theta(u) \tP_{\kappa, u}^n (h) du=
\frac{\sqrt{n}}{2\pi} \int_{-\eps_0}^{\eps_0} \hGamma_\theta(u) \lambda_{\kappa, u}^n m_{\kappa, u}(h) \
h_{\kappa, u} du +\cO(\brtheta^n). $$
Next, letting $u=\frac{t}{\sqrt n}$ we can rewrite above integral as
\begin{equation}
\label{IntEig}
 \frac{1}{2\pi} \int_{-\eps_0 \sqrt n}^{\eps_0 \sqrt n} \Gamma(t/\sqrt{n}) \lambda_{\kappa, t/\sqrt n}^n
m_{\kappa, t/\sqrt{n}}(h) h_{\kappa, t/\sqrt{n}} dt .
\end{equation}
The computations of the previous section give $\lambda_{\kappa, 0}=1, \lambda'_{\kappa, 0}=0$ so that
$\lambda_{\kappa, t/\sqrt n}^n\to e^{-\frac{\sigma^2 t^2}{2}},$ where $\sigma^2=\lambda_\kappa'',$
and this convergence is dominated,
that is, if $\eps_0$ is small enough and $|t|\leq\eps_0\sqrt{n}$ we have $\lambda_{\kappa, t/\sqrt n}^n\leq e^{-\frac{\sigma^2 t^2}{4}}.$
As $u\to 0$ we have $m_{\kappa, u}\to m_\kappa,$ $h_u\to 1$ so that the integral
\eqref{IntEig} converges
to $\sigma \hGamma_\theta(0) m_\kappa(h). $
On the other hand since for $\eps_0\leq |u|\leq M$ the spectral radius of $\tP_{\kappa, u}$
is strictly less than 1
we have
$$ \frac{\sqrt{n}}{2\pi} \int_{\eps_0<|u|<M} \hGamma_\theta(u) \tP_{\kappa, u}^n (h) du=
\cO(\sqrt{n} \brtheta^n)  $$
and \eqref{ConvCutOff} follows.

\section{Contracting property of the Markov chain~$\psi_n.$}
\label{ScIndBC}
For two stochastic matrices $\psi_{0}'$ and $\psi_{0}''$ define $\psi_{n}'$ and $\psi_{n}''$
for $n\ge 1$ using the second formula in \eqref{EqPsi}. The sequence  $\{P_k, Q_k, R_k\}_{1\le k\leq n}$
used in \eqref{EqPsi} is in both cases the same one.

Our goal is to estimate the norm $||\psi_n''-\psi_n'||$. We shall prove the following
\begin{proposition}\label{propD1}
Assume that condition \eqref{EqC2*}
holds. Then there are constants
$K=K(\eps)>0, \theta=\theta(\eps)<1$ such that
$$ ||\psi_{n}''-\psi_{n}'||\leq K \theta^n ||\psi_0''-\psi_0'||. $$
\end{proposition}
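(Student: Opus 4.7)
The plan is to derive a linear recursion for $D_n := \psi_n''-\psi_n'$ and iterate. Subtracting the defining identity $(I-R_n-Q_n\psi_{n-1})\psi_n = P_n$ for the two sequences and rearranging gives
\begin{equation*}
D_n \;=\; B_n''\,D_{n-1}\,\psi_n',\qquad B_n'' := (I-R_n-Q_n\psi_{n-1}'')^{-1} Q_n.
\end{equation*}
Since $\psi_n'$ and $\psi_n''$ are both stochastic (Lemma~2 of \cite{BG1}), we have $D_n\mathbf{1}=0$, so every row of $D_n$ sums to zero. The whole question thus reduces to iterating the action of right-multiplication by $\psi_n'$ and left-multiplication by $B_n''$ on zero-sum rows.

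The first analytic ingredient is that, by Lemma~4 of \cite{BG1}, condition \eqref{EqC2*} forces $\psi_n'(i,j)\ge\eps$ for all $n\ge 1$. A standard Dobrushin-type estimate shows that right-multiplication by such a $\psi_n'$ strictly contracts the $\ell^1$ norm on zero-sum row vectors: decomposing $\psi_n'(l,k)=\eps+(\psi_n'(l,k)-\eps)$, the $\eps$ part is killed by $\sum_l x_l=0$, and summing over $k$ yields $\|x\psi_n'\|_1 \le (1-m\eps)\|x\|_1$. Applying this row by row to the recursion for $D_n$ and setting $v_n(i):=\|D_n(i,\cdot)\|_1$, we obtain the componentwise vector inequality
\begin{equation*}
v_n \;\le\; (1-m\eps)\,B_n''\,v_{n-1},
\end{equation*}
which iterates to $v_n \le (1-m\eps)^n B_n''\cdots B_1''\,v_0$.

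The main remaining step, and the principal obstacle, is to turn this into a deterministic geometric bound with rate $\theta(\eps)<1$. A useful a priori bound is $\|B_n''\|\le 1/(m\eps^2)$: from \eqref{EqC2*} one shows $(P_n\mathbf{1})_i\ge m\eps^2$ (using $(I-R_n)^{-1}P_n\mathbf{1}\ge m\eps\,\mathbf{1}$ together with $(I-R_n)\mathbf{1}\ge\eps\mathbf{1}$), so that $\|\tilde R_n''\|\le 1-m\eps^2$ and hence $\|(I-\tilde R_n'')^{-1}\|\le 1/(m\eps^2)$. When $(1-m\eps)/(m\eps^2)<1$ this already closes the loop via $\|D_n\|\le [(1-m\eps)/(m\eps^2)]\,\|D_{n-1}\|$. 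For the general case the obvious one-step bound $\|B_n''\|\le 1/(m\eps^2)$ is too crude and a second ingredient is required, for instance a Birkhoff-type projective contraction for the strictly positive matrices $B_n''$ on the positive cone, or a weighted norm adapted to the sequence $\psi_n''$ that exploits the fact that $B_n''$ and $\psi_n'$ are not independent but are both tied through $\psi_{n-1}''$ (so that the simultaneous left and right multiplication $D_{n-1}\mapsto B_n'' D_{n-1}\psi_n'$ is better than the product of the individual bounds). Combining such a projective contraction with the $(1-m\eps)^n$ factor from the $\psi_n'$'s gives the uniform rate $\theta(\eps)<1$ claimed in the proposition; this projective/weighted-norm step is where the bulk of the technical work lies.
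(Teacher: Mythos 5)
Your algebraic setup is correct and is a genuinely different route from the paper's (which proves Proposition \ref{propD1} by coupling two walkers with boundary conditions $\psi_0'$, $\psi_0''$, using Lemma \ref{Lm1SExit} to control their positions after the last visit to $L_0$ and ellipticity to couple them at each subsequent layer with probability $\geq m\eps$). The recursion $D_n=B_n''D_{n-1}\psi_n'$, the zero-row-sum observation, and the Dobrushin factor $1-m\eps$ from right multiplication by $\psi_n'$ are all fine. But the step you defer --- controlling $B_n''\cdots B_1''$ --- is not a technicality; it is exactly where your approach breaks, and the bound you actually derive, $v_n\le (1-m\eps)^n B_n''\cdots B_1'' v_0$, is provably insufficient. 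Take $m=2$, $R_n=0$, $P_n=2\eps\,\Pi$, $Q_n=(1-2\eps)\Pi$ with $\Pi(i,j)\equiv 1/2$. This satisfies \eqref{EqC2*}, and one computes $B_n''=\frac{1-2\eps}{2\eps}\Pi$, so $(1-m\eps)^n\|B_n''\cdots B_1''\|=\bigl((1-2\eps)^2/(2\eps)\bigr)^n\to\infty$ for small $\eps$, while in fact $D_1=0$ in this example. So the norm of the $B$-product genuinely grows faster than $(1-m\eps)^{-n}$ (it counts expected left excursions, which under \eqref{EqC2*} can be as large as $(1-m\eps)/(m\eps)$ per layer), and the proposition survives only because of cancellation \emph{between} the two factors in $B_n''\cdots B_1''\,D_0\,\psi_1'\cdots\psi_n'$ --- cancellation your row-by-row triangle inequality discards at the first step. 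Neither of your proposed fixes recovers it: Birkhoff's theorem gives \emph{projective} contraction of the positive matrices $B_n''$ on the positive cone, which controls directions but not the norm $\|B_n''\cdots B_1''v\|$, and says nothing about the action on the sign-changing (zero-sum) data that actually carries $D_n$; the ``weighted norm'' suggestion correctly identifies the issue but supplies no argument.

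Two smaller points. First, your a priori bound $\|B_n''\|\le 1/(m\eps^2)$ rests on the claim $(P_n\mathbf{1})_i\ge m\eps^2$, which is false: \eqref{EqC2*} constrains the exit kernel $(I-R_n)^{-1}P_n$, not the one-step matrix $P_n$, and one can have $(P_n\mathbf{1})_i=0$ for some $i$ (the walker first moves within the layer, then exits right); also $I-R_n$ is not order-preserving, so $v\ge m\eps\mathbf{1}$ does not give $(I-R_n)v\ge m\eps(I-R_n)\mathbf{1}$. A correct uniform bound $\|B_n''\|\le (1-m\eps)/(m\eps)$ does hold, but it does not save the argument for the reason above. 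Second, note that the paper's coupling neutralizes the left-excursion counts precisely by conditioning the walks, after their last visit to $L_0$, on reaching $L_{n+1}$ before returning to $L_0$; only exit positions are compared, layer by layer, which is why the rate $\theta=1-m\eps$ comes out cleanly there. To complete your route you would need to prove directly that the composite map $D\mapsto B_n''\cdots B_1''\,D\,\psi_1'\cdots\psi_n'$ contracts on zero-row-sum matrices, which in substance amounts to redoing the coupling estimate in algebraic form; as written, the proof has a genuine gap at its decisive step.
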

\begin{proof}
Consider two walkers $X'(t)$ and $X''(t)$ starting at the same site $(n,i)$ in the layer
$L_n$ and moving on $[L_0,L_{n+1}]$ with reflecting boundary conditions
$\psi_0'$ and $\psi_0''$ at $L_0$ respectively and absorbing boundary conditions at $L_{n+1}$.

We shall show that there exists a coupling between the walkers such that
\begin{equation}\label{D1}
\mathbb{P}(X'(\tT_{n+1}')\neq X''(\tT_{n+1}''))\leq K \theta^n ||\psi_0''-\psi_0'||,
\end{equation}
where $\tT_{n+1}'$ and $\tT_{n+1}''$ are the hitting times of $L_{n+1}$ for $X'$ and $X''$
respectively. The statement then follows since, according to the probabilistic meaning
of $\psi_n'$ and $\psi_n''$,
$$\psi_n'(i,j)=\mathbb{P}(X'(\tT_{n+1}')=j), \ \ \psi_n''(i,j)=\mathbb{P}(X''(\tT_{n+1}'')=j)$$
and therefore
$$\sum_{j=1}^m|\psi_n'(i,j)-\psi_n''(i,j)|\le 2\mathbb{P}(X'(\tT_{n+1}')\neq X''(\tT_{n+1}'')).$$

The coupling is constructed as follows.

1. The walkers walk together until they either reach $L_0$ for the first time or reach $L_{n+1}$
without visiting $L_0$ (after that they stop). Note that the trajectories which miss $L_0$ do not
contribute to the left hand side of \eqref{D1}.

2. Let $t_0$ be the first time the walkers reach $L_0$ and $t_1'-1$, $t_1''-1$ be the last
time $X'$ and $X''$ reach $L_0$ before reaching $L_{n+1}.$
Between $t_0$ and $t_1'$ and between $t_0$ and $t_1''$ the walkers move independently.

%3. $X'$ chooses its trajectory $Traj(X'):=(X'(t))_{ t_1\le t\le T_{n+1}'}$ (according to the
%conditional distribution distribution defined above).

%4. $X''$ waits until $X'$ reaches $L_{n+1}$ and then starts moving independently of $X'$
%until it meets a point belonging to trajectory while
Note that after time $t_1$ the walkers again move in the same environment
$\{P_k, Q_k, R_k\}_{1\le k\leq n}$ but conditioned on reaching $L_{n+1}$ before $L_0.$

3. If $X'(t_1(X'))=X''(t_1(X'')$ then they move together until they reach $L_{n+1}$.

4. Denote by $t_l',\ t_l''$ the hitting times of $L_l$ for $X'$ and $X''$, where $l\ge 2$.
If $X'(t_1(X'))\neq X''(t_1(X''))$ then they continue to move independently until
they reach $L_k$ with the minimal $k$ such that $X'(t_k')=X''(t_k'')$.

Note that the only trajectories that contribute to the left hand side of \eqref{D1}
are those for which $X'(t_k')\neq X''(t_k'')$ for all $k\in[2,n+1].$

We shall use on the following estimate
\begin{lemma}
\label{Lm1SExit} Let $X'(t_1')=(1, J'),$ $X''(t_1'')=(1, J'').$
There exists a constant $C$ such that
$$ d(J', J'')\leq C ||\psi_0''-\psi_0'||$$
where $d$ denotes the variational distance between the corresponding
distributions.
\end{lemma}
%Let us explain how Lemma \ref{Lm1SExit} allows us to finish the proof of the Proposition.
If at step $k$ of the procedure described above the walkers are
uncoupled then condition \eqref{EqC2*} guarantees that the probability
that they become coupled at $L_{k+1}$ is at least $\eps m.$
Thus the probability that the walkers were uncoupled at time $t_1$ and remain
uncoupled until step $n+1$ is less than
$$ C d(J', J'') \theta^n$$
and the result follows from Lemma \ref{Lm1SExit}.
\end{proof}

\begin{proof}[Proof of Lemma \ref{Lm1SExit}]
Denote
$$B(j, k)=\mathbb{P}(X(t_1)=(1,k)|X(t_1 +1)=(1,j)),
\quad a(j)=\mathbb{P}(\tT_{n+1} < \tT_1|X(t_0+1)=(1,j)), $$
$$\Gamma(j,k)=\bP(X \text{ returns to (1,k) after visiting } L_2
\text{ but before } \tT_{n+1}|X(\brt+1)=(1,j))$$
Also note that due to ellipticity we have
$$ \min_j a_j\geq \eps \max_j a_j. $$
Let $M$ be the matrix with $M(j,k)=a(j)$. %on diagonal and zeroes elsewhere.

With this notation our goal is to establish Lipshitz dependence of
$B$ on the boundary condition $\psi.$ We have
$$ B=M+Q \psi B+R B+\Gamma B $$
that is
$$ B=(I-Q\psi-R-\Gamma)^{-1} M. $$
Therefore if $B'$ and $B''$ correspond to different boundary conditions
$\psi'$ and $\psi''$ respectively then
$$ B'-B''= (I-Q\psi'-R-\Gamma)^{-1}
Q(\psi'-\psi'') (I-Q\psi''-R-\Gamma)^{-1} M.$$
The estimate we need is a consequence of two inequalities below.
\begin{equation}
\label{1StepIE1}
|| (\psi'-\psi'') (I-Q\psi''-R-\Gamma)^{-1} ||\leq C ||\psi'-\psi''||
\end{equation}
\begin{equation}
\label{1StepIE2}
|| (I-Q\psi'-R-\Gamma)^{-1} ||\leq C ||M||^{-1}
\end{equation}
To prove \eqref{1StepIE1} let $U=Q\psi''+R+\Gamma.$ Given a probability vector
$p$ let $\pi_k'=p\psi' U^k,$ $\pi_k''=p\psi''U^k.$ Due to ellipticity
$$ ||\pi_k'-\pi''_k||\leq C (1-\eps)^k ||\pi_0'-\pi_0''||\leq
\tC (1-\eps)^k ||\psi'-\psi''||$$
proving \eqref{1StepIE1}.

To prove \eqref{1StepIE2} let $U=Q\psi'+R+\Gamma.$ We have
$$ \sum_k U(j,k)=1-a_j.$$
Hence
$$ ||U||=\max_j \sum_k U(j,k)=1-\min_j a_j\leq 1-\eps \max_j a_j=
1-\eps ||M||. $$
Accordingly $||U^k||\leq (1-\eps ||M||)^k $ so that
$||(1-U)^{-1}||\leq (\eps||M||)^{-1}$ proving \eqref{1StepIE2}.
\end{proof}

\end{document}